\long\def\symbolfootnote[#1]#2{\begingroup%
\def\thefootnote{\fnsymbol{footnote}}\footnote[#1]{#2}\endgroup}
\qed\vspace{5pt}}
\newtheoremstyle{lause}
{5pt}
{5pt}
{\slshape}
{\parindent}
{\bfseries}
{.}
{.5em}
{}
\theoremstyle{lause}
\newtheoremstyle{maaritelma}
{5pt}
{5pt}
{\rmfamily}
{\parindent}
{\bfseries}
{.}
{.5em}
{}
\theoremstyle{maaritelma}
\newtheoremstyle{lause}
{5pt}
{5pt}
{\slshape}
{\parindent}
{\bfseries}
{.}
{.5em}
{}
\theoremstyle{lause}
\newtheorem{theorem}{Theorem}[section]
\newtheorem{lemma}[theorem]{Lemma}
\newtheorem{corollary}[theorem]{Corollary}
\newtheoremstyle{maaritelma}
{5pt}
{5pt}
{\rmfamily}
{\parindent}
{\bfseries}
{.}
{.5em}
{}
\theoremstyle{maaritelma}
\newtheorem{definition}[theorem]{Definition}
\newtheorem{example}[theorem]{Example}
\newtheorem{remark}[theorem]{Remark}
\newtheorem{open}[theorem]{Open question}
\numberwithin{equation}{section}
\begin{document}

\thispagestyle{empty}

\begin{center}

{\large{\textbf{Balayage, equilibrium measure, and Deny's principle of positivity of mass for $\alpha$-Green potentials}}}

\vspace{18pt}

\textbf{Natalia Zorii}

\vspace{18pt}

\emph{In memory of Lawrence Zalcman}\vspace{8pt}

\footnotesize{\address{Institute of Mathematics, Academy of Sciences
of Ukraine, Tereshchenkivska~3, 02000, Kyiv, Ukraine\\
natalia.zorii@gmail.com }}

\end{center}

\vspace{12pt}

{\footnotesize{\textbf{Abstract.} In the theory of $g_\alpha$-potentials on a domain $D\subset\mathbb R^n$, $n\geqslant2$, $g_\alpha$ being the $\alpha$-Green kernel associated with the $\alpha$-Riesz kernel $|x-y|^{\alpha-n}$ of order $\alpha\in(0,n)$, $\alpha\leqslant2$, we establish the existence and uniqueness of the $g_\alpha$-balayage $\mu^F$ of a positive Radon measure $\mu$ onto a relatively closed set $F\subset D$, we analyze its alternative characterizations, and we provide necessary and/or sufficient conditions for $\mu^F(D)=\mu(D)$ to hold, given
in terms of the $\alpha$-harmonic measure of suitable Borel subsets of $\overline{\mathbb R^n}$, the one-point compactification of $\mathbb R^n$. As a by-product, we find necessary and/or sufficient conditions for the existence of the $g_\alpha$-equ\-il\-ibrium measure $\gamma_F$, $\gamma_F$ being understood in an extended sense where $\gamma_F(D)$ might be infinite. We also discover quite a surprising version of Deny's principle of positivity of mass for $g_\alpha$-potentials, thereby significantly improving a previous result by Fuglede and Zorii (Ann.\ Acad.\ Sci.\ Fenn.\ Math., 2018). The results thus obtained are sharp, which is illustrated by means of a number of examples. Some open questions are also posed.
}}
\symbolfootnote[0]{\quad 2010 Mathematics Subject Classification: Primary 31C15.}
\symbolfootnote[0]{\quad Key words: $\alpha$-Green balayage; $\alpha$-Green equilibrium measure; $\alpha$-harmonic measure; $\alpha$-thinness of a set at infinity; a refined version of Deny's principle of positivity of mass for $\alpha$-Green potentials.
}

\vspace{6pt}

\markboth{\emph{Natalia Zorii}} {\emph{Balayage,  equilibrium measure, and Deny's principle of positivity of mass for $\alpha$-Green potentials}}

\section{General conventions and preliminaries}\label{sec1'} Fix an (open, connected) domain $D\subset\mathbb R^n$, $n\geqslant2$, which might coincide with the whole $\mathbb R^n$, and a positive (Radon) measure $\mu$ on $\mathbb R^n$ that is concentrated on $D$. (Being {\it concentrated} on a Borel set $B\subset\mathbb R^n$ means that $\mu(B^c)=0$, where $B^c:=\mathbb R^n\setminus B$, or equivalently that $\mu=\mu|_B$, where $\mu|_B$ is the {\it trace} of $\mu$ to $B$.) All such $\mu$ form the convex cone, denoted by $\mathfrak M^+(D)$. If $D=\mathbb R^n$, we simply write $\mathfrak M^+:=\mathfrak M^+(\mathbb R^n)$.

Any element of $\mathfrak M^+(D)$ can obviously be thought of as a positive (Radon) measure on $D$, where $D$ is now treated as a locally compact (Hausdorff) space. For the theory of measures on a locally compact space, we refer to Bourbaki \cite{B2} or Edwards \cite{E2}.

However, not every measure $\xi$ on the locally compact space $D$ can be extended to a measure on $\mathbb R^n$ (unless, of course,  $\xi$ is {\it bounded}, i.e.\ $\xi(D)<\infty$). Namely, $\mathfrak M^+(D)$ consists of all positive measures on the locally compact space $D$ that are {\it extendible} by $0$ outside $D$ to all of $\mathbb R^n$; or equivalently, $\mathfrak M^+(D)$ is the class of all $\nu|_D$, $\nu$ ranging over $\mathfrak M^+$. We use one and the same symbol for $\mu\in\mathfrak M^+(D)$ interpreted either as a measure on the locally compact space $D$, or as the trace $\nu|_D$ of some $\nu\in\mathfrak M^+$ to $D$. The extension of $\mu\in\mathfrak M^+(D)$ by $0$ outside $D$ to $\mathbb R^n$ will be denoted by $\mu$ as well.

Fix $\alpha\in(0,n)$, $\alpha\leqslant2$. It follows from the above that for $\mu\in\mathfrak M^+(D)$, one can apply the theory of potentials with respect to both the $\alpha$-Riesz and $\alpha$-Green kernels, denoted by $\kappa_\alpha$ and $g_\alpha$, respectively. That is, $\kappa_\alpha(x,y):=|x-y|^{\alpha-n}$, $x,y\in\mathbb R^n$, and
\begin{equation}\label{g}g_\alpha(x,y):=g^\alpha_D(x,y):=U_{\kappa_\alpha}^{\varepsilon_x}(y)-
U_{\kappa_\alpha}^{(\varepsilon_x)_{\kappa_\alpha}^Y}(y),\quad
(x,y)\in D\times\mathbb R^n,\end{equation}
where $U_{\kappa_\alpha}^\nu(\cdot):=\int\kappa_\alpha(\cdot,z)\,d\nu(z)$ is the {\it $\kappa_\alpha$-potential} of $\nu\in\mathfrak M^+$, $\varepsilon_x$ is the unit Dirac measure at $x\in D$, and $(\varepsilon_x)_{\kappa_\alpha}^Y$ is the {\it $\kappa_\alpha$-balayage} of $\varepsilon_x$ onto the closed (in $\mathbb R^n$) set $Y:=D^c$, uniquely characterized within $\mathfrak M^+(Y)$ by means of the equality
\begin{equation}\label{gg}U_{\kappa_\alpha}^{(\varepsilon_x)_{\kappa_\alpha}^Y}=U_{\kappa_\alpha}^{\varepsilon_x}\quad\text{quasi-everywhere on $Y$}.\end{equation}
(For more details, see e.g.\ \cite{Fr,FZ,L,R}. Recall that {\it quasi-everywhere} ({\it q.e.}) means that the equality holds on all of $Y$, except for a subset of {\it outer} $\kappa_\alpha$-capacity zero. For the theory of inner and outer $\kappa_\alpha$-capacities, we refer to Landkof \cite[Section~II.2]{L}. It should however be stressed here that all the sets, appearing in the current work, are {\it Borel}, so that their inner and outer $\kappa_\alpha$-cap\-acities coincide \cite[Theorem~2.8]{L}.)

When speaking of $\nu\in\mathfrak M^+$, we always understand that $U_{\kappa_\alpha}^\nu$ is not identically infinite on $\mathbb R^n$, which according to \cite[Section~I.3.7]{L} occurs if and only if
\begin{equation}\label{intf}
\int_{|y|>1}\frac{d\nu(y)}{|y|^{n-\alpha}}<\infty.
\end{equation}
Then, actually, $U_{\kappa_\alpha}^\nu$ is finite q.e.\ on $\mathbb R^n$ \cite[Section~III.1.1]{L}. Note that (\ref{intf}) necessarily holds if $\nu$ is either bounded, or of finite {\it $\kappa_\alpha$-energy} $I_{\kappa_\alpha}(\nu)$ (cf.\ (\ref{Eg}) with $D:=\mathbb R^n$).

This work is to continue the investigation of the theory of $g_\alpha$-pot\-entials, initiated by Riesz \cite{R} and Frostman \cite{Fr} and proceeded further by Fuglede and Zorii \cite{FZ}. As in \cite{Fr,FZ,R}, in the present paper we deal only with analytic aspects of this theory, leaving aside its probabilistic counterpart, cf.\ \cite{BH,Doob}.

Let $\mathcal E^+_{g_\alpha}$ stand for the cone of all positive measures $\xi$ on $D$ of finite {\it $g_\alpha$-energy}:
\begin{equation}\label{Eg}
I_{g_\alpha}(\xi):=\int_{D\times D} g_\alpha(x,y)\,d(\xi\otimes\xi)(x,y)<\infty.
\end{equation}
The main fact discovered in \cite{FZ} (see Theorem~4.11 therein) is that the kernel $g_\alpha$ is {\it perfect}, which means that
the cone $\mathcal E^+_{g_\alpha}$ is complete in the {\it strong} topology, introduced by means of the energy norm $\|\xi\|_{g_\alpha}:=\sqrt{I_{g_\alpha}(\xi)}$, and that the strong topology on $\mathcal E^+_{g_\alpha}$ is finer than the {\it vague} topology on $\mathcal E^+_{g_\alpha}$, the topology of pointwise convergence on the class $C_0(D)$ of all continuous functions $\varphi:D\to\mathbb R$ of compact (in $D$) support.\footnote{For $D=\mathbb R^n$, this goes back to Cartan \cite{Ca1} (if $\alpha=2$) and Deny \cite{D1} (if $\alpha\in(0,2]$). With regard to the concept of a perfect kernel on a locally compact space, see Fuglede \cite[Section~3.3]{F1}.}

Another useful fact discovered in \cite{FZ} (see Theorem~4.6 therein) is that the kernel $g_\alpha$ satisfies {\it the domination principle} in the following quite strong form: for any $\xi\in\mathcal E^+_{g_\alpha}$, any (extendible) $\mu\in\mathfrak M^+(D)$, and any positive $\alpha$-superharmonic function $\psi$ on $\mathbb R^n$ such that $U^\xi_{g_\alpha}\leqslant U^\mu_{g_\alpha}+\psi$ $\xi$-a.e., the same inequality holds true on all of $D$.\footnote{For the concept of {\it $\alpha$-superharmonic} function on $\mathbb R^n$, see \cite[Chapter~I]{L}.} Here $U_{g_\alpha}^\xi(\cdot):=\int g_\alpha(\cdot,y)\,d\xi(y)$ is the {\it $g_\alpha$-potential} of a measure $\xi$ on $D$.

The present work is concerned with the theory of {\it $g_\alpha$-balayage} and some relevant questions, see Sections~\ref{sec1}--\ref{sec-Deny} for the results obtained and Section~\ref{sec-proofs} for their proofs. The analysis performed is mainly based on the above-mentioned results from \cite{FZ}, the theory of $\kappa_\alpha$-balayage \cite{BH,FZ,L,Z-bal,Z-bal2}, and the theory of balayage on a locally compact space with respect to a perfect kernel satisfying suitable maximum principles, the latter being developed in the author's recent papers \cite{Z22}--\cite{Z-arx} and \cite{Z24b}--\cite{Z24}.

Let $c_{\kappa_\alpha}(B)$, resp.\ $c_{g_\alpha}(B)$, denote the {\it capacity} of a Borel set $B\subset\mathbb R^n$, resp.\ $B\subset D$, with respect to the kernel $\kappa_\alpha$, resp.\ $g_\alpha$. (For basic facts of the theory of $g_\alpha$-capacities, we refer to \cite[Section~4.4]{FZ}.) If $B\subset D$, then obviously
\begin{equation}\label{iff}
c_{\kappa_\alpha}(B)=0\iff c_{g_\alpha}(B)=0,
\end{equation}
$U_{\kappa_\alpha}^{(\varepsilon_x)_{\kappa_\alpha}^Y}(y)$, $x\in D$, being bounded when $y$ ranges over a compact subset of $D$.

A measure $\nu\in\mathfrak M^+$ is said to be {\it $c_{\kappa_\alpha}$-absolutely continuous} if for any Borel set $B\subset\mathbb R^n$ with $c_{\kappa_\alpha}(B)=0$, we have $\nu(B)=0$. This certainly occurs if $I_{\kappa_\alpha}(\nu)<\infty$, but not conversely \cite[pp.~134--135]{L}. If $\nu\in\mathfrak M^+(D)$, then, on account of (\ref{iff}), the concept of $c_{\kappa_\alpha}$-absolute continuity coincides with that of $c_{g_\alpha}$-absolute continuity, the latter concept being obtained from the former by replacing $c_{\kappa_\alpha}(\cdot)$ by $c_{g_\alpha}(\cdot)$.

For Borel $B\subset\mathbb R^n$, let $\breve{\mathfrak M}^+(B)$ stand for the subset of $\mathfrak M^+(B)$ consisting of all $c_{\kappa_\alpha}$-ab\-s\-ol\-utely continuous measures. In case $B=\mathbb R^n$, we simply write
$\breve{\mathfrak M}^+:=\breve{\mathfrak M}^+(\mathbb R^n)$.

For any closed $Q\subset\mathbb R^n$, let $\nu^Q_{\kappa_\alpha}$ denote the $\kappa_\alpha$-{\it balayage} of $\nu\in\mathfrak M^+$ onto $Q$, which can be defined e.g.\ as the only measure in $\mathfrak M^+$ satisfying the symmetry relation\footnote{For an alternative characterization of $\nu^Q_{\kappa_\alpha}$, see \cite[Lemma~3.10]{FZ}; compare with Theorem~\ref{th-char} below, dealing with the $g_\alpha$-balayage.}
\begin{equation}\label{sym}
I_{\kappa_\alpha}\bigl(\nu^Q_{\kappa_\alpha},\sigma\bigr)=I_{\kappa_\alpha}\bigl(\nu,\sigma^Q_{\kappa_\alpha}\bigr)
\quad\text{for all $\sigma\in\mathcal E^+_{\kappa_\alpha}$}.
\end{equation}
Here $I_{\kappa_\alpha}(\mu,\nu):=\int\kappa_\alpha(x,y)\,d(\mu\otimes\nu)(x,y)=\int U^\mu_{\kappa_\alpha}(x)\,d\nu(x)$ is the {\it mutual $\kappa_\alpha$-energy} of $\mu,\nu\in\mathfrak M^+$, $\mathcal E^+_{\kappa_\alpha}$ is the convex cone of all $\sigma\in\mathfrak M^+$ with $I_{\kappa_\alpha}(\sigma)<\infty$, and $\sigma^Q_{\kappa_\alpha}$ denotes the only measure in $\mathcal E^+_{\kappa_\alpha}(Q):=\mathcal E^+_{\kappa_\alpha}\cap\mathfrak M^+(Q)$ having the property
\[U_{\kappa_\alpha}^{\sigma^Q_{\kappa_\alpha}}=U_{\kappa_\alpha}^\sigma\quad\text{q.e.\ on $Q$}.\]
Such $\sigma^Q_{\kappa_\alpha}$ does exist, and it can be found as the {\it orthogonal projection} of $\sigma\in\mathcal E^+_{\kappa_\alpha}$ onto the convex cone $\mathcal E^+_{\kappa_\alpha}(Q)$ in the pre-Hilbert space $\mathcal E_{\kappa_\alpha}:=\mathcal E^+_{\kappa_\alpha}-\mathcal E^+_{\kappa_\alpha}$, equipped with the inner product $\langle\nu,\lambda\rangle_{\kappa_\alpha}:=I_{\kappa_\alpha}(\nu,\lambda)$. See \cite{FZ} (Theorem~3.1 and Definition~3.4); for the concept of orthogonal projection, see Edwards \cite[Sections~1.12.3, 1.12.4]{E2}.

\begin{remark}
To justify an application of \cite[Theorem~1.12.3]{E2}, we observe that the cone $\mathcal E^+_{\kappa_\alpha}(Q)$ is {\it strongly complete}. In fact, since the kernel $\kappa_\alpha$ is perfect, $\mathcal E^+_{\kappa_\alpha}$ is strongly complete, and the strong topology on $\mathcal E^+_{\kappa_\alpha}$ is finer than the vague topology. As $\mathcal E^+_{\kappa_\alpha}(Q)$ is vaguely closed \cite[Section~III.2, Proposition~6]{B2}, the claim follows.
\end{remark}

It is also useful to note that the symmetry relation (\ref{sym}) can be extended to arbitrary $\mu,\nu\in\mathfrak M^+$ \cite[Theorem~3.8]{FZ}, i.e.
\begin{equation}\label{sym'}
I_{\kappa_\alpha}\bigl(\mu^Q_{\kappa_\alpha},\nu\bigr)=I_{\kappa_\alpha}\bigl(\mu,\nu^Q_{\kappa_\alpha}\bigr)
\quad\text{for all $\mu,\nu\in\mathfrak M^+$}.
\end{equation}

\section{On the existence and uniqueness of the $g_\alpha$-balayage}\label{sec1}

Fix $F\subset D$, $F\ne D$, that is relatively closed in $D$, and let $F^r$ denote the set of all {\it $\kappa_\alpha$-reg\-ul\-ar} points $x\in F$. By the Wiener type criterion \cite[Theorem~5.2]{L},
\begin{equation*}x\in F^r\iff\sum_{j\in\mathbb N}\,\frac{c_{\kappa_\alpha}(F_j)}{q^{j(n-\alpha)}}=\infty,\end{equation*}
where $q\in(0,1)$ and $F_j:=F\cap\bigl\{y\in\mathbb R^n:\ q^{j+1}<|x-y|\leqslant q^j\bigr\}$.
Alternatively, $x\in F^r$ if and only if $(\varepsilon_x)^{F\cup Y}_{\kappa_\alpha}=\varepsilon_x$ \cite[Section~6]{Z-bal}; or, on account of (\ref{sym'}),
\begin{equation}\label{reg}x\in F^r\iff U^{\nu^{F\cup Y}_{\kappa_\alpha}}_{\kappa_\alpha}(x)=U^\nu_{\kappa_\alpha}(x)\quad\text{for all $\nu\in\mathfrak M^+$}.\end{equation}
The set $F^r$ is Borel measurable \cite[Theorem~5.2]{Z-bal2}, while $F^i:=F\setminus F^r$, the set of all {\it $\kappa_\alpha$-ir\-reg\-ul\-ar} points $x\in F$, is of $\kappa_\alpha$-capacity zero (the Kel\-logg--Ev\-ans type theorem, see \cite[Theorem~6.6]{Z-bal}).

\begin{theorem}\label{bal-ex}
 For any $\mu\in\mathfrak M^+(D)$ such that $\mu|_{F^r}$ is $c_{\kappa_\alpha}$-absolutely continuous,\footnote{If either of $I_{\kappa_\alpha}(\mu)$ or $I_{g_\alpha}(\mu)$ is finite, then the assumption $\mu|_{F^r}\in\breve{\mathfrak M}^+(F^r)$ is superfluous. (Note that any $\mu\in\mathfrak M^+(D)$ of finite $\kappa_\alpha$-energy is also of finite $g_\alpha$-energy, but not the other way around.)} i.e.\ $\mu|_{F^r}\in\breve{\mathfrak M}^+(F^r)$, there exists the only measure $\mu^F_{g_\alpha}\in\breve{\mathfrak M}^+(F)$ having the property
 \begin{equation}\label{bal1'}
 U_{g_\alpha}^{\mu^F_{g_\alpha}}(y)=U_{g_\alpha}^\mu(y)\quad\text{for all $y\in F^r$};
 \end{equation}
 this $\mu^F_{g_\alpha}$ is said to be the $g_\alpha$-balayage of $\mu$ onto $F$.
 Actually,\footnote{For $\mu:=\varepsilon_x$, where $x\in D\setminus F$, cf.\ also Frostman \cite[Section~5]{Fr}.}
 \begin{equation}\label{bal2}
 \mu^F_{g_\alpha}=\mu^{F\cup Y}_{\kappa_\alpha}\bigl|_F.
 \end{equation}
 If moreover $\mu\in\mathcal E^+_{g_\alpha}$, then the same $\mu^F_{g_\alpha}$ can alternatively be found as the only measure in the cone $\mathcal E^+_{g_\alpha}(F):=\mathcal E^+_{g_\alpha}\cap\mathfrak M^+(F)$ such that
 \begin{equation}\label{pr}
 \|\mu-\mu^F_{g_\alpha}\|_{g_\alpha}=\min_{\nu\in\mathcal E^+_{g_\alpha}(F)}\,\|\mu-\nu\|_{g_\alpha}.
 \end{equation}
 \end{theorem}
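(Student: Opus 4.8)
The plan is to reduce the whole statement to the already available theory of $\kappa_\alpha$-balayage via the defining formula \eqref{g}. The first step is the fundamental identity
\begin{equation*}
U^\xi_{g_\alpha}(y)+U^{\xi^Y_{\kappa_\alpha}}_{\kappa_\alpha}(y)=U^\xi_{\kappa_\alpha}(y),\qquad y\in D,
\end{equation*}
valid for every $\xi\in\mathfrak M^+(D)$, obtained by integrating \eqref{g} in $d\xi(x)$, using the symmetry of $g_\alpha$, Tonelli's theorem, and the symmetry relation \eqref{sym'} for $\kappa_\alpha$-balayage onto the closed set $Y=D^c$. One also notes that $F\cup Y$ is closed in $\mathbb R^n$ (since $F$ is relatively closed in $D$), so that $\mu^{F\cup Y}_{\kappa_\alpha}$ is well defined.

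For existence, set $\mu^F_{g_\alpha}:=\mu^{F\cup Y}_{\kappa_\alpha}\big|_F$, which gives \eqref{bal2}. As $\kappa_\alpha$-balayage measures are $c_{\kappa_\alpha}$-absolutely continuous, so is this trace, whence $\mu^F_{g_\alpha}\in\breve{\mathfrak M}^+(F)$; and by the Kellogg--Evans type theorem the polar set $F^i$ is $\mu^{F\cup Y}_{\kappa_\alpha}$-null, so in fact $\mu^F_{g_\alpha}=\mu^{F\cup Y}_{\kappa_\alpha}\big|_{F^r}$. To verify \eqref{bal1'}, I would decompose $\mu^{F\cup Y}_{\kappa_\alpha}=\mu^F_{g_\alpha}+\mu^{F\cup Y}_{\kappa_\alpha}\big|_Y$ and combine the transitivity of $\kappa_\alpha$-balayage, $\mu^Y_{\kappa_\alpha}=(\mu^{F\cup Y}_{\kappa_\alpha})^Y_{\kappa_\alpha}$ (because $Y\subset F\cup Y$), with its idempotency on $Y$, $(\mu^{F\cup Y}_{\kappa_\alpha}|_Y)^Y_{\kappa_\alpha}=\mu^{F\cup Y}_{\kappa_\alpha}|_Y$, to obtain $\mu^Y_{\kappa_\alpha}=(\mu^F_{g_\alpha})^Y_{\kappa_\alpha}+\mu^{F\cup Y}_{\kappa_\alpha}|_Y$. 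Substituting this, together with \eqref{reg} applied to $\nu:=\mu$ (so that $U^{\mu^{F\cup Y}_{\kappa_\alpha}}_{\kappa_\alpha}=U^\mu_{\kappa_\alpha}$ on $F^r$), into the fundamental identity written once for $\mu$ and once for $\mu^F_{g_\alpha}$, the $\mu^{F\cup Y}_{\kappa_\alpha}|_Y$-terms cancel and \eqref{bal1'} follows pointwise on $F^r$. The hypothesis $\mu|_{F^r}\in\breve{\mathfrak M}^+(F^r)$ enters here: it is precisely what upgrades the q.e.\ coincidence of $\kappa_\alpha$-potentials to coincidence at every point of $F^r$, and what guarantees that $\mu^F_{g_\alpha}$ itself --- not a larger measure --- is the $c_{\kappa_\alpha}$-absolutely continuous object meeting \eqref{bal1'}.

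For uniqueness, let $\sigma\in\breve{\mathfrak M}^+(F)$ also satisfy $U^\sigma_{g_\alpha}=U^\mu_{g_\alpha}$ on $F^r$. Since $F^i$ has $\kappa_\alpha$-capacity, hence (by \eqref{iff}) $g_\alpha$-capacity, zero and $\sigma,\mu^F_{g_\alpha}$ are $c_{\kappa_\alpha}$-absolutely continuous, we get $U^\sigma_{g_\alpha}=U^{\mu^F_{g_\alpha}}_{g_\alpha}$ both $\sigma$-a.e.\ and $\mu^F_{g_\alpha}$-a.e.; applying the strong form of the domination principle for $g_\alpha$ (Theorem~4.6 of \cite{FZ}, with $\psi\equiv0$) in both directions yields $U^\sigma_{g_\alpha}=U^{\mu^F_{g_\alpha}}_{g_\alpha}$ on all of $D$, whence $\sigma=\mu^F_{g_\alpha}$. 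The catch is that the quoted domination principle assumes finite $g_\alpha$-energy, which $\mu^F_{g_\alpha}$ need not have; I would get around this either by running the argument on the traces $\sigma|_K$, $\mu^F_{g_\alpha}|_K$ over an exhaustion of $F$ by compact sets $K$ (each carrying a measure of finite $g_\alpha$-energy) and letting $K\uparrow F$, or by invoking the corresponding uniqueness statement for balayage with respect to a perfect kernel obeying the domination principle from \cite{Z22}--\cite{Z-arx}. This passage to the infinite-energy case is, I expect, the main obstacle.

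Finally, let $\mu\in\mathcal E^+_{g_\alpha}$. The cone $\mathcal E^+_{g_\alpha}(F)$ is vaguely closed (since $F$ is relatively closed in $D$; cf.\ \cite[Section~III.2, Proposition~6]{B2}), and, $g_\alpha$ being perfect, $\mathcal E^+_{g_\alpha}$ is strongly complete with the strong topology finer than the vague one; hence $\mathcal E^+_{g_\alpha}(F)$ is strongly complete, so the orthogonal projection $P$ of $\mu$ onto it in the pre-Hilbert space $\mathcal E_{g_\alpha}$ --- that is, the unique minimizer in \eqref{pr} --- exists by \cite[Theorem~1.12.3]{E2}. The variational characterization of the projection onto a convex cone yields $I_{g_\alpha}(\mu-P,\nu-P)\leqslant0$ for all $\nu\in\mathcal E^+_{g_\alpha}(F)$, equivalently $U^P_{g_\alpha}\geqslant U^\mu_{g_\alpha}$ q.e.\ on $F$ together with $U^P_{g_\alpha}=U^\mu_{g_\alpha}$ $P$-a.e.; the domination principle then forces $U^P_{g_\alpha}\leqslant U^\mu_{g_\alpha}$ on $D$, so $U^P_{g_\alpha}=U^\mu_{g_\alpha}$ q.e.\ on $F$ and, using $\mu|_{F^r}\in\breve{\mathfrak M}^+(F^r)$ once more, at every point of $F^r$. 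Thus $P\in\breve{\mathfrak M}^+(F)$ obeys \eqref{bal1'}, so $P=\mu^F_{g_\alpha}$ by the uniqueness already established, which completes the proof.
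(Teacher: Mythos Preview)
Your existence argument is essentially the paper's, reorganized: both set $\mu^F_{g_\alpha}:=\mu^{F\cup Y}_{\kappa_\alpha}|_F$ and verify \eqref{bal1'} by reducing to Riesz potentials via the identity $U^\xi_{g_\alpha}=U^\xi_{\kappa_\alpha}-U^{\xi^Y_{\kappa_\alpha}}_{\kappa_\alpha}$ (Lemma~\ref{l-FZ}). The paper carries out a direct Fubini computation starting from the defining formula \eqref{g}; you package the same cancellation through transitivity $(\mu^{F\cup Y}_{\kappa_\alpha})^Y_{\kappa_\alpha}=\mu^Y_{\kappa_\alpha}$ and idempotency on $Y$. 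These are equivalent bookkeepings.

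The real divergence is in the uniqueness step. The paper does \emph{not} invoke the domination principle here at all: from $U^\nu_{g_\alpha}=U^{\mu^F_{g_\alpha}}_{g_\alpha}$ q.e.\ on $F$ and the $c_{\kappa_\alpha}$-absolute continuity of $\nu+\mu^F_{g_\alpha}$, it reads off $I_{g_\alpha}(\nu-\mu^F_{g_\alpha})=0$ directly and then applies the strict positive definiteness of $g_\alpha$ \cite[Theorem~4.9]{FZ}. With that route, what you flag as ``the main obstacle'' never arises. Your proposed workaround via compact exhaustion, moreover, has a genuine gap: the restriction $\sigma|_K$ of an arbitrary $\sigma\in\breve{\mathfrak M}^+(F)$ to a compact $K\subset F$ need \emph{not} have finite $g_\alpha$-energy (boundedness of a measure does not imply finite energy), so you cannot feed $\sigma|_K$ into \cite[Theorem~4.6]{FZ} as stated. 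If you want to keep the domination route, the clean fix is the paper's Theorem~\ref{th-dom}, which requires only $c_{\kappa_\alpha}$-absolute continuity of the dominated measure, not finite energy; with that version your two-sided application goes through without exhaustion.

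For the finite-energy part you rederive the projection characterization from scratch (perfectness $\Rightarrow$ strong completeness of $\mathcal E^+_{g_\alpha}(F)$, variational inequality, then domination); the paper simply cites \cite[Theorem~4.3]{Z22}, whose content is precisely your argument.
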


\begin{remark}\label{rem1} The statement on the uniqueness in Theorem~\ref{bal-ex} would fail to hold if we dropped our limitation to the $c_{\kappa_\alpha}$-absolutely continuous measures of class $\mathfrak M^+(F)$. Indeed, if $\mu:=\varepsilon_x$, where $x\in F^i$, then there exist, in fact, infinitely many $\nu\in\mathfrak M^+(F)$ meeting (\ref{bal1'}), for so is every  $\nu:=a\varepsilon_x+b(\varepsilon_x)^F_{g_\alpha}$ with $a,b\in[0,1]$ and $a+b=1$, $(\varepsilon_x)^F_{g_\alpha}$ being uniquely determined by means of Theorem~\ref{bal-ex} with $\mu:=\varepsilon_x$.
\end{remark}

$\blacktriangleright$ In what follows, unless explicitly indicated otherwise, $\mu$ and $\mu^F_{g_\alpha}$ are as stated in Theorem~\ref{bal-ex}. This convention will generally not be repeated henceforth.

\begin{corollary}\label{cor0}For any Borel set $e\subset D$,
\begin{equation}\label{MMM}
\mu^F_{g_\alpha}(e)=\int d\mu(x)\int1_e(y)\,d(\varepsilon_x)_{\kappa_\alpha}^{F\cup Y}(y),
\end{equation}
$1_e$ being the indicator function of $e$.
\end{corollary}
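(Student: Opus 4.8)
The plan is to reduce the statement to the corresponding integral representation of the $\kappa_\alpha$-balayage and then to invoke it. Put $Q:=F\cup Y$; this is a closed subset of $\mathbb R^n$, since $\overline F\setminus F=\overline F\cap Y\subset Y$. As $\mu^{F\cup Y}_{\kappa_\alpha}$ is concentrated on $Q$ while $e\subset D$ forces $e\cap Q=e\cap F$, formula (\ref{bal2}) gives
\[\mu^F_{g_\alpha}(e)=(\mu^{F\cup Y}_{\kappa_\alpha}|_F)(e)=\mu^{F\cup Y}_{\kappa_\alpha}(e\cap F)=\mu^Q_{\kappa_\alpha}(e),\]
so it suffices to prove $\mu^Q_{\kappa_\alpha}(e)=\int(\varepsilon_x)^Q_{\kappa_\alpha}(e)\,d\mu(x)$, the right-hand side being precisely the iterated integral on the right of (\ref{MMM}). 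This representation of the $\kappa_\alpha$-balayage as a $\mu$-average of the $\kappa_\alpha$-swept Dirac measures is a standard ingredient of the theory (see the references on $\kappa_\alpha$-balayage cited above, e.g.\ \cite{L,FZ,Z-bal}), and quoting it finishes the proof; for completeness I would also record a self-contained derivation based only on the material already introduced.

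That derivation runs as follows. Set $\lambda(e):=\int(\varepsilon_x)^Q_{\kappa_\alpha}(e)\,d\mu(x)$ for Borel $e\subset\mathbb R^n$. First, $\lambda$ is a Radon measure: the integrand is Borel in $x$ (a standard measurability fact, reducible by a monotone class to $e$ open or compact, cf.\ \cite{L,Z-bal}), and $\lambda$ is locally finite because, given a compact $K$ and a closed ball $B$ with $K$ in its interior, one has $(\varepsilon_x)^Q_{\kappa_\alpha}(K)\leqslant1$ always and, testing $(\varepsilon_x)^Q_{\kappa_\alpha}$ against the $\kappa_\alpha$-equilibrium measure $\gamma_B$ of $B$ and using $U^{(\varepsilon_x)^Q_{\kappa_\alpha}}_{\kappa_\alpha}\leqslant\kappa_\alpha(x,\cdot)$, one gets $(\varepsilon_x)^Q_{\kappa_\alpha}(K)\leqslant2^{n-\alpha}c_{\kappa_\alpha}(B)\,|x|^{\alpha-n}$ once $|x|$ is large, whence $\lambda(K)<\infty$ by the standing hypothesis (\ref{intf}). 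Second, $\lambda$ is concentrated on $Q$ and $c_{\kappa_\alpha}$-absolutely continuous: for $x\in D$ either $x\in F^r$ — where $(\varepsilon_x)^{F\cup Y}_{\kappa_\alpha}=\varepsilon_x$, so that the corresponding part of $\lambda$ equals $\mu|_{F^r}$, which lies in $\breve{\mathfrak M}^+$ by hypothesis — or $x\in D\setminus F^r$, in which case $x$ is not a $\kappa_\alpha$-regular point of $Q$ (as $Q^r\cap D=F^r$) and $(\varepsilon_x)^Q_{\kappa_\alpha}$ is $c_{\kappa_\alpha}$-absolutely continuous by a basic fact of $\kappa_\alpha$-balayage (\cite{Z-bal}); hence $\lambda\in\breve{\mathfrak M}^+(Q)$. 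Third, by Tonelli's theorem $U^\lambda_{\kappa_\alpha}(z)=\int U^{(\varepsilon_x)^Q_{\kappa_\alpha}}_{\kappa_\alpha}(z)\,d\mu(x)$ for every $z$, and on the set $Q^r$ of $\kappa_\alpha$-regular points of $Q$ — which differs from $Q$ by a set of $\kappa_\alpha$-capacity zero by the Kellogg--Evans theorem — the characterization (\ref{reg}), valid with any closed set in place of $F\cup Y$, yields $U^{(\varepsilon_x)^Q_{\kappa_\alpha}}_{\kappa_\alpha}(z)=U^{\varepsilon_x}_{\kappa_\alpha}(z)$ for all $x$ simultaneously, whence $U^\lambda_{\kappa_\alpha}=U^\mu_{\kappa_\alpha}$ q.e.\ on $Q$. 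By the uniqueness of a $c_{\kappa_\alpha}$-absolutely continuous measure on $Q$ whose $\kappa_\alpha$-potential equals $U^\mu_{\kappa_\alpha}$ q.e.\ on $Q$, we conclude $\lambda=\mu^Q_{\kappa_\alpha}$, which completes the argument.

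I expect the only genuinely delicate step to be the measure-theoretic soundness of $\lambda$ — the Borel measurability of $x\mapsto(\varepsilon_x)^Q_{\kappa_\alpha}(e)$ and, above all, the local finiteness $\lambda(K)<\infty$, where the standing integrability condition (\ref{intf}) on $\mu$ is exactly what prevents mass from escaping to infinity under the balayage. The remaining subtlety, handling $\kappa_\alpha$-exceptional sets uniformly in $x$, is resolved cleanly by passing to the regular set $Q^r$ via (\ref{reg}); beyond that, everything is routine bookkeeping on top of Theorem~\ref{bal-ex} and the cited $\kappa_\alpha$-balayage theory.
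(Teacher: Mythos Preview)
Your approach is essentially the paper's: both reduce via (\ref{bal2}) to the integral representation $\mu^{F\cup Y}_{\kappa_\alpha}=\int(\varepsilon_x)^{F\cup Y}_{\kappa_\alpha}\,d\mu(x)$, which the paper invokes as \cite[Theorem~5.1]{Z-bal2} (justified through Bourbaki's framework of scalar essentially integrable, hence $\mu$-adequate, families \cite[Section~V.3]{B2}) rather than \cite{L,FZ,Z-bal}. Your supplementary self-contained derivation via the uniqueness characterization in Theorem~\ref{th-balR} is correct and amounts to an alternative proof of that cited result.
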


\begin{proof}By virtue of (\ref{bal2}),
\begin{equation}\label{MMM'}\mu^F_{g_\alpha}(e)=\int1_e\,d\mu^{F\cup Y}_{\kappa_\alpha}\bigl|_F=\int1_{e\cap F}\,d\mu^{F\cup Y}_{\kappa_\alpha}.\end{equation}
The function $x\mapsto(\varepsilon_x)_{\kappa_\alpha}^{F\cup Y}$, $x\in\mathbb R^n$, being {\it scalar essentially $\mu$-integrable} (see Bourbaki \cite[Section~V.3.1]{B2} for a definition, and Zorii \cite[p.~464]{Z-bal2} for a proof), it is {\it $\mu$-ad\-e\-q\-u\-ate} \cite[Section~V.3, Definition~1 and Proposition~2]{B2}. Therefore, the integral representation for $\mu^{F\cup Y}_{\kappa_\alpha}$ holds \cite[Theorem~5.1]{Z-bal2}, which together with (\ref{MMM'}) implies (\ref{MMM}), by use of \cite[Section~V.3, Proposition~5(b)]{B2}. (With this regard, see also Remark~\ref{rem3} and Open question~\ref{PR} below, related to the integral representation for $g_\alpha$-swept measures.)\end{proof}

\begin{corollary}\label{cor1} We have
\begin{equation}\label{est2'}
U^{\mu^F_{g_\alpha}}_{g_\alpha}\leqslant U^\mu_{g_\alpha}\quad\text{everywhere on $D$},
\end{equation}
whence
\begin{equation}\label{est2}
\mu^F_{g_\alpha}(D)\leqslant\mu(D).
\end{equation}
\end{corollary}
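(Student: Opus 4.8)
The plan is to reduce both assertions to the corresponding facts for the $\kappa_\alpha$-kernel, where balayage is already well understood, by combining the defining relation (\ref{g}) with the identity (\ref{bal2}). First I would record the pointwise formula
\[U^\xi_{g_\alpha}(y)=U^\xi_{\kappa_\alpha}(y)-U^{\xi^Y_{\kappa_\alpha}}_{\kappa_\alpha}(y),\qquad y\in D,\]
valid for every $\xi\in\mathfrak M^+(D)$, where $\xi^Y_{\kappa_\alpha}:=\int(\varepsilon_x)^Y_{\kappa_\alpha}\,d\xi(x)$ is the $\kappa_\alpha$-balayage of $\xi$ onto $Y=D^c$; this is obtained from (\ref{g}) by integrating in $x$ against $\xi$ and applying Tonelli's theorem, together with the integral representation of $\kappa_\alpha$-balayage already used in the proof of Corollary~\ref{cor0}. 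Since $\mu$ need not satisfy the integrability condition (\ref{intf}), to avoid indeterminacies of the form $\infty-\infty$ I would first establish the corollary for $\mu$ replaced by the compactly supported traces $\mu|_{\{|x|\leqslant k\}}$ — for which all the $\kappa_\alpha$-potentials in sight are finite q.e., being dominated by $U^{\mu|_{\{|x|\leqslant k\}}}_{\kappa_\alpha}<\infty$ q.e.\ — and then let $k\to\infty$, invoking monotone convergence for the $g_\alpha$-potentials and the monotonicity of $\kappa_\alpha$-balayage, which via (\ref{bal2}) gives $(\mu|_{\{|x|\leqslant k\}})^F_{g_\alpha}\uparrow\mu^F_{g_\alpha}$.

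For (\ref{est2'}) (in the compactly supported case) I would set $\beta:=\mu^{F\cup Y}_{\kappa_\alpha}$, so that $\mu^F_{g_\alpha}=\beta|_F$ by (\ref{bal2}). Transitivity of $\kappa_\alpha$-balayage applied to the smaller target $Y\subset F\cup Y$ gives $\beta^Y_{\kappa_\alpha}=\mu^Y_{\kappa_\alpha}$, while its additivity together with $(\beta|_Y)^Y_{\kappa_\alpha}=\beta|_Y$ (as $\beta|_Y\in\breve{\mathfrak M}^+(Y)$) yields $(\mu^F_{g_\alpha})^Y_{\kappa_\alpha}=(\beta|_F)^Y_{\kappa_\alpha}=\mu^Y_{\kappa_\alpha}-\beta|_Y$. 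Substituting these into the displayed formula (applied once with $\xi=\mu$ and once with $\xi=\mu^F_{g_\alpha}$) and cancelling the common terms $U^{\mu^Y_{\kappa_\alpha}}_{\kappa_\alpha}$ and $U^{\beta|_Y}_{\kappa_\alpha}$, I expect to reach the clean identity
\[U^\mu_{g_\alpha}-U^{\mu^F_{g_\alpha}}_{g_\alpha}=U^\mu_{\kappa_\alpha}-U^{\mu^{F\cup Y}_{\kappa_\alpha}}_{\kappa_\alpha}\quad\text{on }D,\]
whose right-hand side is $\geqslant0$ everywhere on $\mathbb R^n$ because $\kappa_\alpha$-balayage never increases the $\kappa_\alpha$-potential; this gives (\ref{est2'}). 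As an alternative, (\ref{est2'}) can be deduced from the domination principle quoted above (Theorem~4.6 of \cite{FZ}): since $\mu^F_{g_\alpha}\in\breve{\mathfrak M}^+(F)$ and $F^i$ has $\kappa_\alpha$-capacity zero, $\mu^F_{g_\alpha}$ is concentrated on $F^r$, where $U^{\mu^F_{g_\alpha}}_{g_\alpha}=U^\mu_{g_\alpha}$ by (\ref{bal1'}); hence this equality holds $\mu^F_{g_\alpha}$-a.e., and the domination principle with $\psi\equiv0$ gives (\ref{est2'}) when $\mu^F_{g_\alpha}\in\mathcal E^+_{g_\alpha}$, the general case following by exhausting $\mu^F_{g_\alpha}$ from below by measures of finite $g_\alpha$-energy.

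Finally, (\ref{est2}) is immediate from (\ref{bal2}): $\mu^F_{g_\alpha}(D)=\mu^{F\cup Y}_{\kappa_\alpha}(F)\leqslant\mu^{F\cup Y}_{\kappa_\alpha}(\mathbb R^n)\leqslant\mu(\mathbb R^n)=\mu(D)$, the last inequality being the standard fact (here the hypothesis $\alpha\leqslant2$ is essential) that $\kappa_\alpha$-balayage does not increase the total mass, equivalently $(\varepsilon_x)^Q_{\kappa_\alpha}(\mathbb R^n)\leqslant1$ for every closed $Q\subset\mathbb R^n$ and every $x$; see \cite{L,Z-bal}. I expect the only genuine subtlety to be the bookkeeping with possibly infinite potentials in the reduction to the $\kappa_\alpha$-kernel, i.e.\ the compact-support approximation and the relevant monotone-continuity properties of $\kappa_\alpha$-balayage; once these are in place, everything else is routine manipulation of (\ref{g}) and (\ref{bal2}).
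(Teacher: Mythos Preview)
Your proposal is correct, and your argument for (\ref{est2}) via (\ref{bal2}) and $(\varepsilon_x)^{F\cup Y}_{\kappa_\alpha}(\mathbb R^n)\leqslant1$ is exactly the paper's alternative derivation. For (\ref{est2'}), however, the paper takes a shorter route: it invokes a strengthened domination principle (stated later as Theorem~\ref{th-dom}) that requires only $c_{\kappa_\alpha}$-absolute continuity of the smaller measure, not finite $g_\alpha$-energy. Since $\mu^F_{g_\alpha}\in\breve{\mathfrak M}^+(F)$ and, by (\ref{bal1'}) together with $c_{\kappa_\alpha}(F^i)=0$, the equality $U^{\mu^F_{g_\alpha}}_{g_\alpha}=U^\mu_{g_\alpha}$ holds $\mu^F_{g_\alpha}$-a.e., Theorem~\ref{th-dom} gives (\ref{est2'}) at once with no approximation needed. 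Your ``alternative'' is the same idea, but because you cite only \cite[Theorem~4.6]{FZ} (which requires $\xi\in\mathcal E^+_{g_\alpha}$), you are forced into an exhaustion by finite-energy measures that the paper avoids. The paper then deduces (\ref{est2}) from (\ref{est2'}) by the positivity-of-mass principle for $g_\alpha$-potentials \cite[Theorem~4.13]{FZ}, giving your mass argument only as a second proof.

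Your first approach is a genuinely different and instructive route: the identity $U^\mu_{g_\alpha}-U^{\mu^F_{g_\alpha}}_{g_\alpha}=U^\mu_{\kappa_\alpha}-U^{\mu^{F\cup Y}_{\kappa_\alpha}}_{\kappa_\alpha}$ is correct and reduces the $g_\alpha$-inequality directly to the classical $\kappa_\alpha$-balayage inequality, bypassing the domination principle entirely. One minor point: under the paper's standing convention every $\nu\in\mathfrak M^+$ is assumed to satisfy (\ref{intf}), so $U^\mu_{\kappa_\alpha}$ is already finite q.e.; your compact-support truncation is harmless but not strictly necessary to make sense of the subtractions (the formula in Lemma~\ref{l-FZ} holds as stated).
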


\begin{proof}
In fact, (\ref{est2'}) follows from (\ref{bal1'}) by applying the domination principle as stated in Theorem~\ref{th-dom}, whereas (\ref{est2}) is implied by (\ref{est2'}) by employing the principle of positivity of mass for $g_\alpha$-potentials \cite[Theorem~4.13]{FZ}. Alternatively, (\ref{est2}) is obtained from (\ref{MMM}) with $e:=D$ in view of the inequality $(\varepsilon_x)_{\kappa_\alpha}^{F\cup Y}(\mathbb R^n)\leqslant1$ \cite[Eq.~(3.18)]{FZ}.
\end{proof}

In Theorems~\ref{th-balM1}, \ref{th-balM2}, \ref{th-eq} and Corollary~\ref{cor-balM0}, we provide necessary and/or sufficient conditions for the inequality in (\ref{est2}) to be equality. These are given in terms of the $\alpha$-harmonic measure of suitable Borel subsets of $\overline{\mathbb R^n}$, the one-point compactification of $\mathbb R^n$, and they are shown to be sharp (Examples~\ref{ex1}--\ref{ex2}, \ref{ex4'}, \ref{ex4''}).

\begin{corollary} If $Q\subset F$ is relatively closed in $D$, then\footnote{As in Landkof \cite[p.~264]{L}, this might be referred to as the $g_\alpha$-balayage "{\it with a rest}".}
\begin{equation}\label{restg}
\mu_{g_\alpha}^Q=\bigl(\mu_{g_\alpha}^F\bigr)_{g_\alpha}^Q.
\end{equation}
\end{corollary}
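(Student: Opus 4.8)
The plan is to deduce (\ref{restg}) from the uniqueness assertion of Theorem~\ref{bal-ex}, applied with $Q$ in place of $F$. First I would record that both measures occurring in (\ref{restg}) are well defined. Since $\mu^F_{g_\alpha}\in\breve{\mathfrak M}^+(F)$ is $c_{\kappa_\alpha}$-absolutely continuous, so is its trace to any Borel subset of $D$; in particular $\mu^F_{g_\alpha}|_{Q^r}\in\breve{\mathfrak M}^+(Q^r)$, and hence $(\mu^F_{g_\alpha})^Q_{g_\alpha}$ exists by Theorem~\ref{bal-ex}. For $\mu^Q_{g_\alpha}$ I would use the inclusion $Q^r\subset F^r$: if $x\in Q^r$, then $x\in Q\subset F$, while the Wiener-type sum for $F$ at $x$ dominates the one for $Q$ at $x$ by monotonicity of $c_{\kappa_\alpha}$, whence $x\in F^r$. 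Consequently $\mu|_{Q^r}$ is, up to a $\mu$-negligible set, a trace of the $c_{\kappa_\alpha}$-absolutely continuous measure $\mu|_{F^r}$, so $\mu|_{Q^r}\in\breve{\mathfrak M}^+(Q^r)$ and $\mu^Q_{g_\alpha}$ exists as well.

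Next I would verify that $(\mu^F_{g_\alpha})^Q_{g_\alpha}$ obeys the characterizing relation (\ref{bal1'}) of $\mu^Q_{g_\alpha}$. Applying Theorem~\ref{bal-ex} to the measure $\mu^F_{g_\alpha}$ and the set $Q$ gives
\[U_{g_\alpha}^{(\mu^F_{g_\alpha})^Q_{g_\alpha}}=U_{g_\alpha}^{\mu^F_{g_\alpha}}\quad\text{on }Q^r,\]
whereas (\ref{bal1'}) for $\mu$ gives $U_{g_\alpha}^{\mu^F_{g_\alpha}}=U_{g_\alpha}^\mu$ on $F^r$, hence on $Q^r\subset F^r$. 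Combining the two, $U_{g_\alpha}^{(\mu^F_{g_\alpha})^Q_{g_\alpha}}=U_{g_\alpha}^\mu$ everywhere on $Q^r$. Since moreover $(\mu^F_{g_\alpha})^Q_{g_\alpha}\in\breve{\mathfrak M}^+(Q)$, the uniqueness part of Theorem~\ref{bal-ex} (now applied with $Q$) forces $(\mu^F_{g_\alpha})^Q_{g_\alpha}=\mu^Q_{g_\alpha}$, which is exactly (\ref{restg}).

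An alternative I would keep in reserve is to argue at the level of $\kappa_\alpha$-balayage via (\ref{bal2}): decompose $\mu^{F\cup Y}_{\kappa_\alpha}=\mu^F_{g_\alpha}+\mu^{F\cup Y}_{\kappa_\alpha}|_Y$, sweep both sides onto $Q\cup Y\subset F\cup Y$ using the transitivity of $\kappa_\alpha$-balayage together with the fact that $\mu^{F\cup Y}_{\kappa_\alpha}|_Y$ is $c_{\kappa_\alpha}$-absolutely continuous and already carried by $Q\cup Y$ (so unaffected by this sweeping), and finally restrict to $Q$, noting $Q\cap Y=\emptyset$. In either approach the only point requiring a little care is the bookkeeping with $c_{\kappa_\alpha}$-absolute continuity and the inclusion $Q^r\subset F^r$; once these are settled, (\ref{restg}) follows formally from the characterizations of $g_\alpha$- and $\kappa_\alpha$-balayage already available.
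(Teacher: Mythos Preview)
Your proof is correct and follows essentially the same route as the paper's: both identify $(\mu^F_{g_\alpha})^Q_{g_\alpha}$ with $\mu^Q_{g_\alpha}$ by checking that it lies in $\breve{\mathfrak M}^+(Q)$ and has the right $g_\alpha$-potential on $Q^r$, then invoke the uniqueness in Theorem~\ref{bal-ex}. You are simply more explicit than the paper about the well-definedness checks and the inclusion $Q^r\subset F^r$ (which the paper uses tacitly in the chain $U^{\mu^F_{g_\alpha}}_{g_\alpha}=U^\mu_{g_\alpha}$ on $Q^r$); the phrase ``up to a $\mu$-negligible set'' is unnecessary since $Q^r\subset F^r$ gives $\mu|_{Q^r}=(\mu|_{F^r})|_{Q^r}$ exactly.
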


\begin{proof}By Theorem~\ref{bal-ex}, both $\mu_{g_\alpha}^Q$ and $(\mu_{g_\alpha}^F)_{g_\alpha}^Q$ belong to  $\breve{\mathfrak M}^+(Q)$, and moreover
\[U^{(\mu_{g_\alpha}^F)_{g_\alpha}^Q}_{g_\alpha}=U^{\mu_{g_\alpha}^F}_{g_\alpha}=U^\mu_{g_\alpha}=U^{\mu_{g_\alpha}^Q}_{g_\alpha}
\quad\text{on $Q^r$}.\]
In view of the uniqueness of the $g_\alpha$-swept measure (Theorem~\ref{bal-ex}), (\ref{restg}) follows.
\end{proof}

\begin{corollary}\label{cor-rg}
If $c_{\kappa_\alpha}(Y)=0$, then $\mu^F_{g_\alpha}=\mu^{F\cup Y}_{\kappa_\alpha}$.
\end{corollary}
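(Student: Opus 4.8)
The plan is to deduce the identity from the representation \eqref{bal2} in Theorem~\ref{bal-ex}, which gives $\mu^F_{g_\alpha}=\mu^{F\cup Y}_{\kappa_\alpha}|_F$. Observe first that $F\cup Y$ is closed in $\mathbb R^n$ (as $D$ is open and $F$ is relatively closed in $D$) and that $F\cap Y=\emptyset$, because $F\subset D$ while $Y=D^c$. Hence the asserted equality $\mu^F_{g_\alpha}=\mu^{F\cup Y}_{\kappa_\alpha}$ is equivalent to the single statement that the $\kappa_\alpha$-swept measure puts no mass on $Y$, that is, $\mu^{F\cup Y}_{\kappa_\alpha}(Y)=0$; this is what the whole argument reduces to.

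To get $\mu^{F\cup Y}_{\kappa_\alpha}(Y)=0$ I would show that $\mu^{F\cup Y}_{\kappa_\alpha}$ is $c_{\kappa_\alpha}$-absolutely continuous, which in view of $c_{\kappa_\alpha}(Y)=0$ yields the desired conclusion at once. The absolute continuity is seen as follows: since $F\subset D$ with $D$ open, around each $x\in F$ there is a ball contained in $D$ on which $F$ and $F\cup Y$ coincide, so the Wiener type criterion for $\kappa_\alpha$-regularity gives $F^r=(F\cup Y)^r\cap D$; as $\mu$ is concentrated on $D$, this means $\mu|_{(F\cup Y)^r}=\mu|_{F^r}$, which is $c_{\kappa_\alpha}$-absolutely continuous by the standing hypothesis of Theorem~\ref{bal-ex}. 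It then remains to invoke the standard fact of $\kappa_\alpha$-balayage theory that $\mu^Q_{\kappa_\alpha}$ is $c_{\kappa_\alpha}$-absolutely continuous whenever $\mu|_{Q^r}$ is (see \cite{L,Z-bal,Z-bal2}); in terms of the integral representation $\mu^{F\cup Y}_{\kappa_\alpha}=\int(\varepsilon_x)^{F\cup Y}_{\kappa_\alpha}\,d\mu(x)$ from the proof of Corollary~\ref{cor0}, it rests on $(\varepsilon_x)^{F\cup Y}_{\kappa_\alpha}=\varepsilon_x$ for $x\in(F\cup Y)^r$ together with the fact that a unit mass placed off $(F\cup Y)^r$ is dispersed by sweeping into a $c_{\kappa_\alpha}$-absolutely continuous measure. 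With the reduction of the first paragraph, this proves the corollary.

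As an alternative to the use of \eqref{bal2}, once $\mu^{F\cup Y}_{\kappa_\alpha}\in\breve{\mathfrak M}^+(F)$ is in hand one may verify the characterizing property \eqref{bal1'} directly: $c_{\kappa_\alpha}(Y)=0$ forces $(\varepsilon_x)^Y_{\kappa_\alpha}=0$ for every $x\in D$ (balayage onto a $\kappa_\alpha$-polar set vanishes), so by \eqref{g} one has $g_\alpha(x,\cdot)=\kappa_\alpha(x,\cdot)$ on $\mathbb R^n$, whence $U^\lambda_{g_\alpha}=U^\lambda_{\kappa_\alpha}$ on $D$ for all $\lambda\in\mathfrak M^+(D)$; combined with \eqref{reg} applied to $\nu:=\mu$ this gives $U^{\mu^{F\cup Y}_{\kappa_\alpha}}_{g_\alpha}=U^\mu_{g_\alpha}$ on $F^r$, and the uniqueness part of Theorem~\ref{bal-ex} identifies $\mu^{F\cup Y}_{\kappa_\alpha}$ with $\mu^F_{g_\alpha}$. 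In either route the single substantive point, and the one to handle with care, is the $c_{\kappa_\alpha}$-absolute continuity of $\mu^{F\cup Y}_{\kappa_\alpha}$: the subtlety is that $\mu$ may itself carry mass on the $\kappa_\alpha$-irregular set $F^i$, which is $\kappa_\alpha$-polar, so that $\mu$ need not be $c_{\kappa_\alpha}$-absolutely continuous and one must rely on $\kappa_\alpha$-balayage dispersing such mass; everything else — closedness of $F\cup Y$, the identity $F^r=(F\cup Y)^r\cap D$, and the final reduction — is routine.
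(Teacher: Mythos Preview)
Your proposal is correct and follows essentially the same route as the paper: use \eqref{bal2} to reduce to $\mu^{F\cup Y}_{\kappa_\alpha}(Y)=0$, and deduce this from the $c_{\kappa_\alpha}$-absolute continuity of $\mu^{F\cup Y}_{\kappa_\alpha}$ (which the paper obtains by a bare citation of Theorem~\ref{th-balR}). Your explicit verification that $\mu|_{(F\cup Y)^r}=\mu|_{F^r}$ via the locality of the Wiener criterion is a detail the paper silently assumes; the alternative route through \eqref{bal1'} is also fine but unnecessary.
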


\begin{proof}
In fact, since $\mu^{F\cup Y}_{\kappa_\alpha}$ is $c_{\kappa_\alpha}$-absolutely continuous (see Theorem~\ref{th-balR} below), $c_{\kappa_\alpha}(Y)=0$ yields $\mu^{F\cup Y}_{\kappa_\alpha}=\mu^{F\cup Y}_{\kappa_\alpha}\bigl|_F$, which combined with (\ref{bal2}) proves the corollary.
\end{proof}

We end this section with Theorem~\ref{th-char}, providing an alternative characterization of $g_\alpha$-swept measures.\footnote{If either $I_{g_\alpha}(\mu)<\infty$, or $\mu$ is bounded and concentrated on $D\setminus F$, see also \cite{Z22}--\cite{Z-arx}, \cite{Z24b}--\cite{Z24} for a number of alternative characterizations of $\mu_{g_\alpha}^F$. Note that those papers are applicable here, for the kernel $g_\alpha$ is perfect and satisfies the complete maximum principle \cite[Theorems~4.6, 4.11]{FZ}.}
We denote
\begin{align}\label{gamma}
\Gamma_{g_\alpha}(\mu,F;D)&:=\bigl\{\theta\in\mathfrak M^+(D):\text{ \ $U_{g_\alpha}^\theta\geqslant U_{g_\alpha}^\mu$ \ q.e.\ on $F$}\bigr\},\\
\notag\breve{\Gamma}_{g_\alpha}(\mu,F;D)&:=\Gamma_{g_\alpha}(\mu,F;D)\cap\breve{\mathfrak M}^+(D).\end{align}

\begin{theorem}\label{th-char}The $g_\alpha$-balayage
$\mu_{g_\alpha}^F$ can equivalently be determined as the only measure in the class $\breve{\Gamma}_{g_\alpha}(\mu,F;D)$ having the property
\begin{equation}\label{eqchar}
U_{g_\alpha}^{\mu_{g_\alpha}^F}=\min_{\theta\in\breve{\Gamma}_{g_\alpha}(\mu,F;D)}\,U_{g_\alpha}^\theta=
\min_{\theta\in\Gamma_{g_\alpha}(\mu,F;D)}\,U_{g_\alpha}^\theta\quad\text{on all of $D$}.
\end{equation}
\end{theorem}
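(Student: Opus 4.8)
The statement bundles three assertions, which the plan is to establish in turn: that $\mu_{g_\alpha}^F\in\breve\Gamma_{g_\alpha}(\mu,F;D)$; that $U_{g_\alpha}^{\mu_{g_\alpha}^F}\leqslant U_{g_\alpha}^\theta$ on all of $D$ for every $\theta\in\Gamma_{g_\alpha}(\mu,F;D)$; and that $\mu_{g_\alpha}^F$ is the \emph{only} element of $\breve\Gamma_{g_\alpha}(\mu,F;D)$ realizing the ensuing minimum. Granting the first two, the chain of equalities in (\ref{eqchar}), with both minima attained at $\mu_{g_\alpha}^F$, is immediate from $\breve\Gamma_{g_\alpha}(\mu,F;D)\subset\Gamma_{g_\alpha}(\mu,F;D)$. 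The first assertion is the easy part: by Theorem~\ref{bal-ex}, $\mu_{g_\alpha}^F\in\breve{\mathfrak M}^+(F)\subset\breve{\mathfrak M}^+(D)$, while (\ref{bal1'}) gives $U_{g_\alpha}^{\mu_{g_\alpha}^F}=U_{g_\alpha}^\mu$ on $F^r$; since $F\setminus F^r=F^i$ has $\kappa_\alpha$-capacity zero by the Kellogg--Evans type theorem, hence $g_\alpha$-capacity zero by (\ref{iff}), this equality, and a fortiori the inequality $U_{g_\alpha}^{\mu_{g_\alpha}^F}\geqslant U_{g_\alpha}^\mu$, holds q.e.\ on $F$.

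The core of the proof is the minimality, and here the plan is to pass to the $\kappa_\alpha$-kernel by means of (\ref{bal2}). Write $\lambda^Y:=\lambda_{\kappa_\alpha}^Y$. Since $Y=D^c$ is closed and every $\lambda\in\mathfrak M^+(D)$ is concentrated on $D$, integrating the defining identity (\ref{g}) against $\lambda$ and invoking the integral representation of $\kappa_\alpha$-balayage (used already in the proof of Corollary~\ref{cor0}) yields, via Tonelli, the additive identity
\[
U_{\kappa_\alpha}^{\lambda}=U_{g_\alpha}^{\lambda}+U_{\kappa_\alpha}^{\lambda^Y}\quad\text{on }D,\ \text{for every }\lambda\in\mathfrak M^+(D),
\]
an equality in $[0,\infty]$. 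Fix $\theta\in\Gamma_{g_\alpha}(\mu,F;D)$. Combining $U_{g_\alpha}^\theta\geqslant U_{g_\alpha}^\mu$ q.e.\ on $F$ with $U_{g_\alpha}^\mu=U_{g_\alpha}^{\mu_{g_\alpha}^F}$ q.e.\ on $F$ (established above) and the additive identity applied to $\theta$ and to $\mu_{g_\alpha}^F$, one obtains $U_{\kappa_\alpha}^P\geqslant U_{\kappa_\alpha}^R$ q.e.\ on $F$ for the positive measures $P:=\theta+(\mu_{g_\alpha}^F)^Y$ and $R:=\mu_{g_\alpha}^F+\theta^Y$; moreover $U_{\kappa_\alpha}^P=U_{\kappa_\alpha}^R$ q.e.\ on $Y$, since $U_{\kappa_\alpha}^{\theta^Y}=U_{\kappa_\alpha}^\theta$ and $U_{\kappa_\alpha}^{(\mu_{g_\alpha}^F)^Y}=U_{\kappa_\alpha}^{\mu_{g_\alpha}^F}$ q.e.\ on $Y$. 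Now $R$ is concentrated on $F\cup Y$ and, being the sum of $\mu_{g_\alpha}^F\in\breve{\mathfrak M}^+(F)$ and the $\kappa_\alpha$-swept measure $\theta^Y$ (which is $c_{\kappa_\alpha}$-absolutely continuous by Theorem~\ref{th-balR}), is itself $c_{\kappa_\alpha}$-absolutely continuous; hence $U_{\kappa_\alpha}^P\geqslant U_{\kappa_\alpha}^R$ holds $R$-a.e., and the domination principle for $\kappa_\alpha$ — which for a $c_{\kappa_\alpha}$-absolutely continuous dominated measure imposes no restriction on the energy — upgrades this to all of $\mathbb R^n$. Restricting to $D$ and expanding $U_{\kappa_\alpha}^P$ and $U_{\kappa_\alpha}^R$ through the additive identity applied to $\theta$ and to $\mu_{g_\alpha}^F$, the common summand $U_{\kappa_\alpha}^{\theta^Y}+U_{\kappa_\alpha}^{(\mu_{g_\alpha}^F)^Y}$ cancels, leaving $U_{g_\alpha}^\theta\geqslant U_{g_\alpha}^{\mu_{g_\alpha}^F}$ on $D$, which is the required minimality.

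For uniqueness, let $\theta_0\in\breve\Gamma_{g_\alpha}(\mu,F;D)$ satisfy $U_{g_\alpha}^{\theta_0}=\min_{\theta\in\breve\Gamma_{g_\alpha}(\mu,F;D)}U_{g_\alpha}^\theta$; by the minimality just proved this forces $U_{g_\alpha}^{\theta_0}=U_{g_\alpha}^{\mu_{g_\alpha}^F}$ on all of $D$. Applying the additive identity to $\theta_0$ and to $\mu_{g_\alpha}^F$ and arguing on $Y$ exactly as above, one finds that the positive, $c_{\kappa_\alpha}$-absolutely continuous measures $\theta_0+(\mu_{g_\alpha}^F)^Y$ and $\mu_{g_\alpha}^F+\theta_0^Y$ have equal $\kappa_\alpha$-potentials q.e.\ on $\mathbb R^n$; by the unicity principle for the $\kappa_\alpha$-kernel these measures coincide. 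Since $(\mu_{g_\alpha}^F)^Y$ and $\theta_0^Y$ are concentrated on $Y=D^c$, restricting the equality to $D$ gives $\theta_0=\mu_{g_\alpha}^F$.

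The step I expect to be the main obstacle is precisely that $\mu_{g_\alpha}^F$ need not lie in $\mathcal E^+_{g_\alpha}$: in the infinite-energy regime neither the Hilbert-space projection behind (\ref{pr}) nor a direct appeal to the $g_\alpha$-domination principle (Theorem~\ref{th-dom}, stated only for a dominated measure of finite $g_\alpha$-energy) is available, so the minimality and uniqueness cannot be obtained inside the $g_\alpha$-theory alone. It is the reduction to the Riesz kernel via (\ref{bal2}), exploiting that the classical domination and unicity principles for $\kappa_\alpha$ persist — without any energy restriction — on the class of $c_{\kappa_\alpha}$-absolutely continuous measures, that circumvents this; one should also take mild care about the finiteness of the auxiliary potentials $U_{\kappa_\alpha}^{\theta^Y}$ and $U_{\kappa_\alpha}^{(\mu_{g_\alpha}^F)^Y}$ when transferring pointwise (in)equalities back to $D$.
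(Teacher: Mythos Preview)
Your argument is correct, but the detour through the Riesz kernel is unnecessary and rests on a misreading: you claim that Theorem~\ref{th-dom} is ``stated only for a dominated measure of finite $g_\alpha$-energy,'' whereas in fact it requires only that the dominated measure be $c_{\kappa_\alpha}$-absolutely continuous --- exactly what $\mu_{g_\alpha}^F$ is. (You are probably thinking of the earlier version \cite[Theorem~4.6]{FZ} quoted in Section~\ref{sec1'}, which does impose a finite-energy hypothesis; Theorem~\ref{th-dom} is the sharpened form.) The paper therefore proceeds in one line: from $U_{g_\alpha}^\theta\geqslant U_{g_\alpha}^{\mu_{g_\alpha}^F}$ q.e.\ on $F$, hence $\mu_{g_\alpha}^F$-a.e., Theorem~\ref{th-dom} yields the inequality on all of $D$; uniqueness then follows immediately from $I_{g_\alpha}(\zeta-\mu_{g_\alpha}^F)=0$ and the strict positive definiteness of $g_\alpha$. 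Your route --- adding the $Y$-balayages to form $P$ and $R$, invoking the $\kappa_\alpha$-domination and unicity principles, and then canceling --- works and has the minor virtue of relying only on classical Riesz tools, but it is considerably longer and depends on the everywhere-finiteness of $U_{\kappa_\alpha}^{\theta^Y}$ and $U_{\kappa_\alpha}^{(\mu_{g_\alpha}^F)^Y}$ on $D$ (true, since these potentials are $\alpha$-harmonic there), a point you flag but do not settle.
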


\begin{corollary}\label{cor-mass} It holds true that
\begin{equation}\label{minm}
\mu_{g_\alpha}^F(D)=\min_{\theta\in\breve{\Gamma}_{g_\alpha}(\mu,F;D)}\,\theta(D)=\min_{\theta\in\Gamma_{g_\alpha}(\mu,F;D)}\,\theta(D).
\end{equation}
\end{corollary}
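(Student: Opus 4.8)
The plan is to derive Corollary~\ref{cor-mass} directly from Theorem~\ref{th-char} together with the principle of positivity of mass for $g_\alpha$-potentials \cite[Theorem~4.13]{FZ}. First I would prove the chain of inequalities
\[
\mu_{g_\alpha}^F(D)\geqslant\min_{\theta\in\breve{\Gamma}_{g_\alpha}(\mu,F;D)}\,\theta(D)\geqslant\min_{\theta\in\Gamma_{g_\alpha}(\mu,F;D)}\,\theta(D),
\]
where the first inequality is trivial because $\mu_{g_\alpha}^F$ itself belongs to $\breve{\Gamma}_{g_\alpha}(\mu,F;D)$ (it meets \eqref{bal1'}, hence a fortiori $U_{g_\alpha}^{\mu_{g_\alpha}^F}\geqslant U_{g_\alpha}^\mu$ q.e.\ on $F$, and it lies in $\breve{\mathfrak M}^+(D)$), and the second holds because $\breve{\Gamma}_{g_\alpha}(\mu,F;D)\subset\Gamma_{g_\alpha}(\mu,F;D)$.

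Next I would establish the reverse inequality $\mu_{g_\alpha}^F(D)\leqslant\theta(D)$ for every $\theta\in\Gamma_{g_\alpha}(\mu,F;D)$, which will give
\[
\mu_{g_\alpha}^F(D)\leqslant\min_{\theta\in\Gamma_{g_\alpha}(\mu,F;D)}\,\theta(D)
\]
and close the loop. Fix such a $\theta$. By \eqref{eqchar} (the second equality there) we have $U_{g_\alpha}^{\mu_{g_\alpha}^F}\leqslant U_{g_\alpha}^\theta$ everywhere on $D$. Now apply the principle of positivity of mass for $g_\alpha$-potentials \cite[Theorem~4.13]{FZ}: a pointwise domination of $g_\alpha$-potentials on $D$ forces the corresponding inequality between total masses, so $\mu_{g_\alpha}^F(D)\leqslant\theta(D)$. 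Combining the two chains of inequalities yields \eqref{minm}, with both minima attained at $\theta=\mu_{g_\alpha}^F$.

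The only point that requires a little care is the legitimacy of invoking the principle of positivity of mass here: one must check that $\mu_{g_\alpha}^F$ and the competitor $\theta$ fall within the scope of \cite[Theorem~4.13]{FZ}. Since $\mu_{g_\alpha}^F\in\breve{\mathfrak M}^+(F)\subset\mathfrak M^+(D)$ and $\theta\in\mathfrak M^+(D)$, and since the perfectness of $g_\alpha$ \cite[Theorem~4.11]{FZ} is exactly what makes that result applicable to arbitrary measures of $\mathfrak M^+(D)$ (not merely those of finite energy), this is immediate; I would simply remark it in passing. Thus the main content of the corollary is already encapsulated in Theorem~\ref{th-char}, and the proof is short. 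The one genuinely substantive ingredient beyond Theorem~\ref{th-char} is the positivity-of-mass principle, and that is the step I expect a reader to want flagged explicitly.
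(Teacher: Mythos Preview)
Your proof is correct and is essentially the same as the paper's: the paper's own argument is the one-liner ``this follows at once from \eqref{eqchar} by applying the principle of positivity of mass for $g_\alpha$-potentials \cite[Theorem~4.13]{FZ},'' and you have simply unpacked that into the two obvious chains of inequalities. One small quibble: the applicability of \cite[Theorem~4.13]{FZ} to arbitrary $\mu,\nu\in\mathfrak M^+(D)$ is a feature of that theorem as stated, not a consequence of perfectness per se, so the parenthetical justification you give is slightly off---but this has no bearing on the argument.
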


\begin{proof}
  This follows at once from (\ref{eqchar}) by applying the principle of positivity of mass for $g_\alpha$-potentials \cite[Theorem~4.13]{FZ}.
\end{proof}

\begin{remark}
However, the extremal property (\ref{minm}) cannot serve as an equivalent definition of the $g_\alpha$-balayage, for it does not determine $\mu_{g_\alpha}^F$ uniquely. Indeed, let $D:=\mathbb R^n$, and let $F\ne\mathbb R^n$ be not $\alpha$-thin at infinity.\footnote{By Kurokawa and Mizuta \cite{KM}, a Borel set $Q\subset\mathbb R^n$ is said to be {\it $\alpha$-thin at infinity} if
\begin{equation*}
 \sum_{j\in\mathbb N}\,\frac{c_{\kappa_\alpha}(Q_j)}{q^{j(n-\alpha)}}<\infty,
 \end{equation*}
where $q\in(1,\infty)$ and $Q_j:=Q\cap\{x\in\mathbb R^n:\ q^j\leqslant|x|<q^{j+1}\}$. See also \cite[Section~2]{Z-bal2}.\label{f-KM}} Then for any $\mu\in\mathcal E^+_{\kappa_\alpha}(F^c)$,
\begin{equation}\label{22}\mu^F_{\kappa_\alpha}\ne\mu\quad\text{and}\quad\mu^F_{\kappa_\alpha}(\mathbb R^n)=\mu(\mathbb R^n),\end{equation}
the latter being obtained from \cite[Corollary~5.3]{Z-bal2} (cf.\ also \cite[Theorem~3.22]{FZ} or \cite[Theorem~4]{Z2}).
Since both $\mu$ and $\mu^F_{\kappa_\alpha}$ belong to $\breve{\Gamma}_{\kappa_\alpha}(\mu,F;\mathbb R^n)$, while this class is convex, combining (\ref{minm}) and (\ref{22}) shows that $\breve{\Gamma}_{\kappa_\alpha}(\mu,F;\mathbb R^n)$ has infinitely many measures meeting (\ref{minm}), for so is every $a\mu+b\mu^F_{\kappa_\alpha}$ with $a,b\in[0,1]$ and $a+b=1$.
\end{remark}

\section{How does $\mu(D)$ vary under the $g_\alpha$-balayage?}\label{how}

\subsection{The case $\alpha<2$} In this subsection, unless explicitly stated otherwise, $\alpha<2$.

\begin{theorem}\label{th-balM1}Under the requirements of Theorem~\ref{bal-ex}, if moreover
\begin{equation}\label{As}
\mu\ne\mu|_{F^r},\quad{\rm Int}_{\mathbb R^n}Y\ne\varnothing,
\end{equation}
then
\begin{equation}\label{Assa}
 \mu^F_{g_\alpha}(D)<\mu(D).
\end{equation}
\end{theorem}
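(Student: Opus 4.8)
The plan is to reduce the strict inequality $\mu^F_{g_\alpha}(D)<\mu(D)$ to a strict loss of mass in a single Dirac sweep, exploiting the integral representation from Corollary~\ref{cor0}. By that corollary with $e:=D$,
\[
\mu^F_{g_\alpha}(D)=\int (\varepsilon_x)_{\kappa_\alpha}^{F\cup Y}(D)\,d\mu(x),
\]
and since always $(\varepsilon_x)_{\kappa_\alpha}^{F\cup Y}(\mathbb R^n)\leqslant1$ by \cite[Eq.~(3.18)]{FZ}, it suffices to produce a subset of $D\setminus F^r$ of strictly positive $\mu$-measure on which
\[
(\varepsilon_x)_{\kappa_\alpha}^{F\cup Y}(D)=(\varepsilon_x)_{\kappa_\alpha}^{F\cup Y}(F)<1.
\]
The first hypothesis in \eqref{As}, namely $\mu\ne\mu|_{F^r}$, exactly guarantees that $\mu\big((D\setminus F^r)\big)>0$; note $D\setminus F^r=(D\setminus F)\cup F^i$, and the Kellogg--Evans type theorem makes $F^i$ negligible for the $c_{\kappa_\alpha}$-absolutely continuous part, so in fact $\mu|_{F^r}$ being the part of $\mu$ that is "fixed" by balayage, the excess mass $\mu(D\setminus F^r)>0$ lives where the sweep can genuinely move mass.

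The core of the argument is the pointwise claim: for $\alpha<2$, if ${\rm Int}_{\mathbb R^n}Y\ne\varnothing$, then for every $x\in D\setminus F^r$ one has $(\varepsilon_x)_{\kappa_\alpha}^{F\cup Y}(\mathbb R^n)<1$, equivalently $U_{\kappa_\alpha}^{(\varepsilon_x)_{\kappa_\alpha}^{F\cup Y}}<U_{\kappa_\alpha}^{\varepsilon_x}$ somewhere — indeed this inequality can be tested at infinity via the total mass. I would argue as follows: let $B\subset{\rm Int}_{\mathbb R^n}Y$ be a closed ball, so $B\subset Y\subset F\cup Y$. Then $(\varepsilon_x)_{\kappa_\alpha}^{F\cup Y}(\mathbb R^n)\leqslant(\varepsilon_x)_{\kappa_\alpha}^{B}(\mathbb R^n)$ would not quite work directly (balayage onto a larger set need not decrease mass); instead, compare $\kappa_\alpha$-potentials. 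By \cite[Eq.~(3.18)]{FZ} the deficiency $1-(\varepsilon_x)_{\kappa_\alpha}^{F\cup Y}(\mathbb R^n)$ is governed by whether $F\cup Y$ is $\alpha$-thin at infinity: a closed set $Z$ has $\nu_{\kappa_\alpha}^Z(\mathbb R^n)=\nu(\mathbb R^n)$ for all $\nu$ (with the relevant finiteness) precisely when $Z$ is \emph{not} $\alpha$-thin at infinity, and otherwise there is strict loss. Since ${\rm Int}_{\mathbb R^n}Y\ne\varnothing$ contains a ball, and for $\alpha<2$ a set containing a ball near no particular point still — wait, $Y$ containing a fixed ball does not make $Y$ non-thin at infinity. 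The right statement is: when $\alpha<2$, the single Dirac mass $\varepsilon_x$ with $x$ outside $Z$ has $(\varepsilon_x)_{\kappa_\alpha}^Z(\mathbb R^n)<1$ iff the complement $Z^c$ is "large enough near $x$" in the sense that $x$ is not $\kappa_\alpha$-regular for... no: $(\varepsilon_x)_{\kappa_\alpha}^Z=\varepsilon_x$ iff $x\in Z^r$, which fails here since $x\notin F\cup Y$ entirely (for $x\in D\setminus F$) or $x\in F^i$ is irregular. So $(\varepsilon_x)_{\kappa_\alpha}^{F\cup Y}\ne\varepsilon_x$. But $\ne\varepsilon_x$ is not yet strict mass loss. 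The genuinely delicate input, and the point where I expect to lean on the existing Riesz-balayage literature \cite{Z-bal,Z-bal2}, is: for $\alpha<2$ and $Z$ closed with nonempty interior, every balayage $(\varepsilon_x)_{\kappa_\alpha}^Z$ with $x\notin Z^r$ loses a strictly positive amount of mass. This is where ${\rm Int}_{\mathbb R^n}Y\ne\varnothing$ is used crucially together with $\alpha<2$: a ball is \emph{not} $\alpha$-thin at infinity is false, but a ball \emph{is} of positive capacity and, combined with the maximum principle specific to $\alpha\leqslant2$, forces $U_{\kappa_\alpha}^{(\varepsilon_x)_{\kappa_\alpha}^Z}(y)\to0$ as $y\to\infty$ strictly faster than $U_{\kappa_\alpha}^{\varepsilon_x}$ would if mass were conserved.

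So the key steps, in order, are: (1) invoke Corollary~\ref{cor0} to write $\mu^F_{g_\alpha}(D)=\int(\varepsilon_x)_{\kappa_\alpha}^{F\cup Y}(\mathbb R^n)\,d\mu(x)$; (2) use $\mu\ne\mu|_{F^r}$ and the Kellogg--Evans theorem to get $\mu\big(\{x:\ (\varepsilon_x)_{\kappa_\alpha}^{F\cup Y}\ne\varepsilon_x\}\big)>0$, i.e. a set of positive $\mu$-measure of non-regular points; (3) prove the pointwise strict-loss lemma: for $\alpha<2$ and ${\rm Int}_{\mathbb R^n}Y\ne\varnothing$, every $x$ with $(\varepsilon_x)_{\kappa_\alpha}^{F\cup Y}\ne\varepsilon_x$ satisfies $(\varepsilon_x)_{\kappa_\alpha}^{F\cup Y}(\mathbb R^n)<1$ — by fixing a ball $B\subset{\rm Int}_{\mathbb R^n}Y$, noting the swept measure's potential is dominated on $F\cup Y\supset B$ by $U_{\kappa_\alpha}^{\varepsilon_x}$ q.e., and using that for $\alpha<2$ the Riesz kernel potential of a measure charging a full-dimensional region cannot equal $|x-y|^{\alpha-n}$ identically near infinity unless masses match, while the \emph{complete maximum principle} plus the behaviour of $U_{\kappa_\alpha}^{\varepsilon_x}$ on $B$ forces a genuine gap; (4) integrate: since the integrand is $\leqslant1$ everywhere and $<1$ on a set of positive $\mu$-measure, $\mu^F_{g_\alpha}(D)<\mu(D)$, which is \eqref{Assa}. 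The main obstacle is step (3): making the strict-loss lemma precise and correctly attributing the role of $\alpha<2$ (the case $\alpha=2$ behaves differently because of the different growth/parabolicity of the Newtonian kernel, which is why the theorem is stated separately for $\alpha<2$). I would carefully trace this through \cite[Corollary~5.3]{Z-bal2} and \cite[Eq.~(3.18)]{FZ}, characterizing mass conservation under $\kappa_\alpha$-balayage by non-$\alpha$-thinness of the target set at infinity, and then showing that although $Y$ itself may be $\alpha$-thin at infinity, the presence of an open ball forces, for $\alpha<2$, the relevant strict inequality for \emph{Dirac} measures regardless — alternatively by a direct comparison $U_{\kappa_\alpha}^{(\varepsilon_x)_{\kappa_\alpha}^{F\cup Y}}\leqslant U_{\kappa_\alpha}^{(\varepsilon_x)_{\kappa_\alpha}^{B}}+$ (correction term) and the classical fact that balayage of a point onto a ball in $\mathbb R^n$ ($\alpha<2$) strictly loses mass.
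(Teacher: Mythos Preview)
Your framework via Corollary~\ref{cor0} is sound, and you correctly identify early on that what is needed is $(\varepsilon_x)_{\kappa_\alpha}^{F\cup Y}(F)<1$ on a set of positive $\mu$-measure. But the mechanism you then pursue is the wrong one, and your ``pointwise strict-loss lemma'' in step~(3) is false as stated. You claim that for $\alpha<2$ and ${\rm Int}_{\mathbb R^n}Y\ne\varnothing$, every $x\notin(F\cup Y)^r$ satisfies $(\varepsilon_x)_{\kappa_\alpha}^{F\cup Y}(\mathbb R^n)<1$. Take $D:=B_1$, $F:=\{1/2\leqslant|y|<1\}$, so $Y=\{|y|\geqslant1\}$ has nonempty interior and $F\cup Y=\{|y|\geqslant1/2\}$ is \emph{not} $\alpha$-thin at infinity; hence, by \cite[Theorem~3.22]{FZ}, $(\varepsilon_x)_{\kappa_\alpha}^{F\cup Y}(\mathbb R^n)=1$ for every $x\in B_{1/2}$. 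No mass is lost to infinity, yet the theorem holds. (Relatedly, your summary step~(1) writes $\mu^F_{g_\alpha}(D)=\int(\varepsilon_x)_{\kappa_\alpha}^{F\cup Y}(\mathbb R^n)\,d\mu(x)$, which is not what Corollary~\ref{cor0} gives: the integrand is $(\varepsilon_x)_{\kappa_\alpha}^{F\cup Y}(D)=(\varepsilon_x)_{\kappa_\alpha}^{F\cup Y}(F)$, not the total mass.)

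The strict inequality comes from the \emph{other} term in the decomposition $(\varepsilon_x)_{\kappa_\alpha}^{F\cup Y}(F)=(\varepsilon_x)_{\kappa_\alpha}^{F\cup Y}(\mathbb R^n)-(\varepsilon_x)_{\kappa_\alpha}^{F\cup Y}(Y)$: one shows $(\varepsilon_x)_{\kappa_\alpha}^{F\cup Y}(Y)>0$, i.e.\ some swept mass lands in $Y$ and is therefore excluded from $D$. This is exactly where $\alpha<2$ enters, and it is far simpler than the asymptotic comparison you were attempting: by \cite[Corollary~5.4]{Z-bal2}, for $\alpha<2$ the support of the sweep of any nonzero $\tau$ concentrated off $(F\cup Y)^r$ is the full reduced kernel of $F\cup Y$, which contains ${\rm Int}_{\mathbb R^n}Y\ne\varnothing$; hence the sweep charges $Y$. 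The paper runs this argument directly on $\tau:=\mu|_{D\setminus F^r}$ (without passing through Dirac masses), splitting $\mu^{F\cup Y}_{\kappa_\alpha}=\mu|_{F^r}+\tau^{F\cup Y}_{\kappa_\alpha}$ and concluding $\tau^{F\cup Y}_{\kappa_\alpha}(Y)>0$. For $\alpha=2$ this support description fails (the sweep is carried by $\partial\Omega$), which is precisely why Example~\ref{ex1} breaks the theorem in that case.
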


Theorem~\ref{th-balM1} is sharp in the sense that, in general, it would fail to hold if any of its requirements were dropped (see  Examples~\ref{ex1}--\ref{ex2} below).

We denote by $B_{x,r}$ the open ball of radius $r$ centered at $x$, and $B_r:=B_{0,r}$.

\begin{example}\label{ex1}Let $\alpha=2$, $D:=B_1$, and $F:=D\setminus B_{1/2}$. For any $y\in B_{1/2}$, $(\varepsilon_y)^{F\cup Y}_{\kappa_2}(\mathbb R^n)=1$, $F\cup Y$ not being $2$-thin at infinity, and moreover $S\bigl((\varepsilon_y)^{F\cup Y}_{\kappa_2}\bigr)=\partial_DF$, see  \cite[Theorem~8.5]{Z-bal}.\footnote{Henceforth, $S(\nu)$ denotes the support of $\nu\in\mathfrak M^+$ in $\mathbb R^n$.} On account of (\ref{bal2}), this gives $(\varepsilon_y)^F_{g_2}(D)=1$, which shows that the assumption $\alpha<2$ is indeed important for the validity of Theorem~\ref{th-balM1}.\end{example}

\begin{example}\label{ex3} To confirm the importance of $\mu\ne\mu|_{F^r}$ for the validity of Theorem~\ref{th-balM1}, let $D:=B_1$, $F:=\{|x|\leqslant2^{-1}\}$, and let $\mu$ be the $g_\alpha$-equilibrium measure on $F=F^r$. Since $\mu\in\mathcal E^+_{g_\alpha}(F)$, the $g_\alpha$-balayage $\mu^F_{g_\alpha}$ is actually the orthogonal projection of $\mu$ onto $\mathcal E^+_{g_\alpha}(F)$, cf.\ (\ref{pr}), hence $\mu^F_{g_\alpha}=\mu$, and consequently $\mu^F_{g_\alpha}(D)=\mu(D)$.\end{example}

\begin{example}\label{ex2}Let $Y$ consist of finitely many points in $B_1$, $F:=\{|x|\geqslant1\}$, and let $\mu:=\varepsilon_y$, where $y\in B_1\setminus Y$. Then, by Corollary~\ref{cor-rg}, \[(\varepsilon_y)^F_{g_\alpha}=(\varepsilon_y)^{F\cup Y}_{\kappa_\alpha}=(\varepsilon_y)^F_{\kappa_\alpha},\]
$(\varepsilon_y)^{F\cup Y}_{\kappa_\alpha}$ being $c_{\kappa_\alpha}$-absolutely continuous (see Theorem~\ref{th-balR} below), whence
\[(\varepsilon_y)^F_{g_\alpha}(D)=(\varepsilon_y)^F_{\kappa_\alpha}(F)=
(\varepsilon_y)^F_{\kappa_\alpha}(\mathbb R^n)=1,\]
where the last equality holds true by \cite[Theorem~3.22]{FZ}, $F$ not being $\alpha$-thin at infinity. This illustrates the importance of ${\rm Int}_{\mathbb R^n}Y\ne\varnothing$ for the validity of Theorem~\ref{th-balM1}.
\end{example}

\begin{remark}\label{rem3}
Assume that $\alpha\in(1,2)$, $D$ is a bounded domain of class $C^{1,1}$,\footnote{A domain $D$ is said to be {\it of class} $C^{1,1}$ if for every $y\in\partial D:=\partial_{\mathbb R^n}D$, there exist $B_{x,r}\subset D$ and $B_{x',r}\subset D^c$, where $r>0$, that are tangent at $y$, see \cite[p.~458]{Bogdan}.\label{F11}} and $\mu$ is a positive measure concentrated on $D\setminus F$ and such that $\mu(D)$ or $I_{g_\alpha}(\mu)$ is finite. In this quite a particular case, Theorem~\ref{th-balM1} was recently verified by the author \cite[Theorem~4.2]{Z24} with the aid of the integral representation
\begin{equation}\label{int}
\mu^F_{g_\alpha}=\int(\varepsilon_x)^F_{g_\alpha}\,d\mu(x),
\end{equation}
established in \cite{Z24} (see Theorem~1.5 and Remark~1.6 therein). It was crucial to the proof of (\ref{int}) that, for the above $\alpha$ and $D$, the set of all $\varphi\in C_0(D)$ representable as $g_\alpha$-potentials of signed measures of finite $g_\alpha$-energy is dense in the space $C_0(D)$ equipped with the inductive limit topology; this observation, in turn, was essentially based on Eq.~(19) by Bogdan and Jakubowski \cite{Bogdan} (see \cite[Remark~1.6]{Z24} for details).\footnote{As to {\it the inductive limit topology} on the space $C_0(X)$, $X$ being a second-countable, locally compact space, see Bourbaki \cite[Section~II.4.4]{B4} and \cite[Section~III.1.1]{B2}, cf.\ also \cite[Section~4.1]{Z-arx}.}
\end{remark}

\begin{open}\label{PR}
Under the assumptions of Theorem~\ref{bal-ex}, much more general than those in Remark~\ref{rem3}, does the integral representation (\ref{int}) still hold?\footnote{For $\alpha=2$, see Doob \cite[Section~1.X, Eq.~(5.2)]{Doob}. In the case where $c_{\kappa_\alpha}(Y)=0$, (\ref{int}) can be derived from Bliedtner and Hansen \cite[p.~255]{BH} and Zorii \cite[Theorem~5.1]{Z-bal2}.}
\end{open}

\subsection{The case $0<\alpha\leqslant2$}\label{sec-harm} A set $e\subset\overline{\mathbb R^n}$ is said to be {\it Borel} if so is $e\cap\mathbb R^n$. Here $\overline{\mathbb R^n}:=\mathbb R^n\cup\{\infty_{\mathbb R^n}\}$, $\infty_{\mathbb R^n}$ being the Alexandroff point of $\mathbb R^n$.

Define $\Omega:=D\setminus F$. For any $x\in\Omega$ and any Borel $e\subset\overline{\mathbb R^n}$, we introduce the (fractional) {\it $\alpha$-harmonic measure} $\omega_\alpha(x,e;\Omega)$ by setting
\begin{equation}\label{def-h}
\omega_\alpha(x,e;\Omega)=\left\{
\begin{array}{cl}(\varepsilon_x)^{\Omega^c}_{\kappa_\alpha}(e)&\text{\ if $e\subset\mathbb R^n$},\\
(\varepsilon_x)^{\Omega^c}_{\kappa_\alpha}(e\cap\mathbb R^n)+\omega_\alpha(x,\{\infty_{\mathbb R^n}\};\Omega)&\text{\ otherwise},\\ \end{array} \right.\end{equation}
where
\begin{equation}\label{Def}
\omega_\alpha(x,\{\infty_{\mathbb R^n}\};\Omega):=1-(\varepsilon_x)^{\Omega^c}_{\kappa_\alpha}(\mathbb R^n).
\end{equation}
Thus,
\[\omega_\alpha(x,\overline{\mathbb R^n};\Omega)=1\quad\text{for all $x\in\Omega$}.\]
Note that for $\alpha=2$, $S\bigl((\varepsilon_x)^{\Omega^c}_{\kappa_2}\bigr)\subset\partial\Omega$ for all $x\in\Omega$,
cf.\ \cite[Theorem~8.5]{Z-bal}, and hence the concept of $\alpha$-harmonic measure, introduced by (\ref{def-h}) and (\ref{Def}), generalizes that of $2$-har\-mo\-nic measure, defined in \cite[Section~IV.3.12]{L} for Borel subsets of $\partial\Omega$.

$\blacktriangleright$ In the rest of this section as well as in Sections~\ref{sec-exi} and \ref{sec-Deny}, we assume for simplicity of formulations that either $\alpha<2$ or the open set $\Omega$ is {\it connected}.

\begin{theorem}\label{th-balM2} The following {\rm(i)}--{\rm(iii)} are equivalent.
\begin{itemize}
\item[{\rm(i)}] $\mu^F_{g_\alpha}(D)=\mu(D)$ for any $\mu\in\mathfrak M^+(D)$ such that $\mu|_F\in\breve{\mathfrak M}^+$ and $\mu\ne\mu|_F$.
\item[{\rm(ii)}] $\omega_\alpha(x,\overline{\mathbb R^n}\setminus D;\Omega)=0$ for all $x\in\Omega$.
\item[{\rm(iii)}] $\Omega^c$ is not $\alpha$-thin at infinity, and
\begin{equation}\label{dc}
\omega_\alpha(x,Y;\Omega)=0\quad\text{for all $x\in\Omega$}.
\end{equation}
\end{itemize}
\end{theorem}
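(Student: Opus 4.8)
The plan is to derive Theorem~\ref{th-balM2} from the single mass identity
\begin{equation*}
\mu^F_{g_\alpha}(D)=\mu(D)-\int_\Omega\omega_\alpha\bigl(x,\overline{\mathbb R^n}\setminus D;\Omega\bigr)\,d\mu(x),
\end{equation*}
valid for every $\mu$ admissible in (i), and then to read off (i)$\Leftrightarrow$(ii) by letting $\mu$ vary and (ii)$\Leftrightarrow$(iii) by splitting the $\alpha$-harmonic measure of $\overline{\mathbb R^n}\setminus D$ into its trace on $Y$ and its point mass at $\infty_{\mathbb R^n}$.

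To prove the identity, recall that $\mu^F_{g_\alpha}\in\breve{\mathfrak M}^+(F)$, so $\mu^F_{g_\alpha}(D)=\mu^F_{g_\alpha}(F)$, and Corollary~\ref{cor0} with $e:=F$ gives $\mu^F_{g_\alpha}(D)=\int(\varepsilon_x)^{F\cup Y}_{\kappa_\alpha}(F)\,d\mu(x)$. Here $F\cup Y=\Omega^c$ is closed, $F\cap Y=\varnothing$, and $(\varepsilon_x)^{F\cup Y}_{\kappa_\alpha}$ is concentrated on $F\cup Y$. Since $\mu|_F$ is $c_{\kappa_\alpha}$-absolutely continuous while $c_{\kappa_\alpha}(F^i)=0$ by the Kellogg--Evans type theorem \cite[Theorem~6.6]{Z-bal}, $\mu$ is carried by $F^r\cup\Omega$; and for $x\in F^r$ one has $(\varepsilon_x)^{F\cup Y}_{\kappa_\alpha}=\varepsilon_x$ \cite[Section~6]{Z-bal}, whence $(\varepsilon_x)^{F\cup Y}_{\kappa_\alpha}(F)=1$, so the contribution of $\mu|_F$ equals $\mu(F)$. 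For $x\in\Omega$, using that $(\varepsilon_x)^{F\cup Y}_{\kappa_\alpha}=(\varepsilon_x)^{\Omega^c}_{\kappa_\alpha}$ is carried by the disjoint union $F\cup Y$, together with (\ref{def-h}) and (\ref{Def}),
\begin{align*}
(\varepsilon_x)^{F\cup Y}_{\kappa_\alpha}(F)&=(\varepsilon_x)^{\Omega^c}_{\kappa_\alpha}(\mathbb R^n)-(\varepsilon_x)^{\Omega^c}_{\kappa_\alpha}(Y)\\
&=1-\omega_\alpha(x,\{\infty_{\mathbb R^n}\};\Omega)-\omega_\alpha(x,Y;\Omega)=1-\omega_\alpha\bigl(x,\overline{\mathbb R^n}\setminus D;\Omega\bigr),
\end{align*}
the last step by (\ref{def-h}), since $(\overline{\mathbb R^n}\setminus D)\cap\mathbb R^n=Y$ and $\infty_{\mathbb R^n}\in\overline{\mathbb R^n}\setminus D$. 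Adding the two contributions yields the identity. As its integrand is nonnegative, the identity shows that $\mu^F_{g_\alpha}(D)=\mu(D)$ iff $\omega_\alpha(\cdot,\overline{\mathbb R^n}\setminus D;\Omega)=0$ $\mu$-a.e.\ on $\Omega$; hence (ii)$\Rightarrow$(i) at once, while (i)$\Rightarrow$(ii) follows by applying (i) to $\mu:=\varepsilon_{x_0}$ for an arbitrary $x_0\in\Omega$ (admissible, as $\varepsilon_{x_0}|_F=0\in\breve{\mathfrak M}^+$ and $\varepsilon_{x_0}\ne\varepsilon_{x_0}|_F$), which forces $\omega_\alpha(x_0,\overline{\mathbb R^n}\setminus D;\Omega)=0$.

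For (ii)$\Leftrightarrow$(iii), note that by the definition (\ref{def-h})--(\ref{Def}) one has, for $x\in\Omega$, the pointwise decomposition $\omega_\alpha(x,\overline{\mathbb R^n}\setminus D;\Omega)=\omega_\alpha(x,Y;\Omega)+\omega_\alpha(x,\{\infty_{\mathbb R^n}\};\Omega)$ into nonnegative summands; hence (ii) is equivalent to (\ref{dc}) holding together with $\omega_\alpha(x,\{\infty_{\mathbb R^n}\};\Omega)=0$, i.e.\ $(\varepsilon_x)^{\Omega^c}_{\kappa_\alpha}(\mathbb R^n)=1$, for all $x\in\Omega$. The latter --- that the $\kappa_\alpha$-balayage onto $\Omega^c$ of every $\varepsilon_x$, $x\in\Omega$, preserves total mass --- is precisely the assertion that $\Omega^c$ is not $\alpha$-thin at infinity, by \cite[Corollary~5.3]{Z-bal2} (cf.\ \cite[Theorem~3.22]{FZ}). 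This closes the chain of equivalences. I expect the one delicate point to be exactly this last equivalence: the cited mass-preservation criterion has to be used in the ``for every $x\in\Omega$'' form, which is where the standing hypothesis ($\alpha<2$, or else $\Omega$ connected) enters --- for $\alpha=2$ it lets one pass from a single $x$ to all $x$ via the minimum principle for nonnegative $\alpha$-harmonic functions on the connected set $\Omega$; as a sanity check, when $c_{\kappa_\alpha}(Y)=0$ the term $\omega_\alpha(x,Y;\Omega)$ vanishes identically (the $\kappa_\alpha$-balayage being $c_{\kappa_\alpha}$-absolutely continuous), so (iii) collapses to the non-$\alpha$-thinness of $\Omega^c=F$ at infinity, consistently with Corollary~\ref{cor-rg}.
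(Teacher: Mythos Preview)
Your proof is correct and follows essentially the same route as the paper's: reduce via (\ref{bal2}) and the integral representation for $\kappa_\alpha$-balayage to the Dirac-mass condition $(\varepsilon_x)^{\Omega^c}_{\kappa_\alpha}(F)=1$ for all $x\in\Omega$, split this into $(\varepsilon_x)^{\Omega^c}_{\kappa_\alpha}(Y)=0$ and $(\varepsilon_x)^{\Omega^c}_{\kappa_\alpha}(\mathbb R^n)=1$, and invoke \cite[Theorem~3.22]{FZ} for the thinness equivalence. Your packaging of this as the single mass-defect identity $\mu^F_{g_\alpha}(D)=\mu(D)-\int_\Omega\omega_\alpha(x,\overline{\mathbb R^n}\setminus D;\Omega)\,d\mu(x)$ is a tidy way to organize the argument, but the underlying steps coincide with the paper's.
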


\begin{remark}
For $\alpha=2$, (\ref{dc}) can be rewritten in the apparently weaker form
\[\omega_2(x,\partial\Omega\cap\partial D;\Omega)=0\quad\text{for all $x\in\Omega$}.\]
Indeed, this is clear from $S\bigl((\varepsilon_x)^{\Omega^c}_{\kappa_2}\bigr)\subset\partial\Omega$, cf.\ \cite[Theorem~8.5]{Z-bal}.
\end{remark}

\begin{corollary}\label{cor-balM0} If $c_{\kappa_\alpha}(Y)=0$, then {\rm(i)}--{\rm(vi)} are equivalent, where:
\begin{itemize}
\item[{\rm(iv)}] $\omega_\alpha(x,\{\infty_{\mathbb R^n}\};\Omega)=0$ for all $x\in\Omega$.
\item[{\rm(v)}] $\omega_\alpha(x,\{\infty_{\mathbb R^n}\};\Omega)=0$ for some $x\in\Omega$.
\item[{\rm(vi)}] $\Omega^c$ {\rm(}equivalently, $F${\rm)} is not $\alpha$-thin at infinity.
\end{itemize}
\end{corollary}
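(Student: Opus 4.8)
The plan is to enlarge the equivalence (i)$\,\Leftrightarrow\,$(ii)$\,\Leftrightarrow\,$(iii) furnished by Theorem~\ref{th-balM2} so as to include (iv)--(vi), using only the definition of the $\alpha$-harmonic measure, the $c_{\kappa_\alpha}$-absolute continuity of $\kappa_\alpha$-swept measures, and the known description of $\alpha$-thinness at infinity through mass preservation under $\kappa_\alpha$-balayage. Throughout, observe that $\Omega^c=F\cup Y$ is closed in $\mathbb R^n$, and that by Theorem~\ref{th-balR} the measure $(\varepsilon_x)^{\Omega^c}_{\kappa_\alpha}$ is $c_{\kappa_\alpha}$-absolutely continuous, so that the hypothesis $c_{\kappa_\alpha}(Y)=0$ yields $(\varepsilon_x)^{\Omega^c}_{\kappa_\alpha}(Y)=0$ for every $x\in\Omega$.

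First I would record two reductions. Since $\overline{\mathbb R^n}\setminus D=Y\cup\{\infty_{\mathbb R^n}\}$, the definition (\ref{def-h})--(\ref{Def}) gives, for all $x\in\Omega$,
\[\omega_\alpha\bigl(x,\overline{\mathbb R^n}\setminus D;\Omega\bigr)=(\varepsilon_x)^{\Omega^c}_{\kappa_\alpha}(Y)+\omega_\alpha\bigl(x,\{\infty_{\mathbb R^n}\};\Omega\bigr)=\omega_\alpha\bigl(x,\{\infty_{\mathbb R^n}\};\Omega\bigr),\]
so that (ii) coincides with (iv). Likewise, in (iii) the requirement (\ref{dc}) reads $\omega_\alpha(x,Y;\Omega)=(\varepsilon_x)^{\Omega^c}_{\kappa_\alpha}(Y)=0$ and therefore holds automatically, whence (iii) reduces to ``$\Omega^c$ is not $\alpha$-thin at infinity''; moreover $c_{\kappa_\alpha}(Y)=0$ forces $c_{\kappa_\alpha}\bigl((F\cup Y)_j\bigr)=c_{\kappa_\alpha}(F_j)$ for every $j$, by monotonicity and subadditivity of $c_{\kappa_\alpha}$, so $\Omega^c=F\cup Y$ is $\alpha$-thin at infinity if and only if $F$ is. This already gives (ii)$\,\Leftrightarrow\,$(iv), (iii)$\,\Leftrightarrow\,$(vi), and the parenthetical assertion in (vi).

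It then remains to relate (iv), (v), (vi) to one another, and here I would invoke the characterization of $\alpha$-thinness at infinity via the total mass of the $\kappa_\alpha$-balayage (\cite[Corollary~5.3]{Z-bal2}, cf.\ also \cite[Theorem~3.22]{FZ}): for the closed set $\Omega^c$ and $x\in\Omega$ (hence $x\notin\Omega^c$), one has $(\varepsilon_x)^{\Omega^c}_{\kappa_\alpha}(\mathbb R^n)=1$ for some, equivalently every, $x\in\Omega$ if and only if $\Omega^c$ is not $\alpha$-thin at infinity --- the ``some versus every'' alternative being automatic because the right-hand condition does not involve $x$. Since $\omega_\alpha(x,\{\infty_{\mathbb R^n}\};\Omega)=1-(\varepsilon_x)^{\Omega^c}_{\kappa_\alpha}(\mathbb R^n)$ by (\ref{Def}), this is precisely the equivalence of (iv), (v) and (vi). Combining this with Theorem~\ref{th-balM2} and the reductions of the previous paragraph, all of (i)--(vi) are seen to be equivalent.

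The only input that is not pure bookkeeping is the cited equivalence between $\alpha$-thinness of $\Omega^c$ at infinity and the loss of mass of $(\varepsilon_x)^{\Omega^c}_{\kappa_\alpha}$; the rest consists in unwinding the definition of $\omega_\alpha$ and using that $\kappa_\alpha$-swept measures charge no set of zero $\kappa_\alpha$-capacity. Consequently, the point requiring care is to apply \cite[Corollary~5.3]{Z-bal2}/\cite[Theorem~3.22]{FZ} only for $x$ in $\Omega=(\Omega^c)^c$, where the stated ``if and only if'' holds; it would fail, for example, at $\kappa_\alpha$-regular points of $\Omega^c$, where the balayage of $\varepsilon_x$ is again $\varepsilon_x$.
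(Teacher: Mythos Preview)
Your proof is correct and follows essentially the same approach as the paper's own proof: use the $c_{\kappa_\alpha}$-absolute continuity of $(\varepsilon_x)^{\Omega^c}_{\kappa_\alpha}$ together with $c_{\kappa_\alpha}(Y)=0$ to collapse (ii) to (iv), and then invoke \cite[Theorem~3.22]{FZ} (equivalently \cite[Corollary~5.3]{Z-bal2}) for the equivalence of (iv)--(vi). Your write-up is slightly more detailed than the paper's---you also spell out why (iii) reduces to (vi) and why the parenthetical ``equivalently, $F$'' in (vi) holds---but the argument is the same.
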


\begin{proof}
As $c_{\kappa_\alpha}(Y)=0$, we have $(\varepsilon_x)^{\Omega^c}_{\kappa_\alpha}(Y)=0$ for all $x\in\Omega$, $(\varepsilon_x)^{\Omega^c}_{\kappa_\alpha}$ being $c_{\kappa_\alpha}$-ab\-s\-ol\-ut\-ely continuous (Theorem~\ref{th-balR}), and (\ref{def-h}) shows that (ii) and (iv) are indeed equivalent. In view of (\ref{Def}), the equivalence of (iv)--(vi) follows from \cite[Theorem~3.22]{FZ}.
\end{proof}

\begin{example}\label{ex4'}
Theorem~\ref{th-balM2} is sharp in the sense that it would fail if $\mu\ne\mu|_F$ were dropped from the hypotheses. Indeed, let $\alpha\in(0,2)$, $D:=\{|x|>1\}$, and let $F:={\rm Cl}_{\mathbb R^n}B_{x_0,1}$, where $x_0:=(3,0,\ldots,0)$. Then for any $\mu\in\mathcal E^+_{g_\alpha}(F)$, $\mu^F_{g_\alpha}(D)=\mu(D)$ in consequence of (\ref{pr}), in spite of the fact that (ii) fails to hold. To verify the latter, we note from \cite[Theorem~8.5]{Z-bal} that
\[S\bigl((\varepsilon_x)^{\Omega^c}_{\kappa_\alpha}\bigr)=\Omega^c=F\cup Y\quad\text{for every $x\in\Omega$},\]  whence \[\omega_\alpha(x,\overline{\mathbb R^n}\setminus D;\Omega)\geqslant(\varepsilon_x)^{\Omega^c}_{\kappa_\alpha}(Y)>0.\]
\end{example}

\begin{example}\label{ex4''}
For $\alpha=2$, the connectedness of $\Omega$ is important for the validity of Corollary~\ref{cor-balM0}. To confirm this, let $D:=\mathbb R^n$ and $F:=\{1\leqslant|y|\leqslant2\}$. Then $\Omega^c=F$ is $\alpha$-thin at infinity, and so (vi) fails, in spite of the fact that $\omega_2(x,\{\infty_{\mathbb R^n}\};\Omega)=0$ for all $x\in B_1$. To verify the latter, we note that, for all $x\in B_1$, $S\bigl((\varepsilon_x)^F_{\kappa_2}\bigr)=\partial B_1$ by \cite[Theorem~8.5]{Z-bal},
hence $(\varepsilon_x)^F_{\kappa_2}=(\varepsilon_x)^{B_1^c}_{\kappa_2}$,
and therefore $(\varepsilon_x)^{\Omega^c}_{\kappa_2}(\mathbb R^n)=(\varepsilon_x)^{B_1^c}_{\kappa_2}(\mathbb R^n)=1$.
\end{example}

\section{On the existence of the $g_\alpha$-equilibrium measure $\gamma_F$}\label{sec-exi}

\begin{definition}\label{def-eq}
Given a relatively closed $F\subset D$, $\gamma_F:=\gamma_{F,g_\alpha}\in\breve{\mathfrak M}^+(F)$ is said to be the {\it $g_\alpha$-equilibrium measure} of $F$ if
\begin{equation}\label{EQ}
U^{\gamma_F}_{g_\alpha}=1\quad\text{q.e.\ on $F$}.
\end{equation}
\end{definition}

\begin{lemma}\label{eq-un}
The $g_\alpha$-equilibrium measure $\gamma_F$ is unique (if it exists).
\end{lemma}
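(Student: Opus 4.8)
The plan is to establish uniqueness of $\gamma_F$ as a direct consequence of the domination principle for $g_\alpha$-potentials, exactly in the form quoted from \cite[Theorem~4.6]{FZ}. Suppose $\gamma_1,\gamma_2\in\breve{\mathfrak M}^+(F)$ both satisfy $U^{\gamma_i}_{g_\alpha}=1$ q.e.\ on $F$. Since $\breve{\mathfrak M}^+(F)$ consists of $c_{\kappa_\alpha}$-absolutely continuous (equivalently $c_{g_\alpha}$-absolutely continuous) measures concentrated on $F$, we have $U^{\gamma_1}_{g_\alpha}=U^{\gamma_2}_{g_\alpha}$ not merely q.e.\ on $F$ but $\gamma_i$-a.e.\ for $i=1,2$; indeed, the exceptional set in the q.e.\ statement has $c_{g_\alpha}$-capacity zero, hence is $\gamma_i$-negligible.

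Next I would invoke the domination principle twice, with the roles of $\gamma_1$ and $\gamma_2$ interchanged. One subtlety: the domination principle as stated in the excerpt requires one of the two measures to have finite $g_\alpha$-energy, and a priori $\gamma_F$ need not (the definition explicitly allows $\gamma_F(D)=\infty$). I would handle this by a standard truncation/exhaustion argument: write $F=\bigcup_k F_k$ with $F_k$ relatively closed (say $F_k:=F\cap\mathrm{Cl}_{\mathbb R^n}B_k$ if $D=\mathbb R^n$, or an analogous exhaustion by relatively closed subsets of $D$), and set $\gamma_i^{(k)}:=\gamma_i|_{F_k}$. These truncations have $U^{\gamma_i^{(k)}}_{g_\alpha}\leqslant U^{\gamma_i}_{g_\alpha}$, and by the boundedness of $g_\alpha$-potentials on compact sets together with $c_{g_\alpha}$-absolute continuity one checks $\gamma_i^{(k)}\in\mathcal E^+_{g_\alpha}$. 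On $F_k$ one has $U^{\gamma_1^{(k)}}_{g_\alpha}=1-U^{\gamma_1-\gamma_1^{(k)}}_{g_\alpha}\leqslant 1=U^{\gamma_2}_{g_\alpha}$ q.e., hence $\gamma_1^{(k)}$-a.e.; applying the domination principle (with $\psi\equiv0$, or a suitable positive $\alpha$-superharmonic majorant absorbing the non-compactness) gives $U^{\gamma_1^{(k)}}_{g_\alpha}\leqslant U^{\gamma_2}_{g_\alpha}$ on all of $D$. Letting $k\to\infty$ and using monotone convergence yields $U^{\gamma_1}_{g_\alpha}\leqslant U^{\gamma_2}_{g_\alpha}$ on $D$; by symmetry the reverse inequality holds, so $U^{\gamma_1}_{g_\alpha}=U^{\gamma_2}_{g_\alpha}$ everywhere on $D$.

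Finally, equality of $g_\alpha$-potentials forces equality of the measures: the kernel $g_\alpha$ is perfect \cite[Theorem~4.11]{FZ}, and for a perfect (hence strictly positive-definite) kernel, two measures with finite energy and the same potential coincide; in the present possibly-infinite-energy setting one again descends to the truncations $\gamma_i^{(k)}$, for which $U^{\gamma_1^{(k)}}_{g_\alpha}=U^{\gamma_2^{(k)}}_{g_\alpha}$ follows from $\gamma_1|_{F_k}$ and $\gamma_2|_{F_k}$ having potentials agreeing q.e.\ on $F_k$ together with the preceding inequality, whence $\gamma_1^{(k)}=\gamma_2^{(k)}$ by uniqueness in the finite-energy case (equivalently, by the consistency/uniqueness built into the $\kappa_\alpha$-balayage identification), and letting $k\to\infty$ gives $\gamma_1=\gamma_2$. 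The main obstacle is purely the bookkeeping around possibly infinite $\gamma_F(D)$: one must make sure the truncations genuinely have finite $g_\alpha$-energy and that the limit passages are justified, but once the exhaustion is set up correctly, the domination principle and perfectness do all the work.
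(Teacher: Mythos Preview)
Your route differs substantially from the paper's. The paper gives a three-line energy argument: since both $\xi$ and $\gamma_F$ are $c_{\kappa_\alpha}$-absolutely continuous and concentrated on $F$, the equality $U_{g_\alpha}^\xi=U_{g_\alpha}^{\gamma_F}=1$ q.e.\ on $F$ holds $(\xi+\gamma_F)$-a.e., whence $I_{g_\alpha}(\xi-\gamma_F)=0$ and $\xi=\gamma_F$ by the strict positive definiteness of $g_\alpha$ \cite[Theorem~4.9]{FZ}. No domination principle, no truncation.

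Your concern about infinite energy is reasonable, and your truncation-plus-domination argument up through $U^{\gamma_1}_{g_\alpha}=U^{\gamma_2}_{g_\alpha}$ everywhere on $D$ is correct. But the final step has a genuine gap: from $U^{\gamma_1}_{g_\alpha}=U^{\gamma_2}_{g_\alpha}$ on $D$ it does \emph{not} follow that $U^{\gamma_1^{(k)}}_{g_\alpha}=U^{\gamma_2^{(k)}}_{g_\alpha}$, because the tails $\gamma_i|_{F\setminus F_k}$ may contribute differently on $F_k$; hence your deduction ``$\gamma_1^{(k)}=\gamma_2^{(k)}$ by the finite-energy uniqueness'' is unjustified. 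The clean way to finish, once you have equality of the full $g_\alpha$-potentials on $D$, is to rewrite via Lemma~\ref{l-FZ} as
\[
U^{\gamma_1}_{\kappa_\alpha}+U^{(\gamma_2)^Y_{\kappa_\alpha}}_{\kappa_\alpha}
=U^{\gamma_2}_{\kappa_\alpha}+U^{(\gamma_1)^Y_{\kappa_\alpha}}_{\kappa_\alpha}
\quad\text{q.e.\ on }\mathbb R^n,
\]
apply the uniqueness theorem for $\kappa_\alpha$-potentials \cite[p.~178, Remark]{L} to obtain $\gamma_1+(\gamma_2)^Y_{\kappa_\alpha}=\gamma_2+(\gamma_1)^Y_{\kappa_\alpha}$, and restrict to $D$. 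Note also that the paper's Theorem~\ref{th-dom} states a domination principle requiring only $c_{\kappa_\alpha}$-absolute continuity rather than finite energy, so with that tool the entire truncation becomes unnecessary.
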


\begin{proof}
If (\ref{EQ}) also holds for some $\xi\in\breve{\mathfrak M}^+(F)$ in place of $\gamma_F$, then,
in view of the $c_{\kappa_\alpha}$-absolute continuity of $\xi+\gamma_F$, $U_{g_\alpha}^\xi=U_{g_\alpha}^{\gamma_F}$ $(\xi+\gamma_F)$-a.e., hence $I_{g_\alpha}(\xi-\gamma_F)=0$, and so $\xi=\gamma_F$, by the strict positive definiteness of the kernel $g_\alpha$ \cite[Theorem~4.9]{FZ}.
\end{proof}

If $c_{g_\alpha}(F)<\infty$, then such $\gamma_F$ does exist,  and moreover (see \cite[Theorem~4.12]{FZ})
\[I_{g_\alpha}(\gamma_F)=\gamma_F(D)=c_{g_\alpha}(F).\]
The analysis performed in \cite{FZ} used substantially the perfectness of the kernel $g_\alpha$, discovered in Theorem~4.11 therein (see Section~\ref{sec1'} above for details). However, in the case $c_{g_\alpha}(F)\leqslant\infty$, which we suppose to take place, this tool becomes insufficient.

\begin{theorem}\label{th-eq}
If $\gamma_F$ exists, then necessarily
\begin{equation}\label{eq-eq}
\mu^F_{g_\alpha}(D)<\mu(D)\quad\text{for all nonzero $\mu\in\mathfrak M^+(\Omega)$},
\end{equation}
which implies, in turn, that
\begin{equation}\label{eq-eq1}
\omega_\alpha(x_0,\overline{\mathbb R^n}\setminus D;\Omega)>0\quad\text{for some $x_0\in\Omega$}.
\end{equation}
If $c_{\kappa_\alpha}(Y)=0$, then these two implications can actually be reversed; and so\footnote{Assertion (\ref{eq-eq2}) can be rewritten in the following apparently stronger form: for $\gamma_F$ to exist, it is necessary and sufficient that  $\omega_\alpha(x,\{\infty_{\mathbb R^n}\};\Omega)>0$ for all $x\in\Omega$.}
\begin{equation}\label{eq-eq2}\text{$\gamma_F$ exists}\iff\text{$\omega_\alpha(x_0,\{\infty_{\mathbb R^n}\};\Omega)>0$ \ for some $x_0\in\Omega$},\end{equation}
or equivalently,\footnote{Compare with Landkof \cite[Theorem~5.1]{L} and Zorii \cite[Theorem~2.1]{Z-bal2}, dealing with $D:=\mathbb R^n$.}
\begin{equation}\label{eq-eq3}\text{$\gamma_F$ exists}\iff\text{$\Omega^c$ is $\alpha$-thin at infinity}.\end{equation}
\end{theorem}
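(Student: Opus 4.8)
The plan is to prove Theorem~\ref{th-eq} in three blocks: first the chain of \emph{necessary} conditions $\gamma_F\text{ exists}\Rightarrow(\ref{eq-eq})\Rightarrow(\ref{eq-eq1})$ valid for general $Y$, then the \emph{reversals} under the extra hypothesis $c_{\kappa_\alpha}(Y)=0$, and finally the translation of (\ref{eq-eq1}) into the $\alpha$-thinness statements (\ref{eq-eq2}) and (\ref{eq-eq3}).

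For the first block, suppose $\gamma_F$ exists and fix a nonzero $\mu\in\mathfrak M^+(\Omega)$; since $\mu|_{F^r}=0$ is trivially $c_{\kappa_\alpha}$-absolutely continuous, $\mu^F_{g_\alpha}$ is defined by Theorem~\ref{bal-ex}. I would test the balayage against $\gamma_F$: using the defining property (\ref{bal1'}) of $\mu^F_{g_\alpha}$, the Kellogg--Evans type theorem ($F^i$ is $c_{\kappa_\alpha}$-null) and the $c_{\kappa_\alpha}$-absolute continuity of $\mu^F_{g_\alpha}$, one gets $U_{g_\alpha}^{\mu^F_{g_\alpha}}=U_{g_\alpha}^\mu$ $\mu^F_{g_\alpha}$-a.e., so by Fubini and (\ref{EQ}), $\mu^F_{g_\alpha}(D)=\int U_{g_\alpha}^{\gamma_F}\,d\mu^F_{g_\alpha}=\int U_{g_\alpha}^{\mu^F_{g_\alpha}}\,d\gamma_F=\int U_{g_\alpha}^{\mu}\,d\gamma_F$; the last integral is $\le\int_F U_{g_\alpha}^{\gamma_F}\,d\mu=\mu(D)$ after swapping again and using $U_{g_\alpha}^{\gamma_F}\le1$ everywhere (domination principle applied to (\ref{EQ})). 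Strictness must be extracted from $\mu\ne0$ concentrated off $F$: if equality held, then $U_{g_\alpha}^{\gamma_F}=1$ $\mu$-a.e., i.e.\ on a set meeting $\Omega$; I expect to rule this out by an argument that $U_{g_\alpha}^{\gamma_F}<1$ on $\Omega$ — either via the domination/maximum principle forcing $U_{g_\alpha}^{\gamma_F}$ to be an $\alpha$-Green potential that is $<1$ off the support-relevant set, or, cleaner, by noting $U_{g_\alpha}^{\mu^F_{g_\alpha}}=U_{g_\alpha}^\mu$ on $F^r$ with $\mu^F_{g_\alpha}\in\breve{\mathfrak M}^+(F)$, $\mu\in\mathfrak M^+(\Omega)$ and invoking strict inequality in the principle of positivity of mass unless $\mu^F_{g_\alpha}=\mu$, which is impossible as their supports are essentially disjoint and $\mu\ne0$. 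That $(\ref{eq-eq})\Rightarrow(\ref{eq-eq1})$ is then the contrapositive of Theorem~\ref{th-balM2}: if $\omega_\alpha(x,\overline{\mathbb R^n}\setminus D;\Omega)=0$ for all $x\in\Omega$, condition (i) there gives $\mu^F_{g_\alpha}(D)=\mu(D)$ for the relevant $\mu$, in particular for some nonzero $\mu\in\mathfrak M^+(\Omega)$ (which satisfies $\mu|_F=0\in\breve{\mathfrak M}^+$ and $\mu\ne\mu|_F$), contradicting (\ref{eq-eq}).

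For the reversal under $c_{\kappa_\alpha}(Y)=0$: here, by Corollary~\ref{cor-balM0}, the conditions $\omega_\alpha(x,\overline{\mathbb R^n}\setminus D;\Omega)=0$, $\omega_\alpha(x,\{\infty_{\mathbb R^n}\};\Omega)=0$, and ``$\Omega^c$ not $\alpha$-thin at infinity'' are all equivalent (for all $x$, equivalently for some $x$), so (\ref{eq-eq1}) says exactly that $\Omega^c$ \emph{is} $\alpha$-thin at infinity. To close the loop I must show $\alpha$-thinness of $\Omega^c$ at infinity implies $\gamma_F$ exists. Since $c_{\kappa_\alpha}(Y)=0$, Corollary~\ref{cor-rg} identifies $g_\alpha$-balayage onto $F$ with $\kappa_\alpha$-balayage onto $F\cup Y$, and more importantly the $g_\alpha$-equilibrium problem on $F$ reduces to the $\kappa_\alpha$-equilibrium problem on the closed set $F\cup Y=\Omega^c$; by the classical Landkof/Zorii criterion (\cite[Theorem~5.1]{L}, \cite[Theorem~2.1]{Z-bal2}) the $\kappa_\alpha$-equilibrium measure of $\Omega^c$ exists precisely when $\Omega^c$ is $\alpha$-thin at infinity. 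Its restriction to $F$ (which equals the whole measure since $c_{\kappa_\alpha}(Y)=0$) is then a $c_{\kappa_\alpha}$-absolutely continuous measure on $F$ whose $g_\alpha$-potential equals its $\kappa_\alpha$-potential q.e.\ — up to the harmless correction term in (\ref{g}) which vanishes on $Y^c\supset F$ q.e.\ — hence equals $1$ q.e.\ on $F$, giving $\gamma_F$. This yields (\ref{eq-eq2}) via the equivalence (iv)$\Leftrightarrow$(vi) in Corollary~\ref{cor-balM0}, and (\ref{eq-eq3}) is then just the restatement of (vi).

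The main obstacle I anticipate is the \emph{strictness} in (\ref{eq-eq}): turning the soft inequality $\mu^F_{g_\alpha}(D)\le\mu(D)$ into a strict one for every nonzero $\mu\in\mathfrak M^+(\Omega)$, under no regularity on $\mu$ (possibly $I_{g_\alpha}(\mu)=\infty$, $\mu$ unbounded). The energy-space projection picture (\ref{pr}) is unavailable, so I would argue via potentials: establish $U_{g_\alpha}^{\gamma_F}<1$ at every point of $\Omega$ — this should follow because $U_{g_\alpha}^{\gamma_F}$ is a $g_\alpha$-potential of a measure carried by $F$, hence $\alpha$-harmonic (or at least not $\equiv1$) on the open set $\Omega$, and by the minimum principle / connectedness it cannot attain its supremum $1$ there unless it is constant, which would force $F=D$ — then the equality case above forces $U_{g_\alpha}^{\gamma_F}=1$ $\mu$-a.e., impossible for nonzero $\mu$ on $\Omega$. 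The one subtlety is whether $U_{g_\alpha}^{\gamma_F}$ could hit $1$ on a $c_{\kappa_\alpha}$-null subset of $\Omega$ that nonetheless carries mass of $\mu$; but $\mu$ need not be $c_{\kappa_\alpha}$-absolutely continuous. I would handle this by instead comparing $\mu$ directly with $\mu^F_{g_\alpha}$: if $\mu^F_{g_\alpha}(D)=\mu(D)$, the Deny-type positivity-of-mass results of Section~\ref{sec-Deny} (or the refined principle cited in the abstract) force $U_{g_\alpha}^{\mu^F_{g_\alpha}}=U_{g_\alpha}^\mu$ \emph{everywhere} on $D$, not just on $F^r$; evaluating on $\Omega$, where $U_{g_\alpha}^{\mu^F_{g_\alpha}}$ is $\alpha$-harmonic while $U_{g_\alpha}^\mu$ is a genuine nonzero $g_\alpha$-potential of a measure charging $\Omega$, yields the desired contradiction by the Riesz decomposition / uniqueness of mass. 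This route sidesteps the pointwise pathology and is the step I would write out most carefully.
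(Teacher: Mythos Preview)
Your overall plan is correct and essentially coincides with the paper's: the identity $\mu^F_{g_\alpha}(D)=\int U^{\gamma_F}_{g_\alpha}\,d\mu$ that you obtain by testing against $\gamma_F$ is exactly the paper's Lemma~\ref{lemma2}, the reversal under $c_{\kappa_\alpha}(Y)=0$ via the Landkof/Zorii criterion and Corollary~\ref{cor-balM0} is precisely what the paper does, and $(\ref{eq-eq})\Rightarrow(\ref{eq-eq1})$ as the contrapositive of Theorem~\ref{th-balM2} is correct.

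The one genuine issue is your handling of the strictness in (\ref{eq-eq}). Your route~(a)~--- proving $U^{\gamma_F}_{g_\alpha}<1$ on $\Omega$~--- is the correct one, and it is exactly what the paper does (Lemma~\ref{lemma1}, relation~(\ref{E3})). Your concern that $U^{\gamma_F}_{g_\alpha}$ might hit $1$ on a $c_{\kappa_\alpha}$-null subset of $\Omega$ carrying $\mu$-mass is unfounded: the inequality is established at \emph{every} point of $\Omega$, not merely q.e., so no absolute-continuity hypothesis on $\mu$ is needed. Indeed, since (\ref{eq-eq}) must hold in particular for $\mu=\varepsilon_x$ with arbitrary $x\in\Omega$, and since for such $\mu$ your identity reads $(\varepsilon_x)^F_{g_\alpha}(D)=U^{\gamma_F}_{g_\alpha}(x)$, the pointwise statement $U^{\gamma_F}_{g_\alpha}<1$ on $\Omega$ is \emph{equivalent} to (\ref{eq-eq}), so there is no way to sidestep it. The paper's argument for~(\ref{E3}) (in the case $c_{\kappa_\alpha}(Y)>0$, the case $c_{\kappa_\alpha}(Y)=0$ being handled separately at the outset) passes through the Riesz splitting $U^{\gamma_F}_{g_\alpha}=U^{\gamma_F}_{\kappa_\alpha}-U^{(\gamma_F)^Y_{\kappa_\alpha}}_{\kappa_\alpha}$: if $U^{\gamma_F}_{g_\alpha}(y)=1$ at some $y\in\Omega$, then, both $\kappa_\alpha$-potentials being $\alpha$-harmonic on $\Omega$ and their difference being $\leqslant1$ on $D$ with equality on $F^r$, the maximum principle (\cite[Theorems~1.1, 1.28]{L}) forces $U^{\gamma_F}_{\kappa_\alpha}=1+U^{(\gamma_F)^Y_{\kappa_\alpha}}_{\kappa_\alpha}$ q.e.\ on all of $D$; by Lusin's continuity theorem this propagates to $\partial D$ and contradicts the boundary relation $U^{\gamma_F}_{\kappa_\alpha}=U^{(\gamma_F)^Y_{\kappa_\alpha}}_{\kappa_\alpha}$ q.e.\ on $Y$, using $c_{\kappa_\alpha}(Y)>0$.

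Your route~(b), by contrast, has a real gap and should be abandoned: from $U^{\mu^F_{g_\alpha}}_{g_\alpha}\leqslant U^\mu_{g_\alpha}$ together with $\mu^F_{g_\alpha}(D)=\mu(D)$ one \emph{cannot} conclude $U^{\mu^F_{g_\alpha}}_{g_\alpha}=U^\mu_{g_\alpha}$ everywhere. Deny's principle \cite[Theorem~4.13]{FZ} only gives the forward implication (potential inequality $\Rightarrow$ mass inequality), not a converse; and the refined version in Theorem~\ref{th-D} requires precisely the hypothesis~(\ref{zeroo}) whose failure you are trying to establish, so invoking it here would be circular.
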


\begin{figure}[htbp]
\begin{center}
\vspace{-.2in}
\hspace{-.1in}\includegraphics[width=4.6in]{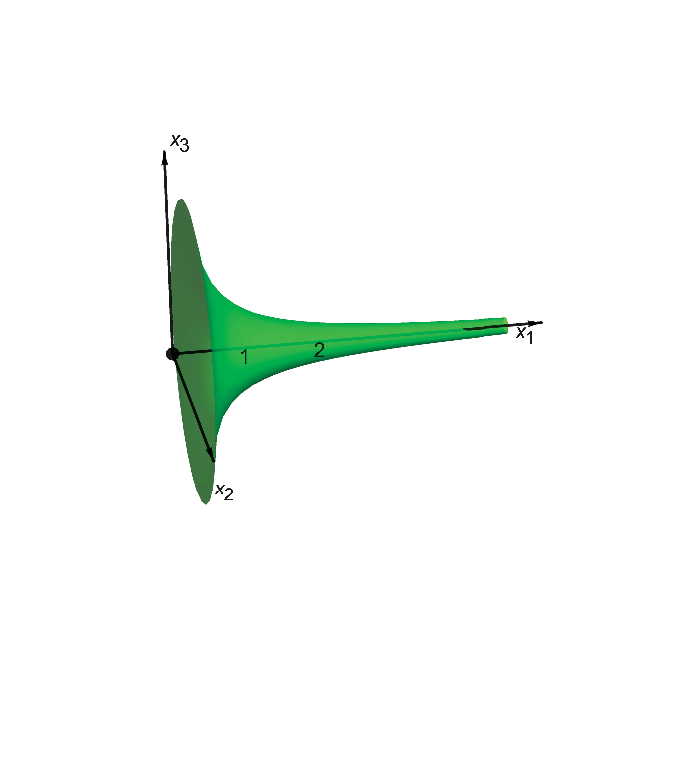}
\vspace{-1.8in}
\caption{The set $F_1$ in Example~\ref{ex} with $\varrho_1(x_1)=1/x_1$.\vspace{-.0in}}
\label{Fig2}
\end{center}
\end{figure}

\begin{example}\label{ex} Let $\alpha=2$ and $D:=\mathbb R^3\setminus Y$, where $Y$ consists of finitely many points in $\{(x_1,x_2,x_3)\in\mathbb R^3: x_1<0\}$. In the domain $D$, consider the rotation bodies
\begin{equation*}F_i:=\bigl\{x\in\mathbb R^3: \ 0\leqslant x_1<\infty, \
x_2^2+x_3^2\leqslant\varrho_i^2(x_1)\bigr\}, \ i=1,2,3,\end{equation*}
where
\begin{align*}
\varrho_1(x_1)&:=x_1^{-s}\text{ \ with\ }s\in[0,\infty),\\
\varrho_2(x_1)&:=\exp(-x_1^s)\text{ \ with\ }s\in(0,1],\\
\varrho_3(x_1)&:=\exp(-x_1^s)\text{ \ with\ }s\in(1,\infty).
\end{align*}
Then $F_1$ is not $2$-thin at infinity (Figure~\ref{Fig2}), $F_2$ is $2$-thin at infinity despite the fact that $c_{g_2}(F_2)=c_{\kappa_2}(F_2)=\infty$ (Figure~\ref{Fig1}),
whereas $F_3$ is $2$-thin at infinity, and moreover $c_{g_2}(F_3)=c_{\kappa_2}(F_3)<\infty$. Thus, according to the latter part of Theorem~\ref{th-eq}, $\gamma_{F_2,g_2}$ and $\gamma_{F_3,g_2}$ do exist (although $\gamma_{F_2,g_2}(F_2)=\infty$), whereas $\gamma_{F_1,g_2}$ fails to exist.
\end{example}

\begin{figure}[htbp]
\begin{center}
\vspace{-.2in}
\hspace{-1.1in}\includegraphics[width=4.0in]{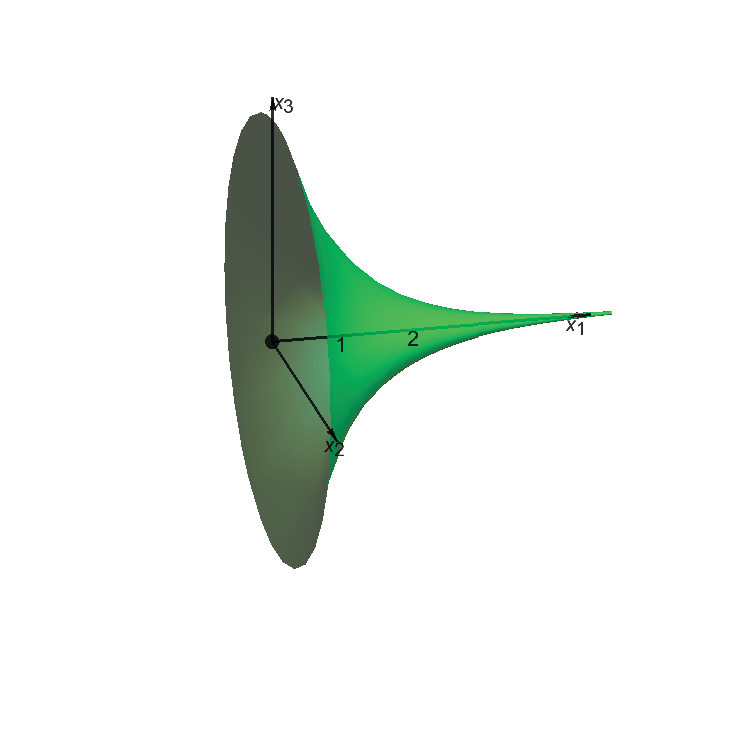}
\vspace{-1.0in}
\caption{The set $F_2$ in Example~\ref{ex} with $\varrho_2(x_1)=\exp(-x_1)$.\vspace{-.1in}}
\label{Fig1}
\end{center}
\end{figure}

\begin{example}\label{Ex}If $\alpha=2$, $D:=B_2$, and $\Omega:=B_{x,1}$, where $x:=(1,0,\ldots,0)$, then
\begin{equation}\label{eq-ex}
\omega_2(x,\overline{\mathbb R^n}\setminus D;\Omega)=0\quad\text{for all $x\in\Omega$},
\end{equation}
and so, by Theorem~\ref{th-eq}, $\gamma_{F,g_2}$ fails to exist, where $F:=D\setminus\Omega$.\end{example}

\begin{open}
If $c_{\kappa_\alpha}(Y)>0$, it is of interest to find sufficient conditions for the existence of the $g_\alpha$-equilibrium measure $\gamma_{F,g_\alpha}$.
\end{open}

We end this section by observing that, if $\gamma_F=\gamma_{F,g_\alpha}$ exists, then so does $\gamma_Q=\gamma_{Q,g_\alpha}$ for any relatively closed $Q\subset F$, and moreover
\begin{equation}\label{EQ'}\gamma_Q=(\gamma_F)^Q_{g_\alpha}.\end{equation}
Indeed, by Theorem~\ref{bal-ex}, $(\gamma_F)^Q_{g_\alpha}\in\breve{\mathfrak M}^+(Q)$ and
$U^{(\gamma_F)^Q_{g_\alpha}}_{g_\alpha}=U^{\gamma_F}_{g_\alpha}=1$ q.e.\ on $Q$, whence (\ref{EQ'}) in consequence of Lemma~\ref{eq-un}.

\section{A refined version of Deny's principle of positivity of mass}\label{sec-Deny}

By Fuglede and Zorii \cite[Theorem~4.13]{FZ}, for any $\mu,\nu\in\mathfrak M^+(D)$ such that
\[U_{g_\alpha}^\mu\leqslant U_{g_\alpha}^\nu\quad\text{on $D$},\] it holds true that $\mu(D)\leqslant\nu(D)$. (If $D=\mathbb R^n$ and $\alpha=2$, this goes back to Deny \cite{D2}.)

Under some additional requirements on the measures in question, the quoted result from \cite{FZ} can be refined as follows.

\begin{theorem}\label{th-D} Let $F\subset D$ be a relatively closed set having the property
\begin{equation}\label{zeroo}
\omega_\alpha(x,\overline{\mathbb R^n}\setminus D;\Omega)=0\quad\text{for all $x\in\Omega$}.
\end{equation}
Then for any $\mu,\nu\in\mathfrak M^+(D)$ such that $\mu|_F\in\breve{\mathfrak M}^+(F)$ and\footnote{If $D=\mathbb R^n$, then the requirement $\mu|_F\in\breve{\mathfrak M}^+(F)$ as well as that of the closedness of $F$ is superfluous, see the author's recent result \cite[Theorem~1.2]{Z-Deny}.}
\begin{equation}\label{insigma}
 U_{g_\alpha}^\mu\leqslant U_{g_\alpha}^\nu\quad\text{q.e.\ on $F$},
\end{equation}
we have
\begin{equation*}
\mu(D)\leqslant\nu(D).
\end{equation*}
\end{theorem}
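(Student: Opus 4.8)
The plan is to derive the inequality from two facts already established in the preceding sections: the extremal description of the total mass of a $g_\alpha$-swept measure furnished by Corollary~\ref{cor-mass}, and the mass-conservation criterion of Theorem~\ref{th-balM2}. First I would make sure that the $g_\alpha$-balayage $\mu^F_{g_\alpha}$ is meaningful under the present hypotheses: since $\mu|_F\in\breve{\mathfrak M}^+(F)$, its further trace $\mu|_{F^r}=(\mu|_F)|_{F^r}$ is still $c_{\kappa_\alpha}$-absolutely continuous, so Theorem~\ref{bal-ex} does apply and produces $\mu^F_{g_\alpha}\in\breve{\mathfrak M}^+(F)$.

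The second step is to note that the comparison measure $\nu$ belongs to the class $\Gamma_{g_\alpha}(\mu,F;D)$ of (\ref{gamma}): indeed, $\nu\in\mathfrak M^+(D)$, and hypothesis (\ref{insigma}) says precisely that $U_{g_\alpha}^\nu\geqslant U_{g_\alpha}^\mu$ q.e.\ on $F$. Consequently, by Corollary~\ref{cor-mass},
\[
\mu^F_{g_\alpha}(D)=\min_{\theta\in\Gamma_{g_\alpha}(\mu,F;D)}\theta(D)\leqslant\nu(D).
\]

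It remains to identify $\mu^F_{g_\alpha}(D)$ with $\mu(D)$, and here the hypothesis (\ref{zeroo}) enters. If $\mu\ne\mu|_F$, then, since (\ref{zeroo}) is verbatim assertion~(ii) of Theorem~\ref{th-balM2} while $\mu|_F\in\breve{\mathfrak M}^+(F)\subset\breve{\mathfrak M}^+$, the equivalence (i)$\iff$(ii) of that theorem yields $\mu^F_{g_\alpha}(D)=\mu(D)$. If instead $\mu=\mu|_F$, then $\mu$ itself is a member of $\breve{\mathfrak M}^+(F)$ satisfying (trivially) $U_{g_\alpha}^\mu=U_{g_\alpha}^\mu$ on $F^r$, so the uniqueness clause of Theorem~\ref{bal-ex} forces $\mu^F_{g_\alpha}=\mu$ and hence again $\mu^F_{g_\alpha}(D)=\mu(D)$. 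Combining this with the displayed inequality gives $\mu(D)=\mu^F_{g_\alpha}(D)\leqslant\nu(D)$, which is the assertion.

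Thus almost all of the work is already done: the balayage theory of Sections~\ref{sec1}--\ref{how} does the heavy lifting, and no genuinely new potential-theoretic estimate is required. The one spot that calls for care — and what I would regard as the only real obstacle — is the case distinction at the last step, since Theorem~\ref{th-balM2} is phrased with the extra proviso $\mu\ne\mu|_F$; one has to dispose separately of measures that are already concentrated on $F$, observing that on such measures the $g_\alpha$-balayage acts as the identity so that the mass is trivially preserved. Beyond that, the proof is essentially bookkeeping.
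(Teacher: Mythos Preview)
Your proof is correct and follows essentially the same route as the paper's own argument: use the extremal characterization of $\mu^F_{g_\alpha}$ (the paper invokes Theorem~\ref{th-char} together with \cite[Theorem~4.13]{FZ}, which is exactly what Corollary~\ref{cor-mass} packages) to get $\mu^F_{g_\alpha}(D)\leqslant\nu(D)$, and then Theorem~\ref{th-balM2} to get $\mu^F_{g_\alpha}(D)=\mu(D)$. Your explicit treatment of the case $\mu=\mu|_F$ is in fact more careful than the paper, which silently applies Theorem~\ref{th-balM2}(i) without separating off this degenerate situation.
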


Assumption (\ref{zeroo}) is important for the validity of Theorem~\ref{th-D}, which is seen from the following Theorem~\ref{th-D'}.

\begin{theorem}\label{th-D'} If {\rm(\ref{zeroo})} fails to hold, then there exist $\mu_0,\nu_0\in\mathfrak M^+(\Omega)$ such that  $U_{g_\alpha}^{\mu_0}\leqslant U_{g_\alpha}^{\nu_0}$ q.e.\ on $F$, but nevertheless, $\mu_0(D)>\nu_0(D)$.\end{theorem}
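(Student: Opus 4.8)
The plan is to negate the hypothesis and extract from it a point and a measure witnessing failure of mass-positivity. Since (\ref{zeroo}) fails, there exists $x_0\in\Omega$ with
\[
\omega_\alpha(x_0,\overline{\mathbb R^n}\setminus D;\Omega)>0 .
\]
Set $\mu_0:=\varepsilon_{x_0}$. This is a nonzero measure in $\mathfrak M^+(\Omega)$ with $\mu_0|_F=0\in\breve{\mathfrak M}^+(F)$, so the $g_\alpha$-balayage $(\varepsilon_{x_0})^F_{g_\alpha}$ is well defined by Theorem~\ref{bal-ex}. The natural candidate for $\nu_0$ is $(\varepsilon_{x_0})^F_{g_\alpha}$ itself. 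By the defining property (\ref{bal1'}) we have $U_{g_\alpha}^{\nu_0}=U_{g_\alpha}^{\mu_0}$ on $F^r$, hence $U_{g_\alpha}^{\mu_0}\leqslant U_{g_\alpha}^{\nu_0}$ q.e.\ on $F$ (since $F^i$ has $\kappa_\alpha$-capacity zero, equivalently $g_\alpha$-capacity zero by (\ref{iff})); in fact equality holds q.e. So the potential inequality required by the statement is automatic. What remains is the strict mass inequality $\mu_0(D)>\nu_0(D)$, i.e.\ $(\varepsilon_{x_0})^F_{g_\alpha}(D)<1$.

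The key step is therefore to identify $(\varepsilon_{x_0})^F_{g_\alpha}(D)$ with a harmonic-measure quantity and show it is strictly less than $1$. By (\ref{bal2}), $(\varepsilon_{x_0})^F_{g_\alpha}=(\varepsilon_{x_0})^{F\cup Y}_{\kappa_\alpha}\big|_F$, and since $\Omega^c=F\cup Y$ and $x_0\in\Omega$, the total mass is
\[
(\varepsilon_{x_0})^F_{g_\alpha}(D)=(\varepsilon_{x_0})^{\Omega^c}_{\kappa_\alpha}(F)
=(\varepsilon_{x_0})^{\Omega^c}_{\kappa_\alpha}(\mathbb R^n)-(\varepsilon_{x_0})^{\Omega^c}_{\kappa_\alpha}(Y)
=1-\omega_\alpha(x_0,\{\infty_{\mathbb R^n}\};\Omega)-\omega_\alpha(x_0,Y;\Omega),
\]
using (\ref{def-h})–(\ref{Def}) (here one uses $(\varepsilon_{x_0})^{\Omega^c}_{\kappa_\alpha}(Y)=\omega_\alpha(x_0,Y;\Omega)$, valid since the balayage is $c_{\kappa_\alpha}$-absolutely continuous and $Y$ is disjoint from $F$ up to a set where it does not matter). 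Thus $(\varepsilon_{x_0})^F_{g_\alpha}(D)=1-\omega_\alpha(x_0,\overline{\mathbb R^n}\setminus D;\Omega)$, and the assumed positivity of $\omega_\alpha(x_0,\overline{\mathbb R^n}\setminus D;\Omega)$ gives exactly $(\varepsilon_{x_0})^F_{g_\alpha}(D)<1=\varepsilon_{x_0}(D)$, as desired.

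I expect the main obstacle to be purely bookkeeping: making sure the restriction-to-$F$ in (\ref{bal2}) does not discard mass that belongs to $\overline{\mathbb R^n}\setminus D$ in an uncontrolled way, i.e.\ correctly accounting for the three pieces—mass landing on $F$, mass landing on $Y=D^c$, and mass "escaping to $\infty_{\mathbb R^n}$"—and checking that $S\bigl((\varepsilon_{x_0})^{\Omega^c}_{\kappa_\alpha}\bigr)\subset\Omega^c$ together with $c_{\kappa_\alpha}$-absolute continuity (Theorem~\ref{th-balR}) lets one split the total mass cleanly along the Borel partition $\Omega^c=F\sqcup(Y\setminus F)$ up to a $\kappa_\alpha$-null set. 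One should also note that the standing simplifying assumption (either $\alpha<2$ or $\Omega$ connected) is in force, so the relevant statements about $\omega_\alpha$ apply. With these identifications in hand the argument is immediate; no compactness or limiting procedure is needed, since $\varepsilon_{x_0}$ is the extremal example and its balayage is governed entirely by the $\kappa_\alpha$-balayage onto $\Omega^c$.
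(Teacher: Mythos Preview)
Your proof is correct and follows essentially the same strategy as the paper: take $\nu_0$ to be the $g_\alpha$-balayage of $\mu_0$ onto $F$, so that $U_{g_\alpha}^{\mu_0}=U_{g_\alpha}^{\nu_0}$ q.e.\ on $F$ by (\ref{bal1'}), and then exploit the failure of (\ref{zeroo}) to get the strict mass drop. The only difference is that the paper cites Theorem~\ref{th-balM2} as a black box to produce some $\mu_0\in\mathfrak M^+(\Omega)$ with $(\mu_0)^F_{g_\alpha}(D)<\mu_0(D)$, whereas you make the explicit choice $\mu_0=\varepsilon_{x_0}$ and compute $(\varepsilon_{x_0})^F_{g_\alpha}(D)=1-\omega_\alpha(x_0,\overline{\mathbb R^n}\setminus D;\Omega)$ directly from (\ref{bal2}) and (\ref{def-h})--(\ref{Def}); this is exactly the special case of the computation carried out inside the proof of Theorem~\ref{th-balM2}, so nothing is genuinely new, just more self-contained.
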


\begin{example}Let $\alpha=2$, $D:=B_2$, and $\Omega:=B_{x,1}$, where $x:=(1,0,\ldots,0)$. Then $\mu(D)\leqslant\nu(D)$ for any $\mu,\nu\in\breve{\mathfrak M}^+(D)$ such that $U_{g_2}^\mu\leqslant U_{g_2}^\nu$ q.e.\ on $F:=D\setminus\Omega$. Indeed, on account of (\ref{eq-ex}), we deduce this from Theorem~\ref{th-D}.\end{example}

\begin{example}
In the assumptions and notations of Example~\ref{ex}, we note from Corollary~\ref{cor-balM0} that
$\omega_2(x,\overline{\mathbb R^3}\setminus D;D\setminus F_1)=0$ for all $x\in D\setminus F_1$,
$F_1$ not being $2$-thin at infinity. Thus, according to Theorem~\ref{th-D}, $\mu(D)\leqslant\nu(D)$ for any $\mu,\nu\in\mathfrak M^+(D\setminus F_1)$ such that $U_{g_2}^\mu\leqslant U_{g_2}^\nu$ q.e.\ on $F_1$. However, according to Theorem~\ref{th-D'}, this conclusion fails to hold for any of $F_2$ or $F_3$, those sets being $2$-thin at infinity.
\end{example}

\section{Proofs}\label{sec-proofs}

\subsection{Preliminaries} Fix a closed set $Q\subset\mathbb R^n$ and a measure $\nu\in\mathfrak M^+$ such that $\nu|_{Q^r}$ is $c_{\kappa_\alpha}$-ab\-solutely continuous, i.e.\ $\nu|_{Q^r}\in\breve{\mathfrak M}^+(Q^r)$. Then the concept of $\kappa_\alpha$-balayage $\nu^Q_{\kappa_\alpha}$, introduced by means of the symmetry relation (\ref{sym}), can be specified as follows.

\begin{theorem}\label{th-balR}For these $Q$ and $\nu$, the $\kappa_\alpha$-balayage $\nu^Q_{\kappa_\alpha}$ is the only measure in the class $\breve{\mathfrak M}^+(Q)$ having the property $U_{\kappa_\alpha}^{\nu^Q_{\kappa_\alpha}}=U_{\kappa_\alpha}^\nu$ q.e.\ on $Q$, or equivalently
\begin{equation}\label{balR}
U_{\kappa_\alpha}^{\nu^Q_{\kappa_\alpha}}(x)=U_{\kappa_\alpha}^\nu(x)\quad\text{for all $x\in Q^r$}.
\end{equation}
\end{theorem}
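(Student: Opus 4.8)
The plan is to reconcile the abstract definition of $\nu^Q_{\kappa_\alpha}$ — as the unique element of $\mathfrak M^+$ satisfying the symmetry relation (\ref{sym}), equivalently (\ref{sym'}), which exists and is concentrated on $Q$ by \cite[Theorems~3.1,~3.8, Definition~3.4]{FZ} — with the potential-theoretic description. Granting this, three points remain to be verified: (a) $\nu^Q_{\kappa_\alpha}$ is $c_{\kappa_\alpha}$-absolutely continuous, which is where the hypothesis $\nu|_{Q^r}\in\breve{\mathfrak M}^+(Q^r)$ is needed (and cannot be dispensed with); (b) $\nu^Q_{\kappa_\alpha}$ satisfies (\ref{balR}), hence a fortiori $U_{\kappa_\alpha}^{\nu^Q_{\kappa_\alpha}}=U_{\kappa_\alpha}^\nu$ q.e.\ on $Q$; (c) $\nu^Q_{\kappa_\alpha}$ is the only element of $\breve{\mathfrak M}^+(Q)$ enjoying this last (weaker) property, which simultaneously shows that the two characterizations in the statement are equivalent.

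Point (b) is immediate. For $y\in Q^r$ we have $(\varepsilon_y)^Q_{\kappa_\alpha}=\varepsilon_y$ \cite[Section~6]{Z-bal}, so (\ref{sym'}) applied to the pair $\nu,\varepsilon_y$ gives
\[
U_{\kappa_\alpha}^{\nu^Q_{\kappa_\alpha}}(y)=I_{\kappa_\alpha}\bigl(\nu^Q_{\kappa_\alpha},\varepsilon_y\bigr)
=I_{\kappa_\alpha}\bigl(\nu,(\varepsilon_y)^Q_{\kappa_\alpha}\bigr)=I_{\kappa_\alpha}(\nu,\varepsilon_y)=U_{\kappa_\alpha}^\nu(y),
\]
i.e.\ (\ref{balR}). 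Since $Q\setminus Q^r$ has $\kappa_\alpha$-capacity zero by the Kellogg--Evans type theorem \cite[Theorem~6.6]{Z-bal}, (\ref{balR}) is equivalent to the q.e.\ identity on $Q$.

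For point (a), I would invoke the integral representation $\nu^Q_{\kappa_\alpha}=\int(\varepsilon_x)^Q_{\kappa_\alpha}\,d\nu(x)$, valid by \cite[Theorem~5.1]{Z-bal2} since $x\mapsto(\varepsilon_x)^Q_{\kappa_\alpha}$ is scalar essentially $\nu$-integrable, hence $\nu$-adequate (exactly as in the proof of Corollary~\ref{cor0}). Fix a Borel $B$ with $c_{\kappa_\alpha}(B)=0$ and split the representing integral at $Q^r$: over $Q^r$ one has $(\varepsilon_x)^Q_{\kappa_\alpha}=\varepsilon_x$, so the corresponding contribution to $\nu^Q_{\kappa_\alpha}(B)$ equals $\nu|_{Q^r}(B)$, which vanishes by the hypothesis; over the complement $Q^c\cup(Q\setminus Q^r)$ the balayage $(\varepsilon_x)^Q_{\kappa_\alpha}$ is itself $c_{\kappa_\alpha}$-absolutely continuous — the $\kappa_\alpha$-balayage of $\varepsilon_x$ onto $Q$ charges no set of $\kappa_\alpha$-capacity zero whenever $x$ is not a $\kappa_\alpha$-regular point of $Q$, see \cite{L,Z-bal2} — so that contribution vanishes too. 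Hence $\nu^Q_{\kappa_\alpha}(B)=0$, i.e.\ $\nu^Q_{\kappa_\alpha}\in\breve{\mathfrak M}^+(Q)$.

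For point (c), let $\lambda\in\breve{\mathfrak M}^+(Q)$ satisfy $U_{\kappa_\alpha}^\lambda=U_{\kappa_\alpha}^\nu$ q.e.\ on $Q$. For any $\sigma\in\mathcal E^+_{\kappa_\alpha}$, using that $U_{\kappa_\alpha}^\sigma=U_{\kappa_\alpha}^{\sigma^Q_{\kappa_\alpha}}$ q.e.\ on $Q$ together with the $c_{\kappa_\alpha}$-absolute continuity and $Q$-concentration of $\lambda$ and $\sigma^Q_{\kappa_\alpha}$, then the symmetry of mutual energy, and finally the hypothesis on $\lambda$, one obtains
\[
I_{\kappa_\alpha}(\lambda,\sigma)=\int U_{\kappa_\alpha}^{\sigma^Q_{\kappa_\alpha}}\,d\lambda
=\int U_{\kappa_\alpha}^\lambda\,d\sigma^Q_{\kappa_\alpha}=\int U_{\kappa_\alpha}^\nu\,d\sigma^Q_{\kappa_\alpha}
=I_{\kappa_\alpha}\bigl(\nu,\sigma^Q_{\kappa_\alpha}\bigr).
\]
Thus $\lambda$ fulfils the relation (\ref{sym}) that determines $\nu^Q_{\kappa_\alpha}$ uniquely in $\mathfrak M^+$, so $\lambda=\nu^Q_{\kappa_\alpha}$. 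I expect (a) to be the main obstacle: for $\nu$ of finite $\kappa_\alpha$-energy the $c_{\kappa_\alpha}$-absolute continuity of $\nu^Q_{\kappa_\alpha}$ is automatic and parts (b)--(c) reduce to orthogonal-projection considerations, whereas for general $\nu$ one is forced through the integral representation and must rely on the precise fact that $(\varepsilon_x)^Q_{\kappa_\alpha}$ fails to be $c_{\kappa_\alpha}$-absolutely continuous exactly at the points of $Q^r$ — which explains why the hypothesis is imposed on $\nu|_{Q^r}$ and not on $\nu$ itself.
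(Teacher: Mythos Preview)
Your argument is correct. Points (a) and (b) track the paper's proof closely: the paper obtains (\ref{balR}) from (\ref{reg}), which is itself derived from $(\varepsilon_y)^Q_{\kappa_\alpha}=\varepsilon_y$ for $y\in Q^r$ via (\ref{sym'}), exactly as you do; and for the $c_{\kappa_\alpha}$-absolute continuity the paper simply cites \cite[Corollary~5.2]{Z-bal2}, whose content you have effectively reconstructed through the integral representation and the dichotomy regular/irregular.

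The one genuine divergence is in the uniqueness step (c). The paper invokes the uniqueness theorem for $\kappa_\alpha$-potentials in the form of \cite[p.~178, Remark]{L}: two $c_{\kappa_\alpha}$-absolutely continuous measures whose potentials agree q.e.\ must coincide. You instead feed the candidate $\lambda$ back into the defining symmetry relation (\ref{sym}) and appeal to the uniqueness built into that definition. Both routes are valid; yours stays internal to the machinery already set up in the paper (the symmetry relation and the absolute continuity of $\sigma^Q_{\kappa_\alpha}$), while the paper's route is shorter but imports an external uniqueness principle. Your approach has the minor advantage of making transparent exactly where each hypothesis on $\lambda$ --- concentration on $Q$ and $c_{\kappa_\alpha}$-absolute continuity --- is used.
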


\begin{proof} Relation (\ref{balR}) is clear from (\ref{reg}), while $\nu^Q_{\kappa_\alpha}\in\breve{\mathfrak M}^+(Q)$ is implied by $\nu|_{Q^r}\in\breve{\mathfrak M}^+$, see \cite[Corollary~5.2]{Z-bal2}. To complete the proof, it remains to utilize the uniqueness theorem for $\kappa_\alpha$-potentials in the form presented in \cite[p.~178, Remark]{L}.\end{proof}

\begin{remark}\label{rem2} Theorem~\ref{th-balR} fails to hold if we drop our limitation to the $c_{\kappa_\alpha}$-ab\-sol\-utely continuous measures, see Remark~\ref{rem1} with $D:=\mathbb R^n$ and $F:=Q$.\end{remark}

\begin{lemma}\label{l-FZ}
For any $\zeta\in\mathfrak M^+(D)$, the $g_\alpha$-potential $U_{g_\alpha}^\zeta$ is finite q.e.\ on $\mathbb R^n$, and given by
\[U_{g_\alpha}^\zeta(x)=U_{\kappa_\alpha}^\zeta(x)-U_{\kappa_\alpha}^{\zeta^Y_{\kappa_\alpha}}(x),\quad x\in\mathbb R^n.\]
\end{lemma}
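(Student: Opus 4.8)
The plan is to decompose the $g_\alpha$-potential directly via the definition of the $\alpha$-Green kernel and then invoke Fubini's theorem. Recall from \eqref{g} that $g_\alpha(x,y) = U_{\kappa_\alpha}^{\varepsilon_x}(y) - U_{\kappa_\alpha}^{(\varepsilon_x)^Y_{\kappa_\alpha}}(y)$ for $(x,y)\in D\times\mathbb R^n$. Integrating in $y$ against $\zeta\in\mathfrak M^+(D)$ gives, formally,
\[
U_{g_\alpha}^\zeta(x) = \int g_\alpha(x,y)\,d\zeta(y) = \int U_{\kappa_\alpha}^{\varepsilon_x}(y)\,d\zeta(y) - \int U_{\kappa_\alpha}^{(\varepsilon_x)^Y_{\kappa_\alpha}}(y)\,d\zeta(y) = U_{\kappa_\alpha}^\zeta(x) - \int U_{\kappa_\alpha}^{(\varepsilon_x)^Y_{\kappa_\alpha}}(y)\,d\zeta(y),
\]
the last equality using the symmetry of $\kappa_\alpha$ and Fubini. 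The remaining term should be identified as $U_{\kappa_\alpha}^{\zeta^Y_{\kappa_\alpha}}(x)$: using the symmetry relation \eqref{sym'} extended to arbitrary measures together with the scalar integral representation $\zeta^Y_{\kappa_\alpha} = \int (\varepsilon_y)^Y_{\kappa_\alpha}\,d\zeta(y)$ (valid by the $\zeta$-adequacy argument already invoked in the proof of Corollary~\ref{cor0}), one obtains $\int U_{\kappa_\alpha}^{(\varepsilon_x)^Y_{\kappa_\alpha}}(y)\,d\zeta(y) = \int U_{\kappa_\alpha}^{(\varepsilon_y)^Y_{\kappa_\alpha}}(x)\,d\zeta(y) = U_{\kappa_\alpha}^{\zeta^Y_{\kappa_\alpha}}(x)$, which is exactly the claimed formula.

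First I would pin down the finiteness statement, which is the safest entry point. Since $\zeta\in\mathfrak M^+(D)$ extends by $0$ to an element of $\mathfrak M^+$, the standing convention (the paragraph around \eqref{intf}) guarantees $U_{\kappa_\alpha}^\zeta$ is finite q.e.\ on $\mathbb R^n$. The swept measure $\zeta^Y_{\kappa_\alpha}\in\mathfrak M^+(Y)$ satisfies the same integrability condition \eqref{intf} (balayage does not increase the relevant integral, by the inequality $(\varepsilon_x)^Y_{\kappa_\alpha}(\mathbb R^n)\le 1$ and an estimate on the potential at infinity — or more simply one cites that $\zeta^Y_{\kappa_\alpha}$ has a well-defined potential in the theory of $\kappa_\alpha$-balayage), so $U_{\kappa_\alpha}^{\zeta^Y_{\kappa_\alpha}}$ is also finite q.e.\ Hence $U_{g_\alpha}^\zeta = U_{\kappa_\alpha}^\zeta - U_{\kappa_\alpha}^{\zeta^Y_{\kappa_\alpha}}$ is finite at every point where both finite potentials are finite, i.e.\ q.e.\ on $\mathbb R^n$. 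One should note for free that $U_{\kappa_\alpha}^{\zeta^Y_{\kappa_\alpha}}\le U_{\kappa_\alpha}^\zeta$ everywhere (by the $\kappa_\alpha$-domination/maximum principle, or by \eqref{sym'} tested against Dirac-type limits), so the difference is well-defined as a nonnegative quantity wherever the subtraction makes sense; but since we already know q.e.\ finiteness, subtraction is legitimate there.

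The key technical steps, in order: (1) justify the pointwise decomposition of $g_\alpha(x,\cdot)$ and the application of Tonelli/Fubini to move the $\zeta$-integral inside — here all integrands are nonnegative, so Tonelli applies with no integrability hypothesis, giving the identity for $U_{g_\alpha}^\zeta(x)$ as an equality in $[0,+\infty]$ at every $x\in D$; (2) use symmetry of $\kappa_\alpha$ to rewrite $\int U_{\kappa_\alpha}^{\varepsilon_x}(y)\,d\zeta(y) = U_{\kappa_\alpha}^\zeta(x)$; (3) establish $\int U_{\kappa_\alpha}^{(\varepsilon_x)^Y_{\kappa_\alpha}}(y)\,d\zeta(y) = U_{\kappa_\alpha}^{\zeta^Y_{\kappa_\alpha}}(x)$ via the $\mu$-adequacy of $y\mapsto(\varepsilon_y)^Y_{\kappa_\alpha}$ and Bourbaki's integration-of-measures machinery, exactly as in Corollary~\ref{cor0}, combined with the symmetry of $\kappa_\alpha$; (4) conclude the formula holds for all $x\in D$, and then extend it to all $x\in\mathbb R^n$ — for $x\in Y=D^c$ one has $(\varepsilon_x)^Y_{\kappa_\alpha}=\varepsilon_x$ at $\kappa_\alpha$-regular points and a short separate argument (or continuity/lower-semicontinuity of potentials) handles the rest, but in fact since $g_\alpha$ was only defined on $D\times\mathbb R^n$ the natural reading is that $U_{g_\alpha}^\zeta(x)$ for $x\in Y$ is \emph{defined} by the right-hand side, so step (4) is a matter of convention. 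The main obstacle is step (3): one must be careful that the scalar essential $\zeta$-integrability of $y\mapsto(\varepsilon_y)^Y_{\kappa_\alpha}$ (cited from \cite[p.~464]{Z-bal2}) together with $\zeta$-adequacy genuinely licenses both the integral representation $\zeta^Y_{\kappa_\alpha}=\int(\varepsilon_y)^Y_{\kappa_\alpha}\,d\zeta(y)$ \cite[Theorem~5.1]{Z-bal2} and the interchange $\int\bigl(\int\kappa_\alpha(x,z)\,d(\varepsilon_y)^Y_{\kappa_\alpha}(z)\bigr)d\zeta(y)=\int\kappa_\alpha(x,z)\,d\zeta^Y_{\kappa_\alpha}(z)$ via \cite[Section~V.3, Proposition~5]{B2}; once those citations are in place the computation is routine. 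Everything else is bookkeeping with nonnegative integrands.
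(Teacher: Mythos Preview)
The paper does not prove this lemma; it simply cites \cite[Lemma~4.4]{FZ}. Your argument is the natural direct computation and is correct. One streamlining worth noting: your step~(3) follows in a single stroke from \eqref{sym'} with $\mu=\varepsilon_x$ and $\nu=\zeta$, namely
\[
\int U_{\kappa_\alpha}^{(\varepsilon_x)^Y_{\kappa_\alpha}}\,d\zeta \;=\; I_{\kappa_\alpha}\bigl((\varepsilon_x)^Y_{\kappa_\alpha},\zeta\bigr) \;=\; I_{\kappa_\alpha}\bigl(\varepsilon_x,\zeta^Y_{\kappa_\alpha}\bigr) \;=\; U_{\kappa_\alpha}^{\zeta^Y_{\kappa_\alpha}}(x),
\]
so the detour through the pointwise identity $U_{\kappa_\alpha}^{(\varepsilon_x)^Y_{\kappa_\alpha}}(y)=U_{\kappa_\alpha}^{(\varepsilon_y)^Y_{\kappa_\alpha}}(x)$ and the integral-representation/ad\-e\-quacy machinery from Corollary~\ref{cor0} is unnecessary here.
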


\begin{proof}
 See \cite[Lemma~4.4]{FZ}.
\end{proof}

\begin{theorem}\label{th-dom}
For any $\zeta,\theta\in\mathfrak M^+(D)$ such that $\zeta$ is $c_{\kappa_\alpha}$-absolutely continuous and
\[U_{g_\alpha}^\zeta\leqslant U_{g_\alpha}^\theta\quad\text{$\zeta$-a.e.},\]
the same inequality holds true on all of $D$.
\end{theorem}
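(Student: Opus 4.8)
The statement to prove is Theorem~\ref{th-dom}: the domination principle for $g_\alpha$-potentials phrased with a $c_{\kappa_\alpha}$-absolutely continuous measure $\zeta$, where the inequality $U_{g_\alpha}^\zeta \le U_{g_\alpha}^\theta$ is only assumed $\zeta$-a.e.\ and one concludes it holds on all of $D$.

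The plan is to reduce to the strong form of the domination principle already recorded in the excerpt, namely the result from \cite[Theorem~4.6]{FZ} stating: for any $\xi\in\mathcal E^+_{g_\alpha}$, any extendible $\mu\in\mathfrak M^+(D)$, and any positive $\alpha$-superharmonic $\psi$ on $\mathbb R^n$ with $U^\xi_{g_\alpha}\le U^\mu_{g_\alpha}+\psi$ $\xi$-a.e., the same inequality holds on all of $D$. Here I would take $\psi\equiv 0$ (the zero function is positive and $\alpha$-superharmonic). The obstacle is that $\zeta$ need not have finite $g_\alpha$-energy — it is only $c_{\kappa_\alpha}$-absolutely continuous — whereas the quoted result requires $\xi\in\mathcal E^+_{g_\alpha}$. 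So the main work is an exhaustion/truncation argument to pass from finite-energy pieces to the general $\zeta$.

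\textbf{Step 1 (exhaustion).} First I would write $\zeta$ as an increasing limit of finite-energy measures. Since $\zeta\in\mathfrak M^+(D)$, choose an increasing sequence of compact sets $K_m\subset D$ with $\bigcup_m K_m = D$, and set $\zeta_m := \zeta|_{K_m}$. Each $\zeta_m$ is a measure of compact support in $D$; on a compact subset of $D$ the kernel $g_\alpha$ is bounded, so $I_{g_\alpha}(\zeta_m)<\infty$ provided $\zeta_m$ is bounded — which it is, being a restriction of a Radon measure to a compact set. Hence $\zeta_m\in\mathcal E^+_{g_\alpha}$. Moreover $U_{g_\alpha}^{\zeta_m}\uparrow U_{g_\alpha}^\zeta$ pointwise on $\mathbb R^n$ by monotone convergence, and $\zeta_m\le\zeta$ so each $\zeta_m$ is again $c_{\kappa_\alpha}$-absolutely continuous.

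\textbf{Step 2 (fix the a.e.\ inequality for each piece).} The subtlety is that we know $U_{g_\alpha}^\zeta\le U_{g_\alpha}^\theta$ only $\zeta$-a.e., and we need an analogous statement for $\zeta_m$. Since $\zeta_m\le\zeta$, the inequality holds $\zeta_m$-a.e.\ as well; combined with $U_{g_\alpha}^{\zeta_m}\le U_{g_\alpha}^\zeta$ everywhere, we get $U_{g_\alpha}^{\zeta_m}\le U_{g_\alpha}^\theta$ $\zeta_m$-a.e. Now apply the strong domination principle \cite[Theorem~4.6]{FZ} with $\xi:=\zeta_m\in\mathcal E^+_{g_\alpha}$, $\mu:=\theta$, and $\psi\equiv0$: we conclude $U_{g_\alpha}^{\zeta_m}\le U_{g_\alpha}^\theta$ on all of $D$. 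Letting $m\to\infty$ and using $U_{g_\alpha}^{\zeta_m}\uparrow U_{g_\alpha}^\zeta$ yields $U_{g_\alpha}^\zeta\le U_{g_\alpha}^\theta$ everywhere on $D$, as desired. The role of the hypothesis $\zeta\in\breve{\mathfrak M}^+$ (used only through its passage to $\zeta_m$) is mild here, but it is exactly what guarantees that the $\zeta$-a.e.\ hypothesis is not vacuous on a capacity-zero carrier and that Theorem~4.6 is being applied to the correct object; in any event it is inherited by each $\zeta_m$, so no extra care is needed.

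\textbf{Expected main obstacle.} The one point requiring attention is verifying that $\zeta_m$ genuinely lies in $\mathcal E^+_{g_\alpha}$ — i.e.\ that restricting to a compact $K_m\subset D$ and using boundedness of $g_\alpha$ on $K_m\times K_m$ really gives finite energy — and that the monotone passage to the limit is legitimate for the potentials (which it is, by the Beppo Levi theorem, since $g_\alpha\ge0$). If one prefers to avoid even this, an alternative is to note that the assertion is a verbatim specialization of \cite[Theorem~4.6]{FZ} once one observes that the finite-energy hypothesis there can be removed precisely by the exhaustion above; this is the route I would take, and it is short. Indeed the author's proof in the excerpt's source will almost certainly just cite Theorem~4.6 and Lemma~\ref{l-FZ}, so I expect the ``proof'' to be essentially one or two lines invoking those.
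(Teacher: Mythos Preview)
Your exhaustion argument has a genuine gap in Step~1. You claim that ``on a compact subset of $D$ the kernel $g_\alpha$ is bounded,'' and conclude that $\zeta_m:=\zeta|_{K_m}$ has finite $g_\alpha$-energy. This is false: $g_\alpha(x,y)=|x-y|^{\alpha-n}-U_{\kappa_\alpha}^{(\varepsilon_x)_{\kappa_\alpha}^Y}(y)$ has the same diagonal singularity as the Riesz kernel, so $g_\alpha$ is unbounded on $K_m\times K_m$ whenever $K_m$ contains more than one point. Consequently, a compactly supported $c_{\kappa_\alpha}$-absolutely continuous measure can perfectly well have infinite $g_\alpha$-energy (take, e.g., a measure living on a set of Hausdorff dimension strictly between $n-\alpha$ and $n$), and you cannot invoke \cite[Theorem~4.6]{FZ} for $\zeta_m$. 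The black-box reduction you propose therefore does not go through as written, and there is no obvious repair by a different exhaustion: the gap between ``$c_{\kappa_\alpha}$-absolutely continuous'' and ``finite energy'' is exactly the content of the theorem.

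The paper does not attempt a reduction of this kind. For $\alpha=2$ it simply cites the classical domination principle (Armitage--Gardiner \cite[Theorem~5.1.11]{AG}, Doob \cite[Theorem~1.V.10]{Doob}), and for $\alpha<2$ it says the result follows by a \emph{modification of the proof} of \cite[Theorem~4.6]{FZ} (with \cite[Theorem~1.29]{L} covering $D=\mathbb R^n$). In other words, one must reopen that argument and check that wherever finite energy is used, $c_{\kappa_\alpha}$-absolute continuity of $\zeta$ suffices; the theorem is not obtained as a corollary of the finite-energy statement.
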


\begin{proof}
For $\alpha=2$, see Armitage and Gardiner \cite[Theorem~5.1.11]{AG} and Doob \cite[Theorem~1.V.10]{Doob}. For $\alpha<2$, this follows by a slight modification of Fuglede and Zorii \cite[Proof of Theorem~4.6]{FZ} (for $D=\mathbb R^n$, see Landkof \cite[Theorem~1.29]{L}).\footnote{See also Bogdan, Jarohs, and Kania \cite[Lemma~1.34]{Bogdan2} and Bogdan and Hansen \cite[Lemma~3.5(ii)]{BHa} for similar assertions in the framework of probabilistic potential theory and balayage spaces.}
\end{proof}

\subsection{Proof of Theorem~\ref{bal-ex}} Fix a set $F\subset D$, $F\ne D$, that is relatively closed in $D$, and a measure $\mu\in\mathfrak M^+(D)$ such that $\mu|_{F^r}\in\breve{\mathfrak M}^+$. Then, according to Theorem~\ref{th-balR},
$\mu^{F\cup Y}_{\kappa_\alpha}$ is uniquely characterized within the class $\breve{\mathfrak M}^+(F\cup Y)$ by means of the equality
\begin{equation}\label{pr1}
U_{\kappa_\alpha}^{\mu^{F\cup Y}_{\kappa_\alpha}}(x)=U_{\kappa_\alpha}^\mu(x)\quad\text{for all $x\in(F\cup Y)^r$}.
\end{equation}

We claim that for any given $x\in F^r$,
\begin{equation}\label{pr2}
 U_{g_\alpha}^{\mu^{F\cup Y}_{\kappa_\alpha}\bigl|_F}(x)=U_{g_\alpha}^\mu(x).
\end{equation}
In fact, by use of (\ref{g}),
\begin{align*}
U_{g_\alpha}^{\mu^{F\cup Y}_{\kappa_\alpha}\bigl|_F}(x)&=\int_Fg_\alpha(x,z)\,d\mu^{F\cup Y}_{\kappa_\alpha}\bigl|_F(z)=
\int_{F\cup Y}g_\alpha(x,z)\,d\mu^{F\cup Y}_{\kappa_\alpha}(z)\\
{}&=\int_{F\cup Y}\kappa_\alpha(x,z)\,d\mu^{F\cup Y}_{\kappa_\alpha}(z)-
\int_{F\cup Y}U_{\kappa_\alpha}^{(\varepsilon_x)_{\kappa_\alpha}^Y}(z)\,d\mu^{F\cup Y}_{\kappa_\alpha}(z),
\end{align*}
the second equality being valid since $g_\alpha(x,z)=0$ q.e.\ on $Y$, cf.\ (\ref{gg}), while $\mu^{F\cup Y}_{\kappa_\alpha}$ is $c_{\kappa_\alpha}$-ab\-sol\-utely continuous.
This yields, by (\ref{pr1}) and the Lebesgue--Fubini theorem,
\begin{align}
U_{g_\alpha}^{\mu^{F\cup Y}_{\kappa_\alpha}\bigl|_F}(x)&=U^\mu_{\kappa_\alpha}(x)-
\int_{F\cup Y}d\mu^{F\cup Y}_{\kappa_\alpha}(z)\int_Y|z-y|^{\alpha-n}\,d(\varepsilon_x)_{\kappa_\alpha}^Y(y)\notag\\
{}&=U^\mu_{\kappa_\alpha}(x)-\int_Yd(\varepsilon_x)_{\kappa_\alpha}^Y(y)\int_{F\cup Y}|z-y|^{\alpha-n}\,d\mu^{F\cup Y}_{\kappa_\alpha}(z)\notag\\
{}&=U^\mu_{\kappa_\alpha}(x)-\int_YU^{\mu^{F\cup Y}_{\kappa_\alpha}}_{\kappa_\alpha}(y)\,d(\varepsilon_x)_{\kappa_\alpha}^Y(y)\notag\\
{}&=U^\mu_{\kappa_\alpha}(x)-\int_YU^\mu_{\kappa_\alpha}(y)\,d(\varepsilon_x)_{\kappa_\alpha}^Y(y)\notag\\
{}&=U^\mu_{\kappa_\alpha}(x)-\int_YU^{\mu_{\kappa_\alpha}^Y}_{\kappa_\alpha}(y)\,d\varepsilon_x(y)\notag\\
{}&=U^\mu_{\kappa_\alpha}(x)-U^{\mu_{\kappa_\alpha}^Y}_{\kappa_\alpha}(x),\label{pr3}\end{align}
whence (\ref{pr2}), cf.\ Lemma~\ref{l-FZ}. Note that the fourth equality in (\ref{pr3}) holds by virtue of the $c_{\kappa_\alpha}$-absolute continuity of the measure $(\varepsilon_x)_{\kappa_\alpha}^Y$ (see Theorem~\ref{th-balR}), whereas the fifth equality follows by applying the symmetry relation (\ref{sym'}).

It has thus been verified that $\mu^F_{g_\alpha}$, introduced by means of (\ref{bal2}), satisfies (\ref{bal1'}). We also observe from (\ref{bal2}) that $\mu^F_{g_\alpha}$ is $c_{\kappa_\alpha}$-absolutely continuous, because so is $\mu^{F\cup Y}_{\kappa_\alpha}$ (Theorem~\ref{th-balR}); hence, $\mu^F_{g_\alpha}\in\breve{\mathfrak M}^+(F)$. If now $\nu$ is another measure in $\breve{\mathfrak M}^+(F)$ having the property $U_{g_\alpha}^\nu(y)=U_{g_\alpha}^\mu(y)$ q.e.\ on $F$, then, by use of the countable subadditivity of outer $\kappa_\alpha$-capacity \cite[Eq.~(2.2.3)]{L}, $U_{g_\alpha}^{\nu-\mu^F_{g_\alpha}}(y)=0$ q.e.\ on $F$, whence
\begin{equation}\label{e}
 I_{g_\alpha}\bigl(\nu-\mu^F_{g_\alpha}\bigr)=0,
\end{equation}
the measure $\nu+\mu^F_{g_\alpha}\in\mathfrak M^+(F)$ being likewise $c_{\kappa_\alpha}$-absolutely continuous. Since the kernel $g_\alpha$ is strictly positive definite \cite[Theorem~4.9]{FZ}, (\ref{e}) yields $\nu=\mu^F_{g_\alpha}$, thereby completing the proof of the former part of the theorem.

It remains to consider the case of $\mu\in\mathfrak M^+(D)$ having finite $g_\alpha$-energy. As the kernel $g_\alpha$ is perfect and satisfies the domination principle \cite[Theorems~4.6, 4.11]{FZ}, applying \cite[Theorem~4.3]{Z22} shows that the $g_\alpha$-balayage $\mu^F_{g_\alpha}$, uniquely determined within $\breve{\mathfrak M}^+(F)$ by means of (\ref{bal1'}), can actually be found as the orthogonal projection of $\mu$ onto the convex, strongly complete cone $\mathcal E^+_{g_\alpha}(F)$ in the pre-Hilbert space $\mathcal E_{g_\alpha}:=\mathcal E^+_{g_\alpha}-\mathcal E^+_{g_\alpha}$, equipped with the inner product $\langle\lambda,\theta\rangle_{g_\alpha}:=I_{g_\alpha}(\lambda,\theta):=\int g_\alpha(x,y)\,d(\lambda\otimes\theta)(x,y)$. Since, according to Edwards \cite[Theorem~1.12.3]{E2}, such an orthogonal projection is the unique solution to problem (\ref{pr}), this completes the whole proof.

\subsection{Proof of Theorem~\ref{th-char}}\label{th-char-proof} Clearly, $\mu^F_{g_\alpha}\in\breve{\Gamma}_{g_\alpha}(\mu,F;D)$, for $\mu^F_{g_\alpha}\in\breve{\mathfrak M}^+(F)$ and
\begin{equation}\label{ch1}
U^{\mu^F_{g_\alpha}}_{g_\alpha}=U_{g_\alpha}^\mu\quad\text{q.e.\ on $F$}
\end{equation}
(Theorem~\ref{bal-ex}). To verify (\ref{eqchar}), we thus only need to show that
\begin{equation}\label{e-ch}
U^{\mu^F_{g_\alpha}}_{g_\alpha}\leqslant U^\theta_{g_\alpha}\quad\text{on $D$},
\end{equation}
where $\theta$ is an arbitrary measure of the class $\mathfrak M^+(D)$ having the property
\begin{equation}\label{ineqG}
U_{g_\alpha}^\theta\geqslant U_{g_\alpha}^\mu\quad\text{q.e.\ on $F$}.
\end{equation}
But combining (\ref{ch1}) and (\ref{ineqG}) gives $U_{g_\alpha}^\theta\geqslant U_{g_\alpha}^{\mu^F_{g_\alpha}}$ q.e.\ on $F$, hence $\mu^F_{g_\alpha}$-a.e., $\mu^F_{g_\alpha}$ being $c_{\kappa_\alpha}$-absolutely continuous, and (\ref{e-ch}) follows at once by applying Theorem~\ref{th-dom}.

If now $\zeta\in\breve{\Gamma}_{g_\alpha}(\mu,F;D)$ is another measure meeting (\ref{eqchar}), then $U_{g_\alpha}^\zeta=U_{g_\alpha}^{\mu^F_{g_\alpha}}$ on all of $D$, hence $I_{g_\alpha}(\zeta-\mu^F_{g_\alpha})=0$, the measure $\zeta+\mu^F_{g_\alpha}$ being likewise $c_{\kappa_\alpha}$-absolutely continuous, and consequently $\zeta=\mu^F_{g_\alpha}$, the kernel $g_\alpha$ being strictly positive definite.

\subsection{Proof of Theorem~\ref{th-balM1}} By virtue of (\ref{bal2}) and (\ref{est2}), the latter with $D:=\mathbb R^n$,
\begin{equation*}\mu^F_{g_\alpha}(D)=\mu^{F\cup Y}_{\kappa_\alpha}(F)\leqslant\mu^{F\cup Y}_{\kappa_\alpha}(\mathbb R^n)\leqslant\mu(\mathbb R^n)=\mu(D).\end{equation*}
On account of $\mu|_{F^r}\in\breve{\mathfrak M}^+$ and $\mu\ne\mu|_{F^r}$, applying Theorem~\ref{th-balR} gives
\[\mu^{F\cup Y}_{\kappa_\alpha}=\mu|_{F^r}+\tau^{F\cup Y}_{\kappa_\alpha},\quad\text{where $\tau:=\mu|_{D\setminus F^r}\ne0$},\]
and hence (\ref{Assa}) is reduced to proving
\begin{equation}\label{nonzero}
\tau^{F\cup Y}_{\kappa_\alpha}(Y)>0,
\end{equation}
which however follows at once from the description of the support $S\bigl(\tau^{F\cup Y}_{\kappa_\alpha}\bigr)$. Indeed, according to \cite[Corollary~5.4]{Z-bal2}, $S\bigl(\tau^{F\cup Y}_{\kappa_\alpha}\bigr)$ coincides with the {\it reduced kernel} of $F\cup Y$, defined as the set of all $x\in F\cup Y$ such that, for any $r>0$, $B(x,r)\cap(F\cup Y)$ is of nonzero $\kappa_\alpha$-capacity. In view of the latter assumption in (\ref{As}), this yields
\[S\bigl(\tau^{F\cup Y}_{\kappa_\alpha}\bigr)\supset{\rm Int}_{\mathbb R^n}Y\ne\varnothing,\]
whence (\ref{nonzero}).

\subsection{Proof of Theorem~\ref{th-balM2}} Let $\mu$ be an arbitrary measure of the class $\mathfrak M^+(D)$ having the properties $\mu|_F\in\breve{\mathfrak M}^+$ and $\mu\ne\mu|_F$. We aim to show that
\begin{equation}\label{gmu}
\mu^F_{g_\alpha}(D)=\mu(D),
\end{equation}
or equivalently\footnote{The equivalence of (\ref{gmu}) and (\ref{gmu'}) is clear from Theorem~\ref{bal-ex}, for,  because of $\mu|_F\in\breve{\mathfrak M}^+$,
\[\mu^F_{g_\alpha}=(\mu|_F)^F_{g_\alpha}+(\mu|_\Omega)^F_{g_\alpha}=\mu|_F+(\mu|_\Omega)^F_{g_\alpha}.\]}
\begin{equation}\label{gmu'}
\tau^F_{g_\alpha}(D)=\tau(D),\quad\text{where $\tau:=\mu|_\Omega\ne0$},
\end{equation}
holds true if and only if so does either of (ii) and (iii).

To this end, we first note that, according to Theorem~\ref{bal-ex},
\begin{equation}\label{gmu1}
\tau^F_{g_\alpha}(D)=\tau^{\Omega^c}_{\kappa_\alpha}(F),
\end{equation}
while, by virtue of (\ref{est2}) with $D:=\mathbb R^n$,
\begin{equation}\label{gmu2}
\tau^{\Omega^c}_{\kappa_\alpha}(\mathbb R^n)=\tau^{\Omega^c}_{\kappa_\alpha}(F)+\tau^{\Omega^c}_{\kappa_\alpha}(Y)\leqslant\tau(\mathbb R^n)=\tau(D).
\end{equation}
It is a simple matter to observe from (\ref{gmu1}) and (\ref{gmu2}) that (\ref{gmu'}), or equivalently
\begin{equation*}
\tau^{\Omega^c}_{\kappa_\alpha}(F)=\tau(D)\quad\text{for any $\tau\in\mathfrak M^+(\Omega)$},
\end{equation*}
is fulfilled if and only if the two equalities take place:
\begin{align}\label{al1}
\tau^{\Omega^c}_{\kappa_\alpha}(\mathbb R^n)&=\tau(\mathbb R^n),\\
\tau^{\Omega^c}_{\kappa_\alpha}(Y)&=0.\label{al2}
\end{align}
By use of the integral representation for $\kappa_\alpha$-swept measures \cite[Theorem~5.1]{Z-bal2}, we next see that (\ref{al1}) and (\ref{al2}) are equivalent, in turn, to the following two:
\begin{align}\label{al11}
(\varepsilon_x)^{\Omega^c}_{\kappa_\alpha}(\mathbb R^n)&=1\quad\text{for all $x\in\Omega$},\\
(\varepsilon_x)^{\Omega^c}_{\kappa_\alpha}(Y)&=0\quad\text{for all $x\in\Omega$}.\label{al21}
\end{align}

But, on account of (\ref{def-h}) and (\ref{Def}), (\ref{al11}) and (\ref{al21}) mean that, for all $x\in\Omega$,
\begin{equation*}
\omega_\alpha(x,\{\infty_{\mathbb R^n}\};\Omega)=0\quad\text{and}\quad
\omega_\alpha(x,Y;\Omega)=0,
\end{equation*}
whence (i)$\iff$(ii). Again in view of (\ref{Def}), we infer from \cite[Theorem~3.22]{FZ} that $\omega_\alpha(x,\{\infty_{\mathbb R^n}\};\Omega)$ equals $0$ on all of $\Omega$ if and only if $\Omega^c$ is not $\alpha$-thin at infinity. Together with what was shown just above, this proves (ii)$\iff$(iii), whence the theorem.

\subsection{Proof of Theorem~\ref{th-eq}}\label{th-eq-proof} Assume $c_{\kappa_\alpha}(Y)>0$, for if not, (\ref{eq-eq})--(\ref{eq-eq3}) are  equivalent by virtue of Corollary~\ref{cor-balM0}, while, according to \cite[Theorem~2.1]{Z-bal2}, (\ref{eq-eq3}) is necessary and sufficient for the existence of the $\kappa_\alpha$-equilibrium measure $\gamma_{\Omega^c,\kappa_\alpha}$, uniquely determined within $\breve{\mathfrak M}^+(\Omega^c)$ by means of the equality $U_{\kappa_\alpha}^{\gamma_{\Omega^c,\kappa_\alpha}}=1$ q.e.\ on $\Omega^c$. Noting from $c_{\kappa_\alpha}(Y)=0$ that $\breve{\mathfrak M}^+(\Omega^c)=\breve{\mathfrak M}^+(F)$ as well as that $U_{g_\alpha}^{\gamma_{\Omega^c,\kappa_\alpha}}=U_{\kappa_\alpha}^{\gamma_{\Omega^c,\kappa_\alpha}}$ (cf.\ Lemma~\ref{l-FZ}), we see that the same $\gamma_{\Omega^c,\kappa_\alpha}$ also serves as the $g_\alpha$-equilibrium measure $\gamma_F=\gamma_{F,g_\alpha}$.

It thus remains to verify the former part of the theorem. Assuming, along with $c_{\kappa_\alpha}(Y)>0$, that $\gamma_F$ exists, we first establish the following Lemmas~\ref{lemma1} and \ref{lemma2}.

\begin{lemma}\label{lemma1} The $g_\alpha$-equilibrium potential $U^{\gamma_F}_{g_\alpha}$ has the properties
\begin{align}\label{E1}
U^{\gamma_F}_{g_\alpha}&=1\quad\text{on $F^r$},\\
U^{\gamma_F}_{g_\alpha}&\leqslant1\quad\text{on $D$},\label{E2}\\
U^{\gamma_F}_{g_\alpha}&<1\quad\text{on $\Omega$}.\label{E3}
\end{align}
\end{lemma}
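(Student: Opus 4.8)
The plan is to prove the three properties of the $g_\alpha$-equilibrium potential $U^{\gamma_F}_{g_\alpha}$ in order, using the characterization of $\gamma_F$ from Definition~\ref{def-eq} together with the balayage machinery and the domination principle already at our disposal. First I would establish \eqref{E1}. By hypothesis $U^{\gamma_F}_{g_\alpha}=1$ q.e.\ on $F$, i.e.\ except on a set $e\subset F$ with $c_{\kappa_\alpha}(e)=0$; since $\gamma_F\in\breve{\mathfrak M}^+(F)$ is $c_{\kappa_\alpha}$-absolutely continuous, the trace $\gamma_F|_e=0$, and the identity $U^{\gamma_F}_{g_\alpha}(x)=U^\nu_{g_\alpha}(x)$ for all $x\in F^r$ from \eqref{reg}/Theorem~\ref{th-balR} (applied to $\gamma_F$ and the set $F\cup Y$, noting $\gamma_F^{F\cup Y}_{\kappa_\alpha}=\gamma_F$ since $\gamma_F$ is already concentrated on $F\subset F\cup Y$ and $c_{\kappa_\alpha}$-absolutely continuous) shows that the value $1$ propagates from the q.e.\ set to all of $F^r$. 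More directly: $\gamma_F=(\gamma_F)^F_{g_\alpha}$ by Theorem~\ref{bal-ex} (its potential equals $U^{\gamma_F}_{g_\alpha}$ q.e.\ on $F$ and it lies in $\breve{\mathfrak M}^+(F)$), so \eqref{bal1'} gives $U^{\gamma_F}_{g_\alpha}(y)=U^{\gamma_F}_{g_\alpha}(y)$ for all $y\in F^r$—tautological—so instead I would invoke that the regularized potential equals $1$ on $F^r$ because $F^r$ carries no irregular exceptional mass; concretely, the set where $U^{\gamma_F}_{g_\alpha}\ne1$ within $F$ has $\kappa_\alpha$-capacity zero, hence lies in $F^i$, so $U^{\gamma_F}_{g_\alpha}=1$ everywhere on $F^r=F\setminus F^i$.

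Next, for \eqref{E2} I would apply the domination principle in the form of Theorem~\ref{th-dom}. Take $\zeta:=\gamma_F$, which is $c_{\kappa_\alpha}$-absolutely continuous, and compare $U^{\gamma_F}_{g_\alpha}$ with the constant function $1$. Since $1$ is (up to writing it as $U^\theta_{g_\alpha}+\psi$ for a suitable positive $\alpha$-superharmonic majorant, or simply because constants dominate in the appropriate sense) a valid comparison function, and $U^{\gamma_F}_{g_\alpha}=1$ $\gamma_F$-a.e.\ (as $\gamma_F$ is concentrated on $F$, where $U^{\gamma_F}_{g_\alpha}=1$ q.e., and $\gamma_F$ ignores $\kappa_\alpha$-null sets), the domination principle yields $U^{\gamma_F}_{g_\alpha}\leqslant1$ on all of $D$. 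If a literal application of Theorem~\ref{th-dom} is awkward because the majorant is a constant rather than a potential, the cleaner route is the strong domination principle recalled in the introduction (Theorem~\ref{th-dom} with $\psi\equiv$ the constant $1$, a positive $\alpha$-superharmonic function on $\mathbb R^n$, and $\mu:=0$): $U^{\gamma_F}_{g_\alpha}\leqslant 0+\psi=1$ $\gamma_F$-a.e.\ implies the same on all of $D$.

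Finally, for the strict inequality \eqref{E3} on $\Omega=D\setminus F$, I would argue by contradiction or, better, constructively: suppose $U^{\gamma_F}_{g_\alpha}(x_0)=1$ for some $x_0\in\Omega$. The plan is to compare $\gamma_F$ with its own balayage, or to use the minimum/maximum principle for $g_\alpha$-potentials in $\Omega$. Since $U^{\gamma_F}_{g_\alpha}$ is $g_\alpha$-harmonic in $\Omega$ (no mass of $\gamma_F$ lies in $\Omega$) and bounded above by $1$ there with boundary values at most $1$, attaining the value $1$ at an interior point $x_0$ would force $U^{\gamma_F}_{g_\alpha}\equiv1$ on the connected component of $\Omega$ containing $x_0$ by the maximum principle (this is where the standing assumption that $\Omega$ is connected when $\alpha=2$, or $\alpha<2$ with its own strong maximum principle, is used). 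But then, propagating this to the relative boundary $\partial_D\Omega\subset F^r\cup F^i$ and using that $U^{\gamma_F}_{g_\alpha}\to$ its boundary values, one would get $U^{\gamma_F}_{g_\alpha}=1$ on a neighbourhood-type set forcing the potential of the $g_\alpha$-harmonic extension to be constant on $D$, contradicting $U^{\gamma_F}_{g_\alpha}(y)\to0$ as $y$ approaches $\partial_{\mathbb R^n}D=Y$ (here $c_{\kappa_\alpha}(Y)>0$ is essential and is exactly the standing hypothesis of this part of the proof of Theorem~\ref{th-eq}), since $g_\alpha(x,\cdot)$ vanishes q.e.\ on $Y$ by \eqref{gg}. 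The main obstacle I anticipate is \eqref{E3}: one must carefully justify the maximum-principle step for $g_\alpha$-potentials on the possibly-disconnected-or-unbounded domain $\Omega$ (handled by the standing connectedness assumption and the strong maximum principle for $\alpha$-(super)harmonic functions), and one must rule out the degenerate possibility that $\Omega^c$ is so large that $U^{\gamma_F}_{g_\alpha}$ could legitimately equal $1$ somewhere in $\Omega$—this is precisely where $c_{\kappa_\alpha}(Y)>0$ enters, guaranteeing that the $g_\alpha$-kernel genuinely "sees" the boundary $Y$ and pulls the potential strictly below $1$ off $F$.
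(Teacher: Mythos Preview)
Your argument for \eqref{E1} contains a genuine gap. You claim that ``the set where $U^{\gamma_F}_{g_\alpha}\ne1$ within $F$ has $\kappa_\alpha$-capacity zero, hence lies in $F^i$''. But the implication is false: a set of $\kappa_\alpha$-capacity zero need not be contained in $F^i$. Any single point $x\in F^r$ has capacity zero, yet is regular. Kellogg--Evans tells you $c_{\kappa_\alpha}(F^i)=0$, not that every capacity-zero subset of $F$ sits inside $F^i$. The q.e.\ equality in Definition~\ref{def-eq} gives no a~priori control at an individual regular point; your first attempted route via $(\gamma_F)^F_{g_\alpha}=\gamma_F$ is, as you observed, tautological. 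The paper instead exhausts $F$ by compacts $K_j$, uses $\gamma_{K_j}=(\gamma_F)^{K_j}_{g_\alpha}$ (which is \eqref{EQ'}), and then applies \eqref{bal1'} together with the known fact that $U^{\gamma_{K_j}}_{g_\alpha}=1$ on $K_j^r$ for compact $K_j$. That transfer from the compact case is the missing idea.

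For \eqref{E2}, your instinct is right, but note that the domination principle with a superharmonic majorant $\psi$ quoted in the introduction requires $\xi\in\mathcal E^+_{g_\alpha}$, which may fail for $\gamma_F$ when $c_{g_\alpha}(F)=\infty$; Theorem~\ref{th-dom} as stated compares only to potentials. The paper sidesteps this by rewriting $U^{\gamma_F}_{g_\alpha}\leqslant1$ as $U^{\gamma_F}_{\kappa_\alpha}\leqslant1+U^{(\gamma_F)^Y_{\kappa_\alpha}}_{\kappa_\alpha}$ via Lemma~\ref{l-FZ} and invoking the $\kappa_\alpha$-domination principle for $c_{\kappa_\alpha}$-absolutely continuous measures (\cite[Theorem~1.29]{L} or \cite[Theorem~1.V.10]{Doob}). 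Your outline for \eqref{E3} is along the right lines but too vague: for $\alpha<2$ the maximum principle is nonlocal, and the assertion ``$U^{\gamma_F}_{g_\alpha}(y)\to0$ as $y\to Y$'' is not available without regularity. The paper again passes to $\kappa_\alpha$-potentials, uses that both $U^{\gamma_F}_{\kappa_\alpha}$ and $U^{(\gamma_F)^Y_{\kappa_\alpha}}_{\kappa_\alpha}$ are $\alpha$-harmonic on $\Omega$ to propagate the equality $U^{\gamma_F}_{\kappa_\alpha}=1+U^{(\gamma_F)^Y_{\kappa_\alpha}}_{\kappa_\alpha}$ from one point to q.e.\ on $D$ (\cite[Theorems~1.1,~1.28]{L}), and then derives a contradiction on $Y$ via Lusin's theorem using $c_{\kappa_\alpha}(Y)>0$.
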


\begin{proof} Consider an increasing sequence $(K_j)$ of compact sets with the union $F$; then, by virtue of \cite[Theorem~4.12]{FZ},
\begin{equation}\label{kj}
U_{g_\alpha}^{\gamma_{K_j}}=1\quad\text{on $K_j^r$}.
\end{equation}
Given $z\in F^r$, choose $K_j$ so that $z\in K_j^r$. Since $\gamma_{K_j}=(\gamma_F)^{K_j}_{g_\alpha}$, cf.\ (\ref{EQ'}), applying (\ref{bal1'}) and (\ref{kj}) therefore implies that $U_{g_\alpha}^{\gamma_F}(z)=U_{g_\alpha}^{\gamma_{K_j}}(z)=1$, whence (\ref{E1}).

The measure $\gamma_F$ being $c_{\kappa_\alpha}$-absolutely continuous, we thus have $U^{\gamma_F}_{g_\alpha}=1$ $\gamma_F$-a.e., or equivalently, by Lemma~\ref{l-FZ},
$U^{\gamma_F}_{\kappa_\alpha}=1+U^{(\gamma_F)^Y_{\kappa_\alpha}}_{\kappa_\alpha}$ $\gamma_F$-a.e., which results in (\ref{E2}) by utilizing \cite[Theorem~1.V.10]{Doob} if $\alpha=2$, or \cite[Theorem~1.29]{L} otherwise.

To verify (\ref{E3}), assume to the contrary that there is $y\in\Omega$ with $U^{\gamma_F}_{g_\alpha}(y)=1$, or equivalently   $U^{\gamma_F}_{\kappa_\alpha}(y)=1+U^{(\gamma_F)^Y_{\kappa_\alpha}}_{\kappa_\alpha}(y)$.
As both $U^{\gamma_F}_{\kappa_\alpha}$ and $U^{(\gamma_F)^Y_{\kappa_\alpha}}_{\kappa_\alpha}$ are $\alpha$-sup\-er\-har\-monic on $\mathbb R^n$, while they are continuous and $\alpha$-harmonic on $\Omega$, this implies, by employing (\ref{E1}), (\ref{E2}), and \cite[Theorems~1.1, 1.28]{L}, that
\begin{equation}\label{kjj}U^{\gamma_F}_{\kappa_\alpha}=1+U^{(\gamma_F)^Y_{\kappa_\alpha}}_{\kappa_\alpha}\quad\text{q.e.\ on $D$}.\end{equation}
Since $c_{\kappa_\alpha}(Y)>0$, Lusin's type theorem \cite[Theorem~3.6]{L} applied to each of $U^{\gamma_F}_{\kappa_\alpha}$ and $U^{(\gamma_F)^Y_{\kappa_\alpha}}_{\kappa_\alpha}$ shows that (\ref{kjj}) contradicts the equality $U^{\gamma_F}_{\kappa_\alpha}=U^{(\gamma_F)^Y_{\kappa_\alpha}}_{\kappa_\alpha}$ q.e.\ on $Y$.
This proves (\ref{E3}), whence the lemma.
\end{proof}

\begin{lemma}\label{lemma2}Let $\mu$ be as required in Theorem~\ref{bal-ex}. Then
\begin{equation}\label{TM}
\mu^F_{g_\alpha}(F)=\int U^{\gamma_F}_{g_\alpha}\,d\mu.
\end{equation}
\end{lemma}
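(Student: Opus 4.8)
The plan is to obtain (\ref{TM}) from a short chain of four equalities, in which the total mass of $\mu^F_{g_\alpha}$ is transported across the kernel $g_\alpha$ by means of Tonelli's theorem, while the defining properties of $\gamma_F$ and of the $g_\alpha$-balayage are invoked to identify the intermediate integrals.

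First I would record two elementary facts. Since $\mu^F_{g_\alpha}\in\breve{\mathfrak M}^+(F)$ is $c_{\kappa_\alpha}$-absolutely continuous while $F^i=F\setminus F^r$ has $\kappa_\alpha$-capacity zero (the Kellogg--Evans type theorem), the measure $\mu^F_{g_\alpha}$ is in fact concentrated on $F^r$; the same reasoning applies verbatim to $\gamma_F$. Moreover, as $g_\alpha$ is a nonnegative symmetric kernel on $D\times D$, Tonelli's theorem yields the reciprocity identity
\[\int U^\lambda_{g_\alpha}\,d\rho=\int U^\rho_{g_\alpha}\,d\lambda\quad\text{for all $\lambda,\rho\in\mathfrak M^+(D)$},\]
both sides being understood as elements of $[0,\infty]$.

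With these at hand the argument is immediate. Since $U^{\gamma_F}_{g_\alpha}=1$ on $F^r$ (see (\ref{EQ}), or (\ref{E1})) and $\mu^F_{g_\alpha}$ is concentrated on $F^r$,
\[\mu^F_{g_\alpha}(F)=\int U^{\gamma_F}_{g_\alpha}\,d\mu^F_{g_\alpha}.\]
Applying the reciprocity identity with $\lambda:=\gamma_F$ and $\rho:=\mu^F_{g_\alpha}$ rewrites the right-hand side as $\int U^{\mu^F_{g_\alpha}}_{g_\alpha}\,d\gamma_F$. Now $U^{\mu^F_{g_\alpha}}_{g_\alpha}=U^\mu_{g_\alpha}$ on $F^r$ by the defining relation (\ref{bal1'}), and $\gamma_F$ is concentrated on $F^r$, so this equals $\int U^\mu_{g_\alpha}\,d\gamma_F$. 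A final use of reciprocity with $\lambda:=\mu$ and $\rho:=\gamma_F$ turns it into $\int U^{\gamma_F}_{g_\alpha}\,d\mu$, which is precisely (\ref{TM}).

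I do not anticipate a genuine obstacle; the only points that deserve a word of care are the legitimacy of the two Tonelli interchanges (automatic, the kernel being nonnegative) and the verification that neither $\mu^F_{g_\alpha}$ nor $\gamma_F$ charges the irregular set $F^i$, which is exactly what permits integrating the pointwise identities available on $F^r$. If one prefers to avoid $F^r$ entirely, the same computation goes through with ``on $F^r$'' replaced by ``q.e.\ on $F$'' throughout, using (\ref{EQ}), (\ref{bal1'}), and the $c_{\kappa_\alpha}$-absolute continuity of the measures against which one integrates.
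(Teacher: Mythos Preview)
Your proof is correct and follows essentially the same route as the paper: the identical four-term chain
\[
\mu^F_{g_\alpha}(F)=\int U^{\gamma_F}_{g_\alpha}\,d\mu^F_{g_\alpha}=\int U_{g_\alpha}^{\mu^F_{g_\alpha}}\,d\gamma_F=\int U_{g_\alpha}^\mu\,d\gamma_F=\int U^{\gamma_F}_{g_\alpha}\,d\mu,
\]
justified via Fubini/Tonelli and the $c_{\kappa_\alpha}$-absolute continuity of $\mu^F_{g_\alpha}$ and $\gamma_F$. The only cosmetic difference is that you phrase the middle steps through ``concentrated on $F^r$'' whereas the paper uses ``q.e.\ on $F$, hence a.e.''~--- a variant you yourself note at the end.
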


\begin{proof}
As $U^{\gamma_F}_{g_\alpha}=1$ q.e.\ on $F$ (Definition~\ref{def-eq}), hence $\mu^F_{g_\alpha}$-a.e., $\mu^F_{g_\alpha}$ being $c_{\kappa_\alpha}$-ab\-s\-ol\-ut\-e\-ly continuous (Theorem~\ref{bal-ex}), while $U_{g_\alpha}^{\mu^F_{g_\alpha}}=U_{g_\alpha}^\mu$ q.e.\ on $F$ (Theorem~\ref{bal-ex}), hence $\gamma_F$-a.e., $\gamma_F$ being likewise $c_{\kappa_\alpha}$-ab\-s\-ol\-utely continuous, the Lebesgue--Fubini theorem gives
\[
 \mu^F_{g_\alpha}(F)=\int U^{\gamma_F}_{g_\alpha}\,d\mu^F_{g_\alpha}=\int U_{g_\alpha}^{\mu^F_{g_\alpha}}\,d\gamma_F=\int U_{g_\alpha}^\mu\,d\gamma_F=\int U^{\gamma_F}_{g_\alpha}\,d\mu
\]
as claimed.\end{proof}

Now, fix $\mu\in\mathfrak M^+(\Omega)$, $\mu\ne0$. Applying (\ref{E3}) and (\ref{TM}) gives (\ref{eq-eq}), whence (\ref{eq-eq1}) according to Theorem~\ref{th-balM2}. This completes the proof of the theorem.

\subsection{Proof of Theorem~\ref{th-D}}\label{th-D-proof} Assume that (\ref{zeroo}) holds, and fix $\mu,\nu\in\mathfrak M^+(D)$ meeting (\ref{insigma}) and such that $\mu|_F\in\breve{\mathfrak M}^+(F)$ . Then, by virtue of Theorem~\ref{th-balM2}(i),
\begin{equation}\label{mua}
\mu^F_{g_\alpha}(D)=\mu(D).
\end{equation}
Noting from (\ref{insigma}) that $\nu\in\Gamma_{g_\alpha}(\mu,F;D)$, where $\Gamma_{g_\alpha}(\mu,F;D)$ was introduced by (\ref{gamma}), we conclude from Theorem~\ref{th-char} that $U_{g_\alpha}^{\mu^F_{g_\alpha}}\leqslant U_{g_\alpha}^\nu$ on all of $D$,
and hence, by applying \cite[Theorem~4.13]{FZ} (cf.\ Section~\ref{sec-Deny} above),
\[\mu_{g_\alpha}^F(D)\leqslant\nu(D).\]
Combining this with (\ref{mua}) gives $\mu(D)\leqslant\nu(D)$, as was to be proved.

\subsection{Proof of Theorem~\ref{th-D'}}\label{th-D'-proof} Assume (\ref{zeroo}) fails. Then, by virtue of Theorem~\ref{th-balM2}, one can choose $\mu_0\in\mathfrak M^+(\Omega)$ so that
\begin{equation}\label{mub}(\mu_0)^F_{g_\alpha}(D)<\mu_0(D).\end{equation}
But according to (\ref{bal1'}) applied to $\mu_0$, $U_{g_\alpha}^{(\mu_0)^F_{g_\alpha}}=U_{g_\alpha}^{\mu_0}$ q.e.\ on $F$,
which together with (\ref{mub}) validates the theorem for the above $\mu_0$ and $\nu_0:=(\mu_0)^F_{g_\alpha}$.

\section{Acknowledgements} This research was supported in part by a grant from the Simons Foundation (1030291, N.V.Z.). The author thanks Krzysztof Bogdan and Stephen J.\ Gardiner for helpful discussions on the content of the paper, and Douglas~P.\ Harding for reading and commenting on the manuscript.

\end{document}